\newcommand\Hom[3][]{\mathrm{Hom}_{#1}(#2,#3)}
\newcommand\m[1]{\mathrm{mod}(#1)}
\newcommand\sm[1]{\underline{\mathrm{mod}}(#1)}
\newcommand\sgr[2]{\underline{\mathrm{mod}}^{\mathrm{#2}}(#1)}
\newcommand\proj[2]{\mathrm{proj}^{\mathrm{#2}}(#1)}
\newcommand\gr[2]{\mathrm{mod}^{\mathrm{#2}}(#1)}
\newcommand\Db[1]{\mathrm{D}^{\mathrm{b}}(#1)}
\newcommand\per[1]{\mathrm{per}(#1)}
\newcommand\id[1]{\mathrm{id}_{#1}}
\newcommand\coker[1]{\mathrm{coker}(#1)}
\newcommand\T[1]{\mathrm{Triv}(#1)}
\let\ens\mathbb
\newcommand\Z{\ens{Z}}
\tikzstyle{etiquette}=[minimum height=0.8cm, minimum width=1cm]
\newtheoremstyle{definition}
{\topsep}
{\topsep}
{\normalfont}
{0pt}
{\bfseries\fontfamily{bch}\selectfont}
{.}
{ }
{\thmname{#1}\thmnumber{ #2}\textnormal{\thmnote{ (#3)}}}
\newtheoremstyle{notation}{\topsep}{\topsep}{\normalfont}{0pt}{\bfseries\fontfamily{bch}\selectfont}{.}{ }{\thmname{#1}\thmnumber{ #2}\textnormal{\thmnote{ (#3)}}}
\newtheoremstyle{lemma}{\topsep}{\topsep}{\itshape}{0pt}{\bfseries\fontfamily{bch}\selectfont}{.}{ }{\thmname{#1}\thmnumber{ #2}\textnormal{\thmnote{ (#3)}}}
\newtheoremstyle{proposition}{\topsep}{\topsep}{\itshape}{0pt}{\bfseries\fontfamily{bch}\selectfont}{.}{ }{\thmname{#1}\thmnumber{ #2}\textnormal{\thmnote{ (#3)}}}
\newtheoremstyle{corollary}{\topsep}{\topsep}{\itshape}{0pt}{\bfseries\fontfamily{bch}\selectfont}{.}{ }{\thmname{#1}\thmnumber{ #2}\textnormal{\thmnote{ (#3)}}}
\newtheoremstyle{theorem}{\topsep}{\topsep}{\itshape}{0pt}{\bfseries\fontfamily{bch}\selectfont}{.}{ }{\thmname{#1}\thmnumber{ #2}\textnormal{\thmnote{ (#3)}}}
\newtheoremstyle{example}{\topsep}{\topsep}{\normalfont}{0pt}{\bfseries\fontfamily{bch}\selectfont}{.}{ }{\thmname{#1}\thmnumber{ #2}\textnormal{\thmnote{ (#3)}}}
\newtheoremstyle{remark}{\topsep}{\topsep}{\normalfont}{0pt}{\bfseries\fontfamily{bch}\selectfont}{.}{ }{\thmname{#1}\thmnumber{ #2}\textnormal{\thmnote{ (#3)}}}
\theoremstyle{definition}
\newtheorem{defn}{Definition}[section]
\theoremstyle{notation}
\theoremstyle{lemma}
\newtheorem{lem}[defn]{Lemma}
\theoremstyle{proposition}
\newtheorem{prop}[defn]{Proposition}
\newtheorem*{prop*}{Proposition}
\theoremstyle{corollary}
\newtheorem{cor}[defn]{Corollary}
\theoremstyle{theorem}
\newtheorem{thm}[defn]{Theorem}
\newtheorem*{thm*}{Theorem}
\theoremstyle{example}
\newtheorem{ex}[defn]{Example}
\theoremstyle{remark}
\newtheorem{rem}[defn]{Remark}
\titleformat{\section}[block]
{\centering\normalfont\Large\fontfamily{qhv}\selectfont}
{\thesection.}{0.5em}{}
\titleformat{\subsection}[block]
{\flushleft\normalfont\large\itshape\fontfamily{cmss}\selectfont}
{\thesubsection.}{0.5em}{}
\title{\fontfamily{qhv}\selectfont Derived equivalence of algebras induced by their trivial extensions}
\author{\scshape Valentine Soto}
\date{}
\begin{document}
\maketitle

\medskip

\begin{center}
\begin{minipage}{0.75\textwidth}
\textbf{\fontfamily{bch}\selectfont Abstract.} The bounded derived category of a finite dimensional algebra of finite global dimension is equivalent the stable category of $\Z$-graded modules over its trivial extension \cite{Happel}. In particular, given two derived equivalent finite dimensional algebras $\Lambda_{1}$ and $\Lambda_{2}$ of finite global dimension, their trivial extensions are stable equivalent. The converse is not true in general. The goal of this paper is to study cases where derived equivalences between $\Lambda_{1}$ and $\Lambda_{2}$ arise from an equivalence of categories involving their trivial extension. Thanks to a graded version of Happel's theorem, we show that one can construct a $\Z$-grading on the $\Lambda_{i}$ so that such an equivalence involving their trivial extension yield a derived equivalence between the category of $\Z$-graded module over $\Lambda_{i}$. We describe explicitly the tilting object associated to this derived equivalence (in the non-graded and in the graded case) for triangular matrix algebras. Finally, we apply these results to the particular case of gentle algebras. In this context, we study how one can obtain a derived equivalence between $\Lambda_{1}$ and $\Lambda_{2}$ (in the non-graded and the graded case) from graded generalized Kauer moves.
\end{minipage}
\end{center}

\medskip

\tableofcontents

\phantomsection
\section*{Introduction}
\addcontentsline{toc}{section}{Introduction}

\medskip

Derived categories play a crucial role in different fields of mathematics like in algebra or geometry. They turn out to be important in the representation theory of algebras since they encode the homological properties of the algebra. Hence a natural but difficult question arises : determining when two algebras are derived equivalent. This problem was handled by generalizing the classical Morita theory to derived categories where the role of finitely generated projective generators is played by tilting complexes \cite{Rickard}. This has led to the emergence of the tilting theory \cite{AHHK}. This question was even answered in a more general context like for DG categories \cite{Keller} or more recently for the category of graded algebras with different group grading \cite{ZH}. In the recent years, another natural but difficult question has drawn some interest : establishing a complete derived invariant for some classes of algebras closed under derived equivalence. This problem has been solved for gentle algebras \cite{APS}, for Brauer graph algebras \cite{OZ} and for homologically smooth and proper graded gentle algebras, seen as DG algebras with trivial differential \cite{JSW}.

In this paper, we are interested in a result due to Happel which interprets the bounded derived category of a finite dimensional algebra of finite global dimension as the stable category of its repetitive algebra \cite{Happel}. One can understand modules over the repetitive algebra of $A$ as $\Z$-graded modules over the trivial extension of $A$ \cite{Keller2}. In particular, given two derived equivalent finite dimensional algebras of finite global dimension, their trivial extension are stable equivalent. However, we know that the converse is false in general. The goal of this paper is to study some cases where the converse holds. 

In the first section, we focus on cases where a derived equivalence between finite dimensional algebras of finite global dimension can be obtained from an equivalence of categories involving their trivial extension (Corollary \ref{cor:non gradué} and Corollary \ref{cor:non gradué projectif}). We extend these results for graded algebras. The key step in order to do so is to generalize Happel's theorem to the graded case.

\medskip

\begin{thm*}[Theorem \ref{thm:Happel gradué}]
    Let $G$ be an abelian group and $\Lambda$ be a basic $G$-graded finite dimensional algebra of finite global dimension. Then, there is an equivalence of triangulated categories

\smallskip

\[F:\Db{\gr{\Lambda}{G}}\overset{\sim}{\longrightarrow} \sgr{\T{\Lambda}}{\Z\times G}\]

\smallskip

\noindent such that $F\circ \tau[2]\overset{\sim}{\longrightarrow} (1,0)\circ F$ and $F\circ (g)\overset{\sim}{\longrightarrow} (0,g)\circ F$ for all $g\in G$, where $\tau$ denotes the Auslander-Reiten translation in $\Db{\gr{\Lambda}{G}}$.
\end{thm*}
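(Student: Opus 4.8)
The plan is to transcribe Happel's proof of the ungraded statement \cite{Happel} into the $\Z\times G$-graded setting: since $G$ is abelian and $\Lambda$ is finite dimensional, every construction appearing in that proof is ``$G$-equivariant'', so the bulk of the work is to carry the auxiliary $G$-grading along as bookkeeping, while two of the claims become essentially automatic. First I fix the $\Z\times G$-grading on $\T{\Lambda}=\Lambda\oplus D\Lambda$ for which $\Lambda$ sits in $\Z$-degree $0$ with its given $G$-grading and the $k$-dual $D\Lambda$ sits in $\Z$-degree $1$ with $(D\Lambda)_{(1,g)}=D(\Lambda_{-g})$; this is a well defined $\Z\times G$-graded algebra because $\Lambda$ is finite dimensional, it is $\Z\times G$-graded self-injective, and its symmetrizing form yields an isomorphism of $\Z\times G$-graded bimodules between $D(\T{\Lambda})$ and $\T{\Lambda}$ twisted by the $\Z$-shift $(1,0)$. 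Consequently $\gr{\T{\Lambda}}{\Z\times G}$ is a Frobenius exact category, $\sgr{\T{\Lambda}}{\Z\times G}$ is triangulated with suspension the cosyzygy $\Omega^{-1}$, and the grading shifts $(1,0)$ and $(0,g)$ descend to commuting triangulated autoequivalences. I also record that $\gr{\Lambda}{G}$ has finite global dimension equal to $\mathrm{gl.dim}\,\Lambda$ (standard argument via the exact faithful forgetful functor to $\m{\Lambda}$), so that $\Db{\gr{\Lambda}{G}}\simeq\mathrm{K}^{\mathrm{b}}(\proj{\Lambda}{G})$ and this triangulated category is generated by $\gr{\Lambda}{G}$, equivalently by $\proj{\Lambda}{G}$; moreover it is $\mathrm{Hom}$-finite and Krull--Schmidt, as finitely generated graded $\Lambda$-modules are finite dimensional.

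Next I build the functor. The surjection $\T{\Lambda}\twoheadrightarrow\T{\Lambda}/D\Lambda=\Lambda$ of $\Z\times G$-graded algebras gives an exact, fully faithful functor $\iota\colon\gr{\Lambda}{G}\to\gr{\T{\Lambda}}{\Z\times G}$ sending a $G$-graded $\Lambda$-module to the corresponding $\Z\times G$-graded $\T{\Lambda}$-module concentrated in $\Z$-degree $0$. Following Happel, I then define $F$ on $\mathrm{K}^{\mathrm{b}}(\proj{\Lambda}{G})$ by unfolding a bounded complex of graded projective $\Lambda$-modules into a single $\Z\times G$-graded $\T{\Lambda}$-module whose $\Z$-grading records the cohomological degree, checking that null-homotopic complexes go to projective--injective modules, so that $F$ passes to $\Db{\gr{\Lambda}{G}}$ and to the stable quotient; by construction $F$ restricts on $\gr{\Lambda}{G}$ to the composite of $\iota$ with the stabilization, and $F$ is triangulated with a natural isomorphism $F\circ[1]\cong\Omega^{-1}\circ F$ relating the shift and the cosyzygy. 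Because $\iota\circ(g)=(0,g)\circ\iota$ tautologically, this already gives $F\circ(g)\cong(0,g)\circ F$ for every $g\in G$.

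It remains to check that $F$ is an equivalence and to identify $\tau[2]$ with $(1,0)$. Full faithfulness on the generating subcategory $\gr{\Lambda}{G}$ is a computation of stable $\mathrm{Hom}$-spaces through projective resolutions, which are bounded by finiteness of the global dimension; being triangulated and compatible with the suspensions, $F$ is then fully faithful on all of $\Db{\gr{\Lambda}{G}}$. Essential surjectivity is the step where I expect the real work, and the one place where $\mathrm{gl.dim}\,\Lambda<\infty$ is genuinely needed: imitating Happel's d\'evissage one shows that, up to projective--injective summands, every object of $\gr{\T{\Lambda}}{\Z\times G}$ is a finite iterated extension of $\Z$-shifted copies of objects $\iota(M)$, so that the image of $F$---triangulated and stable under the shifts---is all of $\sgr{\T{\Lambda}}{\Z\times G}$; here the $G$-grading introduces no new obstacle, but running the induction on the width of the $\Z$-support while keeping track of both gradings simultaneously is the delicate point of the whole argument. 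Finally, for the Auslander-Reiten translation I invoke uniqueness of Serre functors: $\Db{\gr{\Lambda}{G}}$ has Serre functor $\nu\cong -\otimes^{\mathbf{L}}_{\Lambda}D\Lambda$ with $\tau\cong\nu\circ[-1]$, while $\sgr{\T{\Lambda}}{\Z\times G}$ has Serre functor $(1,0)\circ\Omega$---this is exactly where the bimodule isomorphism $D(\T{\Lambda})\cong\T{\Lambda}(1,0)$ from the first paragraph is used---so an equivalence of $\mathrm{Hom}$-finite Krull--Schmidt triangulated categories must satisfy $F\circ\nu\cong(1,0)\circ\Omega\circ F$, whence $F\circ\tau[2]\cong F\circ\nu\circ[1]\cong(1,0)\circ\Omega\circ\Omega^{-1}\circ F\cong(1,0)\circ F$.
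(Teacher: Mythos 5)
Your proposal is correct and follows the same overall strategy as the paper: both transcribe Happel's construction into the $(\Z\times G)$-graded setting, observing that every step of the ungraded argument is compatible with the auxiliary $G$-grading. There are two local differences worth noting. First, the paper routes the argument through an intermediate category $\mathscr{C}^{G}(\Lambda)$ of sequences $(M_{n},f_{n})$ with $f_{n}:M_{n}\rightarrow\Hom[\Lambda]{D\Lambda}{M_{n+1}}$ (the graded analogue of Happel's model for modules over the repetitive algebra), proves it is equivalent to $\gr{\T{\Lambda}}{\Z\times G}$, and constructs the suspension explicitly as $\underline{S}=\underline{\coker{\mu}}$ for a chosen functorial injective embedding $\mu:\id{}\rightarrow I$; you work directly in $\gr{\T{\Lambda}}{\Z\times G}$ with the cosyzygy $\Omega^{-1}$ as suspension, which is an equivalent but less explicit packaging (the paper's version buys a concrete functor to compute with, yours avoids the bookkeeping of Lemmas 1.8--1.10). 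Second, and more substantively, you obtain the compatibility $F\circ\tau[2]\overset{\sim}{\longrightarrow}(1,0)\circ F$ from uniqueness of Serre functors, using the graded bimodule isomorphism $D(\T{\Lambda})\cong\T{\Lambda}(1,0)$ to identify the Serre functor of $\sgr{\T{\Lambda}}{\Z\times G}$ as $(1,0)\circ\Omega$, whereas the paper (following Happel) reads this off from the explicit construction; your argument is shorter and more conceptual, at the cost of needing the Hom-finite Krull--Schmidt hypotheses and the existence of the Serre functor on $\Db{\gr{\Lambda}{G}}$, all of which do hold here. The remaining steps (full faithfulness on the generating subcategory plus d\'evissage, essential surjectivity by induction on the width of the $\Z$-support) match the paper's deferral to Happel and are stated at an adequate level of detail given that the paper's own proof is itself only a sketch referring to the author's thesis.
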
 

\medskip

Given two finite dimensional algebras $\Lambda_{1}$ and $\Lambda_{2}$ of finite global dimension with isomorphic trivial extension, one can construct under some assumptions a $\Z$-grading on $\Lambda_{1}$ from the natural $\Z$-grading of the trivial extension of $\Lambda_{2}$. Similarly, one can also construct a $\Z$-grading on $\Lambda_{2}$ from the natural $\Z$-grading of the trivial extension of $\Lambda_{1}$. Hence, the graded version of Happel's theorem leads to the following result.

\medskip

\begin{thm*}[Corollary \ref{cor:gradué iso}]
    Let $\Lambda_{1}$ and $\Lambda_{2}$ be two finite dimensional algebra of finite global dimension. In the above setting, there is an equivalence of triangulated categories

    \[\Db{\gr{\Lambda_{1}}{\Z}}\simeq \Db{\gr{\Lambda_{2}}{\Z}}\]
\end{thm*}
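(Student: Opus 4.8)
The plan is to apply the graded version of Happel's theorem, Theorem \ref{thm:Happel gradué}, with $G=\Z$ to $\Lambda_{1}$ and to $\Lambda_{2}$, each equipped with the $\Z$-grading constructed above, and then to identify the two resulting stable categories by means of the isomorphism of trivial extensions. Throughout we may assume $\Lambda_{1}$ and $\Lambda_{2}$ basic, since this changes neither the hypothesis nor the graded derived categories involved.

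First I would make explicit the gradings carried by $T:=\T{\Lambda_{1}}\cong\T{\Lambda_{2}}$. The algebra $T$ comes with two commuting $\Z$-gradings: the grading $\mathrm{t}_{1}$ given by its presentation as the trivial extension of $\Lambda_{1}$ (that is, $\Lambda_{1}$ in degree $0$ and $D\Lambda_{1}$ in degree $1$), and the grading $\mathrm{t}_{2}$ obtained by transporting along the fixed isomorphism the analogous grading coming from $\T{\Lambda_{2}}$. Together they turn $T$ into a $\Z\times\Z$-graded algebra, and the content of the setting recalled above is precisely that, after these identifications, $(T,(\mathrm{t}_{1},\mathrm{t}_{2}))$ is isomorphic, as a $\Z\times\Z$-graded algebra, to the graded trivial extension of $\Lambda_{1}$ equipped with its constructed grading, while symmetrically $(T,(\mathrm{t}_{2},\mathrm{t}_{1}))$ is isomorphic to the graded trivial extension of $\Lambda_{2}$ with its constructed grading. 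Establishing this --- in particular checking that $\mathrm{t}_{2}$ restricts to the constructed grading not only on the subalgebra $\Lambda_{1}\subseteq T$ but also, with the correct dual sign, on the square-zero ideal $D\Lambda_{1}$, and symmetrically for $\mathrm{t}_{1}$ on $D\Lambda_{2}$ --- is the main point, and it is exactly what the assumptions built into the setting are designed to ensure.

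Next, apply Theorem \ref{thm:Happel gradué} with $G=\Z$ to each $\Lambda_{i}$; here one uses that a $\Z$-grading keeps the global dimension finite, so that the constructed grading still satisfies the hypotheses. This yields triangulated equivalences
\[F_{i}\colon\Db{\gr{\Lambda_{i}}{\Z}}\overset{\sim}{\longrightarrow}\sgr{\T{\Lambda_{i}}}{\Z\times\Z},\qquad i=1,2,\]
where the $\Z\times\Z$-grading on $\T{\Lambda_{i}}$ is the one made explicit in the first step. Let $\sigma\colon\Z\times\Z\to\Z\times\Z$ be the exchange of the two coordinates. By the first step, the $\Z\times\Z$-graded algebra underlying the target of $F_{1}$ is isomorphic to the $\sigma$-twist of the one underlying the target of $F_{2}$, because passing from the constructed grading on $\Lambda_{1}$ to that on $\Lambda_{2}$ interchanges the two $\Z$-directions. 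An isomorphism of $\Z\times\Z$-graded algebras, followed by re-indexing the grading group along $\sigma$, induces an exact equivalence of the categories of $\Z\times\Z$-graded modules which preserves projectives, hence a triangulated equivalence $\Phi\colon\sgr{\T{\Lambda_{1}}}{\Z\times\Z}\overset{\sim}{\longrightarrow}\sgr{\T{\Lambda_{2}}}{\Z\times\Z}$ of the associated stable categories. Then $F_{2}^{-1}\circ\Phi\circ F_{1}\colon\Db{\gr{\Lambda_{1}}{\Z}}\overset{\sim}{\longrightarrow}\Db{\gr{\Lambda_{2}}{\Z}}$ is the desired equivalence of triangulated categories.

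The hard part is the bookkeeping of the first step: one must reconcile the two trivial-extension presentations of $T$ into a single $\Z\times\Z$-grading that is simultaneously of the form required by Theorem \ref{thm:Happel gradué} for $\Lambda_{1}$ and for $\Lambda_{2}$, keeping careful track of the dual grading and of the resulting sign twist on the bimodule summands $D\Lambda_{i}$. Everything after that is formal: each $F_{i}$ is a triangulated equivalence by Theorem \ref{thm:Happel gradué}, and $\Phi$ is triangulated because it is induced by an isomorphism of graded self-injective algebras and therefore commutes with syzygies, hence with the suspension functors of the stable categories.
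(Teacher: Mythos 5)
Your overall strategy coincides with the paper's: apply Theorem \ref{thm:Happel gradué} with $G=\Z$ to each $(\Lambda_{i},d_{i})$ and connect the two targets $\sgr{\T{\Lambda_{i}}}{\Z\times\Z}$ by a regrading of the group $\Z\times\Z$. However, the regrading you propose --- the coordinate swap $\sigma$ --- is not the one that works, and the error sits precisely in the step you yourself single out as the main point. The $\Z\times\Z$-grading to which Theorem \ref{thm:Happel gradué} applies has second component given by the graded trivial extension convention $(D\Lambda_{1})_{g}=D((\Lambda_{1})_{-g})$, so for $x\in B_{1}$ the element $x^{*}_{B_{1}}$ has degree $(1,-d_{1}(x))$. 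Your grading $\mathrm{t}_{2}$ (the natural grading of $\T{\Lambda_{2}}$ transported along $\phi$) instead assigns to $x^{*}_{B_{1}}$ the value $1-d_{1}(x)$: for $x\in B_{1}^{0}$ one has $\phi(x^{*}_{B_{1}})=\phi(x)^{*}_{B_{2}}\in D\Lambda_{2}$, so $\mathrm{t}_{2}(x^{*}_{B_{1}})=1$ while $-d_{1}(x)=0$, and for $x\in B_{1}^{1}$ one has $\phi(x^{*}_{B_{1}})\in B_{2}\subset\Lambda_{2}$, so $\mathrm{t}_{2}(x^{*}_{B_{1}})=0$ while $-d_{1}(x)=-1$. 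Hence $(T,(\mathrm{t}_{1},\mathrm{t}_{2}))$ is \emph{not} (degree-preservingly) the graded trivial extension of $(\Lambda_{1},d_{1})$; the two gradings differ by the regrading $(x,y)\mapsto(x,y-x)$. Concretely, with the gradings actually required by Theorem \ref{thm:Happel gradué}, the four types of basis elements of $\T{\Lambda_{1}}$ have degrees $(0,0)$, $(0,1)$, $(1,0)$, $(1,-1)$, while their images in $\T{\Lambda_{2}}$ have degrees $(0,0)$, $(1,-1)$, $(1,0)$, $(0,1)$; the swap sends $(1,-1)$ to $(-1,1)$, which does not even occur as a degree in $\T{\Lambda_{2}}$, so there is no swap-equivariant graded isomorphism and your $\Phi$ does not exist as described.

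The repair is exactly the composite of your swap with the correcting regradings $(x,y)\mapsto(x,y\pm x)$ on the two sides, namely the automorphism $(x,y)\mapsto(x+y,-y)$ of $\Z^{2}$; this is the isomorphism the paper uses, and one checks on the four types of basis elements listed above that it does carry the $\Z^{2}$-grading of $\T{\Lambda_{1}}$ onto that of $\T{\Lambda_{2}}$. With that single change the remainder of your argument (the equivalences $F_{i}$, the induced stable equivalence, and the composite $F_{2}^{-1}\circ\Phi\circ F_{1}$) goes through and agrees with the paper's proof.
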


\medskip

These results in the non-graded and in the graded case have some similarities with the work of Amiot and Oppermann where they are interested in an analogous question in the context of cluster equivalent algebras of global dimension at most 2 \cite[Theorem 5.6 and Theorem 8.7]{AO}. To conclude the first section, we study the case of triangular matrix algebras. We describe explicitly the tilting object corresponding to the equivalence arising from Happel's theorem in the non-graded and in the graded case (Proposition \ref{prop:tilting object for triangulated matrices} and Proposition \ref{prop:tilting object for graded triangular matrices}). In this context, such an equivalence can be obtained by requiring only one of the two algebras to be of finite global dimension. In the non-graded case, we find again the result of Theorem 4.9 of \cite{Ladkani} for basic triangular matrix algebras of finite global dimension. 

In the second section, we apply the results obtained previously to the particular case of gentle algebras. These algebras are interesting in our context since their trivial extensions are known to be Brauer graph algebras with multiplicity identically one \cite{Schroll}. These are finite dimensional algebras that are constructed thanks to a combinatorial data called a Brauer graph. This combinatorial description of the algebra has led to the construction of a combinatorial tool, called the generalized Kauer moves, that yields derived equivalences of Brauer graph algebras from the moves of edges in their corresponding Brauer graphs \cite{Kauer,Soto}. The graded version of this result (Theorem \ref{thm:graded generalized Kauer moves}) has led to the following result.

\medskip

\begin{thm*}
Let $\Lambda_{1}$ and $\Lambda_{2}$ be two gentle algebras of finite global dimension and $(B_{1},d_{1})$, $(B_{2},d_{2})$ be their corresponding Brauer graph algebra equipped with an admissible cut. 

\smallskip

\begin{enumerate}
    \item (Corollary \ref{cor:non graded generalized Kauer moves}) Assume that $(B_{2},d_{2})$ can be obtained from a $\Z$-graded generalized Kauer move of $(B_{1},d_{1})$ up to an equivalence of the $\Z$-gradings. Then, $\Lambda_{1}$ and $\Lambda_{2}$ are derived equivalent.
    \item (Corollary \ref{cor:graded generalized Kauer moves}) Assume that $B_{2}$ can be obtained from a generalized Kauer move of $B_{1}$. Using the $\Z$-graded generalized Kauer moves, one can construct a $\Z$-grading on the algebras $\Lambda_{1}$ and $\Lambda_{2}$ such that there is an equivalence of triangulated categories

    \[\Db{\gr{\Lambda_{1}}{\Z}}\simeq\Db{\gr{\Lambda_{2}}{\Z}}\]
\end{enumerate}
\end{thm*}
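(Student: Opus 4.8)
The plan is to feed both statements into the same machine, whose three ingredients are: the dictionary (valid for a Brauer graph algebra $B$ of multiplicity one) between an admissible cut of $B$ and a $\Z$-grading on $B$ realising it as the trivial extension of a gentle algebra placed in degree $0$, together with the identification of that gentle algebra with the degree-$0$ part \cite{Schroll}; the graded Happel theorem (Theorem \ref{thm:Happel gradué}); and the graded generalized Kauer moves (Theorem \ref{thm:graded generalized Kauer moves}). I will also use repeatedly the following auxiliary fact: a graded derived equivalence between $G$-graded self-injective algebras is given by the derived Hom with a fixed graded tilting complex, hence commutes with every internal degree shift $(g)$, and since it preserves graded perfect complexes it descends to an equivalence of the associated graded stable categories still commuting with the shifts $(g)$.

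\emph{Part (1).} Since $\Lambda_i$ is gentle of finite global dimension, $B_i=\T{\Lambda_i}$, and by the choice of admissible cut the degree-$0$ part of $(B_i,d_i)$ is $\Lambda_i$ while its degree-$1$ part is $D\Lambda_i$. Applying Theorem \ref{thm:Happel gradué} with $G$ trivial then gives triangulated equivalences $\Phi_i\colon\Db{\m{\Lambda_i}}\overset{\sim}{\to}\sgr{B_i}{\Z}$ intertwining $\tau[2]$ with the internal shift $(1)$. A $\Z$-graded generalized Kauer move from $(B_1,d_1)$ to $(B_2,d_2)$ yields, by Theorem \ref{thm:graded generalized Kauer moves}, a triangulated equivalence $\Db{\gr{B_1}{\Z}}\overset{\sim}{\to}\Db{\gr{B_2}{\Z}}$, which by the auxiliary fact commutes with $(1)$ and restricts to an equivalence $\Psi\colon\sgr{B_1}{\Z}\overset{\sim}{\to}\sgr{B_2}{\Z}$ commuting with $(1)$. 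Then $\Phi_2^{-1}\circ\Psi\circ\Phi_1\colon\Db{\m{\Lambda_1}}\overset{\sim}{\to}\Db{\m{\Lambda_2}}$ is a triangulated equivalence, so $\Lambda_1$ and $\Lambda_2$ are derived equivalent. Replacing $(B_2,d_2)$ by an equivalent $\Z$-graded algebra only precomposes $\Psi$ with a graded equivalence, so it does not affect the conclusion.

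\emph{Part (2).} Now only a non-graded generalized Kauer move turning $B_1$ into $B_2$ is given, and since Brauer graph algebras of multiplicity one are exactly the trivial extensions of gentle algebras \cite{Schroll}, $B_2=\T{\Lambda_2}$ with $d_2$ the corresponding cut. To reach a graded conclusion I would enrich $B_1$ to a $\Z\times\Z$-graded algebra whose first coordinate is the cut $d_1$; its degree-$(0,\ast)$ part is then $\Lambda_1$ equipped with some $\Z$-grading $\gamma_1$, and the whole $\Z\times\Z$-grading realises $B_1$ as $\T{(\Lambda_1,\gamma_1)}$ in the sense of Theorem \ref{thm:Happel gradué}. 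Carrying this grading through the move by the $\Z\times\Z$-graded version of Theorem \ref{thm:graded generalized Kauer moves} produces a $\Z\times\Z$-grading on $B_2$ extending an admissible cut, hence of the form $\T{(\Lambda_2,\gamma_2)}$ for a $\Z$-grading $\gamma_2$, together with a $\Z\times\Z$-graded derived equivalence $\Db{\gr{B_1}{\Z\times\Z}}\overset{\sim}{\to}\Db{\gr{B_2}{\Z\times\Z}}$; by the auxiliary fact this descends to a $\Z\times\Z$-graded stable equivalence commuting with $(1,0)$ and $(0,1)$. Applying Theorem \ref{thm:Happel gradué} with $G=\Z$ to $(\Lambda_1,\gamma_1)$ and $(\Lambda_2,\gamma_2)$ identifies $\sgr{B_i}{\Z\times\Z}$ with $\Db{\gr{\Lambda_i}{\Z}}$ compatibly with all the shifts involved, and composing gives the desired equivalence $\Db{\gr{\Lambda_1}{\Z}}\simeq\Db{\gr{\Lambda_2}{\Z}}$.

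\emph{Main obstacle.} The delicate part in both statements is the bookkeeping of internal degree shifts: one must check that the equivalence coming out of the (graded) Kauer move genuinely commutes, up to coherent natural isomorphism, with the internal shifts, and that under the identifications of Theorem \ref{thm:Happel gradué} those shifts correspond to the $(1,0)$ and $(0,g)$ appearing there and not to some twist of them by a power of the Auslander--Reiten translation. In part (2) there is the additional task of choosing the second $\Z$-grading on $B_1$ so that it is at once compatible with the prescribed Kauer move and restricts, on both sides, to an honest $\Z$-grading on the degree-$0$ gentle algebra with first coordinate the given admissible cut, as well as of identifying the gentle algebra produced by the move with the prescribed $\Lambda_2$; this is exactly where the explicit combinatorial description behind Theorem \ref{thm:graded generalized Kauer moves} is needed.
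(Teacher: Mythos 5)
Your proposal follows the paper's own route in both parts: combine Theorem \ref{thm:graded generalized Kauer moves} (with trivial auxiliary group for part (1), and as a $\Z^{2}$-graded move for part (2)) with the graded Happel theorem and the identification of the graded stable category of the trivial extension with its derived category modulo perfect complexes, exactly as packaged in Corollaries \ref{cor:non gradué} and \ref{cor:gradué}. The one step you flag but do not carry out --- reconciling the $1$-homogeneous grading produced by the Kauer move with the trivial-extension grading required by Theorem \ref{thm:Happel gradué} --- is resolved in the paper by the regrading automorphism $(x,y)\mapsto(x,y-x)$ of $\Z^{2}$ (Lemma \ref{lem:graded generalized Kauer moves}), so nothing essential is missing.
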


\medskip

\noindent \textbf{Acknowledgments.} This paper is part of a PhD thesis supervised by Claire Amiot and supported by the CNRS. Part of this work was done while the author was in Sherbrooke University. This mobility was co funded by the Mitacs Globalink Research Award and the IDEX. The author thanks her advisor for the valuable advice and comments. She also thanks Thomas Brüstle for helpful discussions and the Department of Mathematics of Sherbrooke University for the warm welcome.

\medskip

\phantomsection
\section*{Notations and conventions}
\addcontentsline{toc}{section}{Notations and conventions}

\medskip

In this paper, all algebras are supposed to be finite dimensional over a field $k$. For any algebra $\Lambda$, we denote by $\m{\Lambda}$ the category of finitely generated right $\Lambda$-modules and $D=\Hom[k]{-}{k}$ the standard duality on $\m{\Lambda}$. If $\Lambda$ is self-injective, the stable category of $\m{\Lambda}$ will be denoted $\sm{\Lambda}$. This category has a natural structure of triangulated category \cite{Happel}. Moreover, $\Db{\Lambda}$ stands for the bounded derived category associated to $\m{\Lambda}$ and $\per{\Lambda}$ for the full subcategory of $\Db{\Lambda}$ consisting of perfect complexes over $\Lambda$. Given $\Lambda$ a graded algebra over an abelian group $G$, $\gr{\Lambda}{G}$ denotes the category of finitely generated $G$-graded modules over $\Lambda$ with morphisms of degree zero. Again, if $\Lambda$ is a self-injective $G$-graded algebra, the stable category of $\gr{\Lambda}{G}$ will be denoted $\sgr{\Lambda}{G}$ and has a natural structure of triangulated category \cite{Happel}. Moreover, we denote by $M_{j}$ the $j$-th degree component of $M\in\gr{\Lambda}{G}$ and $\vert m\vert$ the degree of any homogeneous element $m\in M$. Furthermore, $(g)$ stands for the shift functor of degree $g\in G$ in $\mathrm{mod^{G}}(\Lambda)$ where $M(g)=M$ as a $\Lambda$-module and $M(g)_{h}=M_{g+h}$ for all $h\in G$. By an equivalence of $G$-graded categories, we mean an equivalence between two $G$-graded categories that commutes with the shift functors. Finally, arrows in a quiver will be composed from right to left : for arrows $\alpha$, $\beta$, we write $\beta\alpha$ for the path from the start of $\alpha$ to the target of $\beta$. Similarly,
morphisms will be composed from right to left.

\medskip

\section{Derived equivalences induced by trivial extensions}

\medskip

The goal of this section is to study cases where one can obtain a derived equivalence between algebras from an equivalence of categories involving their trivial extension. Let us first recall some facts on trivial extensions.

\medskip

\begin{defn} \label{def:trivial extension}
Let $\Lambda$ be a $k$-algebra. The \textit{trivial extension} $\T{\Lambda}$ of $\Lambda$ by the $\Lambda$-$\Lambda$-bimodule $D\Lambda$ is the algebra defined by $\T{\Lambda}=\Lambda\oplus D\Lambda$ as a $k$-vector space and whose multiplication is induced by the $\Lambda$-$\Lambda$-bimodule structure of $D\Lambda$ i.e.

\[(a,\phi) \ . \ (b,\psi)=(ab, a\psi+\phi b)\]

\noindent for all $a,b\in \Lambda$ and $\phi,\psi\in D\Lambda$.
\end{defn}

\medskip

The trivial extension $\T{\Lambda}$ of any algebra $\Lambda$ is symmetric (see for instance \cite{Schiffler}) and has a natural $\Z$-grading concentrated in degree 0 and 1 where $\T{\Lambda}_{0}=\Lambda$, $\T{\Lambda}_{1}=D\Lambda$.

\medskip

\subsection{Non graded case}

\medskip

The key result of this subsection is given by Happel's theorem which links the bounded derived category of an algebra with the stable category of its trivial extension.

\medskip

\begin{thm}[Happel {\cite[Section II-4.9]{Happel}}; Keller {\cite[Section 9.7]{Keller2}}] \label{thm:Happel}
Let $\Lambda$ be a basic finite dimensional $k$-algebra of finite global dimension. Then, there is a triangle equivalence

\smallskip

\[F:\Db{\Lambda}\overset{\sim}{\longrightarrow} \sgr{\T{\Lambda}}{\Z}\]

\smallskip

\noindent such that $F\circ \tau[2]\overset{\sim}{\longrightarrow} (1)\circ F$, where $\tau$ denotes the Auslander-Reiten translation in $\Db{\Lambda}$.
\end{thm}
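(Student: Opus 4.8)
The plan is to pass to the repetitive algebra and exhibit $F$ as a composite of two elementary triangle functors. First I would recall from \cite{Keller2} that there is an equivalence of categories $\gr{\T{\Lambda}}{\Z}\simeq\m{\widehat{\Lambda}}$, where $\widehat{\Lambda}$ denotes the repetitive algebra of $\Lambda$ --- the locally bounded $k$-category with one copy of $\Lambda$ and one copy of $D\Lambda$ in each integer degree --- and that under this equivalence the degree shift $(1)$ corresponds to the Nakayama automorphism $\nu$ of $\widehat{\Lambda}$. Since $\widehat{\Lambda}$ is self-injective, $\m{\widehat{\Lambda}}$ is a Frobenius category, its stable category $\sm{\widehat{\Lambda}}$ is triangulated \cite{Happel}, and the equivalence above restricts to a triangle equivalence $\sgr{\T{\Lambda}}{\Z}\simeq\sm{\widehat{\Lambda}}$ carrying $(1)$ to $\nu$. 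It therefore suffices to produce a triangle equivalence $F\colon\Db{\Lambda}\overset{\sim}{\longrightarrow}\sm{\widehat{\Lambda}}$ with $F\circ\tau[2]\cong\nu\circ F$.

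To construct $F$: since $\mathrm{gl.dim}\,\Lambda<\infty$ one has $\Db{\Lambda}=\per{\Lambda}$, the homotopy category of bounded complexes of finitely generated projective $\Lambda$-modules. There is a fully faithful exact functor $\iota_{0}\colon\m{\Lambda}\to\m{\widehat{\Lambda}}$ placing a module in degree $0$, with the connecting part of $\widehat{\Lambda}$ acting by zero; applying it termwise gives a triangle functor $\per{\Lambda}\to K^{b}(\m{\widehat{\Lambda}})$. On the other hand, $\m{\widehat{\Lambda}}$ being Frobenius, there is a canonical triangle functor $K^{b}(\m{\widehat{\Lambda}})\to\sm{\widehat{\Lambda}}$, namely the composite $K^{b}(\m{\widehat{\Lambda}})\to\Db{\widehat{\Lambda}}\to\Db{\widehat{\Lambda}}/\per{\widehat{\Lambda}}\simeq\sm{\widehat{\Lambda}}$, the last equivalence being Buchweitz's identification of the singularity category with the stable category for the self-injective $\widehat{\Lambda}$. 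I would define $F$ to be the composite of these two functors. Since the second one inverts quasi-isomorphisms and $\iota_{0}$ is exact, $F(X)\cong\iota_{0}(X)$ in $\sm{\widehat{\Lambda}}$ for every $\Lambda$-module $X$.

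It remains to see that $F$ is fully faithful and dense. For full faithfulness, as $\Db{\Lambda}$ is the thick subcategory generated by the indecomposable projectives $e_{s}\Lambda$, a standard induction along triangles reduces the claim to showing that $F$ induces isomorphisms $\Hom[\Db{\Lambda}]{e_{s}\Lambda}{(e_{t}\Lambda)[n]}\overset{\sim}{\longrightarrow}\Hom[\sm{\widehat{\Lambda}}]{\iota_{0}(e_{s}\Lambda)}{\iota_{0}(e_{t}\Lambda)[n]}$ for all $s,t$ and all $n\in\Z$. The source is $e_{t}\Lambda e_{s}$ for $n=0$ and vanishes otherwise, so one must compute the stable Hom-spaces $\underline{\Hom[\widehat{\Lambda}]{\iota_{0}(e_{s}\Lambda)}{\Omega^{-n}\iota_{0}(e_{t}\Lambda)}}$ and match them. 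This is the heart of the argument and the step I expect to be the main obstacle: it requires analysing the minimal projective $\widehat{\Lambda}$-resolution of $\iota_{0}(e_{s}\Lambda)$ --- whose behaviour on each degree strand is governed by a finite projective (and, dually, injective) resolution over $\Lambda$ --- and the hypothesis $\mathrm{gl.dim}\,\Lambda<\infty$ is precisely what forces the cosyzygies $\Omega^{-n}\iota_{0}(e_{t}\Lambda)$ for $n\neq 0$ to be supported in degrees disjoint from $0$ after finitely many steps, and what makes the $n=0$ stable Hom collapse to $e_{t}\Lambda e_{s}$. Equivalently, one may verify directly that $\iota_{0}(\Lambda)=\bigoplus_{s}\iota_{0}(e_{s}\Lambda)$ is a tilting object of the algebraic Krull--Schmidt triangulated category $\sm{\widehat{\Lambda}}$ with endomorphism algebra $\Lambda$ and apply Keller's Morita theorem for algebraic triangulated categories; this rests on the same computation.

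For density, observe that $\mathrm{Image}(F)$ is a triangulated subcategory of $\sm{\widehat{\Lambda}}$ containing $\iota_{0}(X)$ for every $\Lambda$-module $X$; using finite injective coresolutions over $\Lambda$ together with exactness of the degree-shifted embeddings $\iota_{i}$ and the triangles coming from the short exact sequences $0\to\iota_{1}(I)\to\widehat{P}\to\iota_{0}(P)\to 0$ that express projective covers over $\widehat{\Lambda}$, one checks that $\mathrm{Image}(F)$ contains every $\iota_{i}(X)$, $i\in\Z$; and these objects generate $\sm{\widehat{\Lambda}}$ because any finite-dimensional $\widehat{\Lambda}$-module admits a finite degree filtration with such subquotients. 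Hence $F$ is a triangle equivalence. Finally, $F\circ\tau[2]\cong\nu\circ F$ follows formally: a triangle equivalence commutes up to isomorphism with Serre functors when they exist; $\Db{\Lambda}$ has Serre functor $-\otimes^{\mathbf{L}}_{\Lambda}D\Lambda$ (again by finite global dimension), whence $\tau=(-\otimes^{\mathbf{L}}_{\Lambda}D\Lambda)\circ[-1]$ and $\tau[2]$ is its Serre functor shifted by $1$, while the Auslander-Reiten translation of $\sm{\widehat{\Lambda}}$ is $\nu\circ[-2]$; comparing the two Serre functors through $F$ gives $F\circ\tau[2]\cong\nu\circ F$, and carrying $\nu$ back to the shift $(1)$ under $\sm{\widehat{\Lambda}}\simeq\sgr{\T{\Lambda}}{\Z}$ completes the proof.
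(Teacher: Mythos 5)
The paper offers no proof of this statement: it is imported verbatim from Happel \cite{Happel} and Keller \cite{Keller2}, so the only comparison available is with those sources and with the paper's own graded generalization (Theorem \ref{thm:Happel gradué}). Your outline follows Keller's route rather than Happel's. Happel constructs $F$ by an explicit inductive extension of complexes inside the Frobenius category of sequences $(M_{n},f_{n})$ --- the non-graded version of the category $\mathscr{C}^{G}(\Lambda)$ that this paper sets up --- whereas you factor $F$ through the identification $\Db{\widehat{\Lambda}}/\per{\widehat{\Lambda}}\simeq\sm{\widehat{\Lambda}}$ (the same Rickard equivalence the paper invokes in the proof of Corollary \ref{cor:non gradué}) and reduce everything to showing that $\iota_{0}(\Lambda)$ is a tilting object with endomorphism ring $\Lambda$. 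Both routes are legitimate. Yours gives a cleaner formal skeleton: the density argument via degree filtrations and the derivation of $F\circ\tau[2]\cong(1)\circ F$ from uniqueness of Serre functors are exactly right as stated. Happel's buys an explicit description of $F$ on objects, which is what the paper actually uses later when it evaluates $F_{i}$ on specific modules in Propositions \ref{prop:tilting object for triangulated matrices} and \ref{prop:tilting object for graded triangular matrices}.

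The one real gap is the one you flag yourself: the computation of $\underline{\mathrm{Hom}}_{\widehat{\Lambda}}(\iota_{0}(e_{s}\Lambda),\Omega^{-n}\iota_{0}(e_{t}\Lambda))$, equivalently the verification that $\iota_{0}(\Lambda)$ has vanishing stable self-extensions in all nonzero degrees and stable endomorphism ring $\Lambda$. This is not a technical afterthought; it is the entire content of the theorem, and it is precisely where $\mathrm{gl.dim}\,\Lambda<\infty$ enters --- without that hypothesis the functor is only fully faithful, not dense, as the paper itself recalls in the proof of Proposition \ref{prop:tilting object for triangulated matrices}. Your heuristic (the cosyzygies of $\iota_{0}(e_{t}\Lambda)$ drift away from degree $0$ after finitely many steps) is the correct mechanism, but as written it is an expectation rather than an argument. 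To close it you would need to produce the minimal graded projective resolution of $\iota_{0}(e_{t}\Lambda)$ explicitly, starting from the exact sequence $0\to e_{t}D\Lambda(-1)\to e_{t}\T{\Lambda}\to e_{t}\Lambda\to 0$ and splicing in the finite injective coresolution of $e_{t}\Lambda$ over $\Lambda$, and then check degreewise that the resulting stable Hom spaces vanish for $n\neq 0$ and equal $e_{t}\Lambda e_{s}$ for $n=0$. Everything else in your proposal would then go through.
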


\medskip

In particular, given two basic finite dimensional $k$-algebras $\Lambda_{1}$ and $\Lambda_{2}$ of finite global dimension such that the stable categories $\sgr{\T{\Lambda_{i}}}{\Z}$ are equivalent as $\Z$-graded triangulated categories, we obtain the following commutative diagram

\smallskip

\begin{center}
    \begin{tikzcd}[row sep=1.5cm, column sep=1.5cm]
    \Db{\Lambda_{1}} \arrow{d}[left]{\wr} \arrow[dashed]{r}[above]{\sim} &\Db{\Lambda_{2}} \arrow{d}[right]{\wr} \\
    \sgr{\T{\Lambda_{1}}}{\Z} \arrow{r}[above]{\sim}&\sgr{\T{\Lambda_{2}}}{\Z}
    \end{tikzcd}
\end{center}

\smallskip

\noindent where the columns are the equivalence from Theorem \ref{thm:Happel}. One way to obtain such an equivalence between the stable categories $\sgr{\T{\Lambda_{i}}}{\Z}$ is to consider an equivalence of $\Z$-graded triangulated categories between the bounded derived categories of $\gr{\T{\Lambda_{i}}}{\Z}$ as follows.

\medskip

\begin{cor} \label{cor:non gradué}
Let $\Lambda_{1}$ and $\Lambda_{2}$ be two basic finite dimensional $k$-algebras of finite global dimension. If there is an equivalence of $\Z$-graded triangulated categories 

\[\Db{\gr{\T{\Lambda_{1}}}{\Z}}\simeq \Db{\gr{\T{\Lambda_{2}}}{\Z}}\]

\noindent then $\Lambda_{1}$ and $\Lambda_{2}$ are derived equivalent.
\end{cor}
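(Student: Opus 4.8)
The plan is to connect $\Db{\gr{\T{\Lambda_{1}}}{\Z}}$ with the stable category $\sgr{\T{\Lambda_{1}}}{\Z}$ appearing in Happel's Theorem~\ref{thm:Happel}, and likewise for $\Lambda_{2}$, by passing through the graded singularity category. Write $\Phi\colon\Db{\gr{\T{\Lambda_{1}}}{\Z}}\overset{\sim}{\longrightarrow}\Db{\gr{\T{\Lambda_{2}}}{\Z}}$ for the given equivalence of $\Z$-graded triangulated categories. The starting point is that $\T{\Lambda_{i}}$, with its natural $\Z$-grading concentrated in degrees $0$ and $1$, is finite dimensional over $k$ — so $\gr{\T{\Lambda_{i}}}{\Z}$ is a $\mathrm{Hom}$-finite abelian category and $\bigoplus_{n\in\Z}\Hom{X}{Y[n]}$ is finite dimensional for all $X,Y$ in $\Db{\gr{\T{\Lambda_{i}}}{\Z}}$ — and, being symmetric, is graded self-injective. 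Using graded self-injectivity, the full subcategory $\per{\gr{\T{\Lambda_{i}}}{\Z}}$ of perfect complexes (equivalently, of complexes of finite projective dimension) admits a description purely in terms of the triangulated structure of $\Db{\gr{\T{\Lambda_{i}}}{\Z}}$: this is the graded analogue of the classical fact for self-injective algebras, and may be phrased, for instance, by saying that $X$ is perfect if and only if $\bigoplus_{n\in\Z}\Hom{X}{X[n]}$ is finite dimensional. Since moreover $\per{\gr{\T{\Lambda_{i}}}{\Z}}$ is stable under the degree shift $(1)$, the equivalence $\Phi$ restricts to an equivalence of $\Z$-graded triangulated categories between $\per{\gr{\T{\Lambda_{1}}}{\Z}}$ and $\per{\gr{\T{\Lambda_{2}}}{\Z}}$.

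By the universal property of Verdier quotients, $\Phi$ therefore descends to a triangle equivalence $\Db{\gr{\T{\Lambda_{1}}}{\Z}}/\per{\gr{\T{\Lambda_{1}}}{\Z}}\simeq\Db{\gr{\T{\Lambda_{2}}}{\Z}}/\per{\gr{\T{\Lambda_{2}}}{\Z}}$, still compatible with the shift $(1)$. Now, because $\T{\Lambda_{i}}$ is graded self-injective, the graded version of the Buchweitz--Rickard identification of the singularity category of a self-injective algebra with its stable module category provides a ($\Z$-graded) triangle equivalence $\Db{\gr{\T{\Lambda_{i}}}{\Z}}/\per{\gr{\T{\Lambda_{i}}}{\Z}}\simeq\sgr{\T{\Lambda_{i}}}{\Z}$; combining the two gives $\sgr{\T{\Lambda_{1}}}{\Z}\simeq\sgr{\T{\Lambda_{2}}}{\Z}$. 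Finally, Happel's Theorem~\ref{thm:Happel} yields triangle equivalences $\Db{\Lambda_{i}}\simeq\sgr{\T{\Lambda_{i}}}{\Z}$, hence $\Db{\Lambda_{1}}\simeq\Db{\Lambda_{2}}$. Since the $\Lambda_{i}$ have finite global dimension we have $\Db{\Lambda_{i}}=\per{\Lambda_{i}}$, so the image of $\Lambda_{1}$ under such an equivalence is a tilting complex over $\Lambda_{2}$ with endomorphism algebra isomorphic to $\Lambda_{1}$; by Rickard's Morita theory for derived categories \cite{Rickard}, this shows that $\Lambda_{1}$ and $\Lambda_{2}$ are derived equivalent.

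The only step I expect to require real care is the intrinsic triangulated characterization of $\per{\gr{\T{\Lambda_{i}}}{\Z}}$ inside $\Db{\gr{\T{\Lambda_{i}}}{\Z}}$, which is exactly where graded self-injectivity of the trivial extension enters; granted this, the rest is formal manipulation of Verdier quotients and composition of equivalences, together with invocations of Happel's and Rickard's theorems. I would also note that the conclusion does not genuinely use that $\Phi$ respects the gradings — an ungraded triangle equivalence already suffices — but keeping track of the shift $(1)$ throughout costs nothing and is convenient for the graded refinements established afterwards.
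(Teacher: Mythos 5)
Your argument is exactly the paper's (the paper's proof is a one-line citation of the Rickard--Buchweitz identification $\sgr{A}{\Z}\simeq\Db{\gr{A}{\Z}}/\per{\gr{A}{\Z}}$ for graded self-injective $A$, combined with Happel's theorem); you have simply filled in the intermediate steps. The only point I would adjust is the intrinsic characterization of perfect complexes: the safe and standard criterion is that $X$ is perfect if and only if $\Hom{X}{Y[n]}$ vanishes for $n\gg 0$ for every object $Y$ (testing against $Y=X$ alone is not obviously sufficient), but this does not affect the validity of the overall approach, and your closing remark that the graded compatibility of $\Phi$ is not actually needed for this corollary is correct.
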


\medskip

\begin{proof}
This follows from the equivalence of triangulated categories between the stable category $\sgr{A}{\Z}$ and the quotient category $\Db{\gr{A}{\Z}}/\per{\gr{A}{\Z}}$ for any $\Z$-graded self-injective algebra $A$ \cite{Rickard}. 
\end{proof}

\medskip

We now apply the previous result for an equivalence between the $\Z$-graded bounded derived category arising from a tilting object of the form $\oplus_{n}T(n)$ where $T=\oplus_{i} \ e_{i}\T{\Lambda}(n_{i})$.

\medskip

\begin{cor} \label{cor:non gradué projectif}
Let $\Lambda_{1}$ and $\Lambda_{2}$ be two basic finite dimensional $k$-algebras of finite global dimension such that there exists $\phi:\T{\Lambda_{1}}\rightarrow \T{\Lambda_{2}}$ an isomorphism of algebras. We denote by $\proj{\T{\Lambda_{i}}}{\Z}$ the full subcategory of $\gr{\T{\Lambda_{i}}}{\Z}$ whose objects are the projectives. Let us assume that $\phi$ sends an homogeneous element of $\T{\Lambda_{1}}$ onto an homogeneous element of $\T{\Lambda_{2}}$ and that for all $j$ there exists $n_{j}\in\Z$ such that 

\smallskip

\begin{equation*}
    \begin{aligned}
    \proj{\T{\Lambda_{1}}}{\Z} &\overset{\sim}{\longrightarrow} \proj{\T{\Lambda_{2}}}{\Z} \\[0.1cm]
    e_{j}\T{\Lambda_{1}} &\longmapsto \left(\phi(e_{j})\T{\Lambda_{2}}\right)(n_{j})
    \end{aligned}
\end{equation*}

\smallskip

\noindent gives an equivalence of $\Z$-graded categories. Then, the algebras $\Lambda_{1}$ and $\Lambda_{2}$ are derived equivalent.
\end{cor}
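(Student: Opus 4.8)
The plan is to invoke Corollary \ref{cor:non gradué}: it is enough to build an equivalence of $\Z$-graded triangulated categories $\Db{\gr{\T{\Lambda_1}}{\Z}}\simeq\Db{\gr{\T{\Lambda_2}}{\Z}}$. I would start with two elementary remarks. Since $\phi$ is an isomorphism of algebras and $\{e_j\}_j$ is a complete set of primitive orthogonal idempotents of $\T{\Lambda_1}$, the family $\{\phi(e_j)\}_j$ is one for $\T{\Lambda_2}$; moreover each $\phi(e_j)$ is a nonzero homogeneous idempotent by hypothesis, hence of degree $0$, so that $\phi(e_j)\in\Lambda_2=(\T{\Lambda_2})_0$ and the $\Z$-graded projective module $(\phi(e_j)\T{\Lambda_2})(n_j)$ makes sense. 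Writing $P_1=\bigoplus_j e_j\T{\Lambda_1}$ with its natural grading and $P_2=\bigoplus_j(\phi(e_j)\T{\Lambda_2})(n_j)$, the displayed functor in the statement is then a $\Z$-graded equivalence $\proj{\T{\Lambda_1}}{\Z}\xrightarrow{\sim}\proj{\T{\Lambda_2}}{\Z}$ carrying $P_1$ to $P_2$.

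Next I would bring in the graded version of Morita's theorem. As an ungraded module $P_2\cong\T{\Lambda_2}$ (because $\sum_j\phi(e_j)=1$), so $P_2$ is a graded projective generator of $\gr{\T{\Lambda_2}}{\Z}$: it is projective, finitely generated, and $\T{\Lambda_2}=\bigoplus_j\phi(e_j)\T{\Lambda_2}$ lies in $\mathrm{add}\{P_2(n):n\in\Z\}$. Hence $\gr{\T{\Lambda_2}}{\Z}$ is equivalent, as a $\Z$-graded category, to $\gr{B}{\Z}$, where $B=\bigoplus_{n\in\Z}\mathrm{Hom}_{\gr{\T{\Lambda_2}}{\Z}}(P_2,P_2(n))$ is the $\Z$-graded endomorphism algebra of $P_2$. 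Because $\proj{\T{\Lambda_i}}{\Z}$ is a full subcategory of $\gr{\T{\Lambda_i}}{\Z}$ and an equivalence of $\Z$-graded categories induces a graded algebra isomorphism between the graded endomorphism algebras of corresponding objects, the given equivalence yields $B\cong\bigoplus_{n\in\Z}\mathrm{Hom}_{\gr{\T{\Lambda_1}}{\Z}}(P_1,P_1(n))$ as $\Z$-graded algebras. Since $\T{\Lambda_1}$ is basic and $P_1$ is its regular graded module, this last algebra is just $\T{\Lambda_1}$ with its natural grading, so $B\cong\T{\Lambda_1}$ as $\Z$-graded algebras. Therefore $\gr{\T{\Lambda_1}}{\Z}\simeq\gr{B}{\Z}\simeq\gr{\T{\Lambda_2}}{\Z}$ as $\Z$-graded categories; deriving this equivalence of abelian categories gives a triangle equivalence $\Db{\gr{\T{\Lambda_1}}{\Z}}\simeq\Db{\gr{\T{\Lambda_2}}{\Z}}$ commuting with the grading shift $(1)$, and Corollary \ref{cor:non gradué} then yields the derived equivalence of $\Lambda_1$ and $\Lambda_2$.

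Each of these steps — that $P_2$ is a graded progenerator, that the given equivalence transports graded endomorphism algebras, and that the graded endomorphism algebra of the regular module of a basic algebra is the algebra itself — is routine, and I do not expect a serious obstacle. The only places the hypotheses are genuinely used are: homogeneity of $\phi$, which forces $\phi(e_j)$ into degree $0$ and makes $P_2$ a legitimate object of $\gr{\T{\Lambda_2}}{\Z}$; and the existence of the shifts $n_j$ turning the displayed assignment into an equivalence, which is exactly what produces the graded progenerator $P_2$ with the right graded endomorphism algebra. If anything needs care it is the bookkeeping with grading conventions and checking that the graded Morita equivalence commutes with the shift functor $(1)$.
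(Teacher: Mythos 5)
Your proof is correct and follows essentially the route the paper intends: the paper states this corollary without a printed proof, but the sentence preceding it indicates that the equivalence is to come from the tilting object $\oplus_{n}T(n)$ with $T=\oplus_{j}\,(\phi(e_{j})\T{\Lambda_{2}})(n_{j})$, which is exactly your graded progenerator $P_{2}$, so your graded Morita argument (identifying its graded endomorphism algebra with $\T{\Lambda_{1}}$ and then invoking Corollary \ref{cor:non gradué}) is the same mechanism. The observation that homogeneity forces $\phi(e_{j})$ into degree $0$ is a correct and necessary piece of bookkeeping.
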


\medskip

\begin{ex} \label{ex:cor non gradué projectif}
    Let us consider $\Lambda_{1}$ and $\Lambda_{2}$ two finite dimensional $k$-algebras of finite global dimension defined by quiver and relations where the quivers are given by

    \smallskip

    \begin{center}
        $Q_{\Lambda_{1}}:$
        \begin{tikzcd}[column sep=1cm]
            1 &2 \arrow[cramped]{l}[above]{\alpha_{1}} \arrow[cramped]{r}[above]{\alpha_{2}} &3
        \end{tikzcd} 
        \qquad \mbox{and} \qquad 
        $Q_{\Lambda_{2}}:$
        \begin{tikzcd}[column sep=1cm]
            1 \arrow[cramped]{r}[above]{\beta_{1}} \arrow[phantom]{r}[name=Z]{}  &2 \arrow[cramped]{r}[above]{\beta_{2}} \arrow[phantom]{r}[name=A]{} \arrow[dash, densely dashed, bend right=45, cramped, from=Z]{A} &3
        \end{tikzcd} 
    \end{center}

    \smallskip

    \noindent with no relations for $\Lambda_{1}$ and with relation $\beta_{2}\beta_{1}=0$ for $\Lambda_{2}$. Note that these two algebras have the same trivial extension. Considering the natural $\Z$-grading of the trivial extension, we obtain the two following $\Z$-graded quivers for the trivial extension

    \smallskip

    \begin{center}
        $Q_{\T{\Lambda_{1}}} :$
        \begin{tikzcd}[column sep=1cm]
            1 \arrow[cramped, yshift=1mm]{r}[above]{1} &2 \arrow[cramped, yshift=-1mm]{l}[below]{0} \arrow[cramped, yshift=1mm]{r}[above]{0} &3 \arrow[cramped, yshift=-1mm]{l}[below]{1}
        \end{tikzcd}
        \qquad \mbox{and} \qquad
        $Q_{\T{\Lambda_{2}}} :$
        \begin{tikzcd}[column sep=1cm]
            1 \arrow[cramped, yshift=1mm]{r}[above]{0} &2 \arrow[cramped, yshift=-1mm]{l}[below]{1} \arrow[cramped, yshift=1mm]{r}[above]{0} &3 \arrow[cramped, yshift=-1mm]{l}[below]{1}
        \end{tikzcd}
    \end{center}

    \smallskip

    \noindent Thus, there is an equivalence of $\Z$-graded categories given by

    \smallskip
    
   \begin{equation*}
    \begin{aligned}
    \proj{\T{\Lambda_{1}}}{\Z} &\overset{\sim}{\longrightarrow} \proj{\T{\Lambda_{2}}}{\Z} \\[0.1cm]
    e_{1}\T{\Lambda_{1}} &\longmapsto e_{1}\T{\Lambda_{2}} && \\
    e_{j}\T{\Lambda_{1}} &\longmapsto (e_{j}\T{\Lambda_{2}})(1) &&\mbox{for $j=2,3$}
    \end{aligned}
\end{equation*}

    \smallskip

    \noindent In particular, these two algebras satisfy the condition of Corollary \ref{cor:non gradué projectif} for $\phi:\T{\Lambda_{1}}\rightarrow\T{\Lambda_{2}}$ being the identity. Hence, $\Lambda_{1}$ and $\Lambda_{2}$ are derived equivalent.
\end{ex}

\medskip

\begin{rem} \label{rem:Happel Rickard}
    Let $\Lambda_{1}$ and $\Lambda_{2}$ be two algebras satisfying the assumptions of Corollary \ref{cor:non gradué projectif}. Thus, we obtain the following commutative diagram \cite[Section 9.7]{Keller2}

    \smallskip

\begin{center}
    \begin{tikzcd}[row sep=1.5cm, column sep=2cm]
    \Db{\Lambda_{1}} \arrow{d}[left]{\wr} \arrow[dashed]{r}[above]{\sim} &\Db{\Lambda_{2}} \arrow{d}[right]{\wr} \\
    \sgr{\T{\Lambda_{1}}}{\Z} \arrow{d} \arrow{r}[above]{\sim}&\sgr{\T{\Lambda_{2}}}{\Z} \arrow{d} \\
    \sm{\T{\Lambda_{1}}} \arrow{r}[above]{\sim} &\sm{\T{\Lambda_{2}}}
    \end{tikzcd}
\end{center}

\smallskip

\noindent where the top commutative square is given by the equivalence of Theorem \ref{thm:Happel}. In particular, the derived equivalence between the $\Lambda_{i}$ yields the existence of a tilting object $T\in\per{\Lambda_{2}}$ whose endomorphism ring is isomorphic to $\Lambda_{1}$. Thanks to Theorem 3.1 of \cite{Rickard}, we also obtain the following commutative diagram

\smallskip

\begin{center}
    \begin{tikzcd}[row sep=2.3cm, column sep=3cm]
    \per{\Lambda_{1}} \arrow{r}[above]{-\underset{\Lambda_{1}}{\overset{\mbox{\tiny \bf{L}}}{\otimes}}T} \arrow{d}[left, xshift=-1mm]{-\underset{\Lambda_{1}}{\overset{\mbox{\tiny \bf{L}}}{\otimes}}\T{\Lambda_{1}}} &\per{\Lambda_{2}} \arrow{d}[right, xshift=1mm]{-\underset{\Lambda_{2}}{\overset{\mbox{\tiny \bf{L}}}{\otimes}}\T{\Lambda_{2}}} \\
    \per{\T{\Lambda_{1}}} \arrow{r}[below]{-\underset{\T{\Lambda_{1}}}{\overset{\mbox{\tiny \bf{L}}}{\otimes}}(T\,\underset{\Lambda_{2}}{\overset{\mbox{\tiny \bf{L}}}{\otimes}}\,\T{\Lambda_{2}})} &\per{\T{\Lambda_{2}}}
    \end{tikzcd}
\end{center}

\smallskip

\noindent Using Theorem 2.1 of \cite{Rickard}, we deduce an equivalence of triangulated categories

\smallskip

\begin{center}
    \begin{tikzcd}[column sep=3cm]
        \sm{\T{\Lambda_{1}}} \arrow[cramped]{r}[above]{-\underset{\T{\Lambda_{1}}}{\overset{\mbox{\tiny \bf{L}}}{\otimes}}(T\,\underset{\Lambda_{2}}{\overset{\mbox{\tiny \bf{L}}}{\otimes}}\,\T{\Lambda_{2}})} &\sm{\T{\Lambda_{2}}}
    \end{tikzcd}
\end{center}

\smallskip

\noindent which may be different from the one induced by the first commutative square that arises from Happel's theorem.

\end{rem}

\medskip

\subsection{Graded case}

\medskip

In this subsection, we generalize the result of the previous subsection for graded algebras. Let us consider $\Lambda$ a graded finite dimensional $k$-algebra over an abelian group $G$. The $G$-grading of $\Lambda$ induces a $G$-grading on its dual $D\Lambda=\Hom[k]{\Lambda}{k}$ as follows

\[\left(D\Lambda\right)_{g}:=D(\Lambda_{-g})=\Hom[k]{\Lambda_{-g}}{k}\]

\noindent for all $g\in G$. This gives rise to a $G$-grading on the trivial extension $\T{\Lambda}$ of $\Lambda$ where

\[\T{\Lambda}_{g}:=\Lambda_{g}\oplus (D\Lambda)_{g}=\Lambda_{g}\oplus D(\Lambda_{-g})\]

\noindent for all $g\in G$. In particular, for $(b_{1},\ldots, b_{n})$ a homogeneous basis of $\Lambda$, $(b_{1}^{*},\ldots, b_{n}^{*})$ is a homogeneous basis of $D\Lambda$ where the degree $\vert b_{i}^{*}\vert$ of $b_{i}^{*}$ is $-\vert b_{i}\vert $. Thus, $(b_{1},\ldots, b_{n},b_{1}^{*},\ldots, b_{n}^{*})$ is a homogeneous basis of $\T{\Lambda}$ for the $G$-grading induced by $\Lambda$. Considering the natural $\Z$-grading of $\T{\Lambda}$, we have a $(\Z\times G)$-grading on the basis $(b_{1},\ldots, b_{n},b_{1}^{*},\ldots, b_{n}^{*})$ of $\T{\Lambda}$ which extends to a $(\Z\times G)$-grading on $\T{\Lambda}$. We denote by $(p,g)$ the shift functor of degree $(p,g)\in\Z\times G$ in $\gr{\T{\Lambda}}{\Z\times G}$.

\medskip

The key step is to generalize Happel's theorem to the graded case.

\medskip

\begin{thm}[Graded version of Happel theorem] \label{thm:Happel gradué}
Let $G$ be an abelian group and $\Lambda$ be a basic $G$-graded finite dimensional $k$-algebra of finite global dimension. Then, there is an equivalence of triangulated categories

\smallskip

\[F:\Db{\gr{\Lambda}{G}}\overset{\sim}{\longrightarrow} \sgr{\T{\Lambda}}{\Z\times G}\]

\smallskip

\noindent such that $F\circ \tau[2]\overset{\sim}{\longrightarrow} (1,0)\circ F$ and $F\circ (g)\overset{\sim}{\longrightarrow} (0,g)\circ F$ for all $g\in G$, where $\tau$ denotes the Auslander-Reiten translation in $\Db{\gr{\Lambda}{G}}$.
\end{thm}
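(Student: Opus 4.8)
The plan is to mimic Happel's original argument but carry the extra $G$-grading along at every step. First, I would recall that for a basic finite-dimensional $G$-graded algebra $\Lambda$ of finite global dimension, the repetitive algebra $\hat\Lambda$ carries a natural $(\Z\times G)$-grading: the $\Z$-direction records the "level" in the repetitive construction, and the $G$-direction is inherited from $\Lambda$ and $D\Lambda$. The key dictionary, due to Keller, identifies $\Z$-graded $\hat\Lambda$-modules with $\Z$-graded $\T{\Lambda}$-modules; in the $G$-graded setting this upgrades to an equivalence $\gr{\hat\Lambda}{G}\simeq\gr{\T{\Lambda}}{\Z\times G}$ that is compatible with the stable structures, so it suffices to produce a triangle equivalence $\Db{\gr{\Lambda}{G}}\overset{\sim}{\to}\underline{\mathrm{mod}}^{G}(\hat\Lambda)$ with the stated intertwining properties, and then transport along Keller's identification.

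To build that equivalence, I would follow Happel's route: embed $\gr{\Lambda}{G}$ into $\gr{\hat\Lambda}{G}$ via the canonical inclusion $\Lambda\hookrightarrow\hat\Lambda$ (restriction/extension of scalars), note that this lands in the stable category and sends projective resolutions to genuine objects, and use that $\gr{\Lambda}{G}$ has finite global dimension (the global dimension of a $G$-graded algebra equals that of the ungraded one, since forgetting the grading is exact and faithful and graded projectives are projective) to extend the functor to $\Kb{\proj^{G}(\Lambda)}\simeq\Db{\gr{\Lambda}{G}}$. One then checks the functor is a triangle functor, is fully faithful — here one uses the standard computation of morphisms in the stable category of a self-injective graded algebra in terms of $\hat\Lambda$-module $\mathrm{Hom}$'s, exactly as in the ungraded case, with all $\mathrm{Hom}$-spaces now $G$-graded — and is dense, because every object of $\underline{\mathrm{mod}}^{G}(\hat\Lambda)$ is stably isomorphic to a complex built from $\Lambda$-modules sitting in the various levels.

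For the intertwining relations, the functor $(1,0)$ on the $\hat\Lambda$-side is exactly the "shift of level" automorphism of the repetitive algebra, which under Happel's identification corresponds to $\nu[-2]$ where $\nu$ is the Nakayama (Serre) functor; since in $\Db{\gr{\Lambda}{G}}$ one has $\nu = \tau[1]$ (graded Auslander–Reiten theory: the AR translation is $\tau = \nu[-1]$, and this formula is insensitive to the extra $G$-grading because Serre duality is), we get $F\circ\tau[2]\simeq F\circ\nu[1]\simeq$ wait—more carefully, the level shift corresponds to $\nu^{-1}[2]$ or $\nu[-2]$ depending on conventions, and one matches signs so that it becomes $\tau[2]$; I would just cite the ungraded statement of Theorem \ref{thm:Happel} and observe the $G$-grading plays no role in this particular identification. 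The relation $F\circ(g)\simeq(0,g)\circ F$ is the genuinely new, but easy, point: the $G$-shift on $\gr{\Lambda}{G}$ is induced by the $G$-shift on $\Lambda$ itself, which under the inclusion $\Lambda\hookrightarrow\hat\Lambda$ and Keller's dictionary is precisely the $(0,g)$-shift on $\gr{\T{\Lambda}}{\Z\times G}$ — this is immediate from how we defined the $(\Z\times G)$-grading on $\T{\Lambda}$, since the $G$-component of the grading was declared to be the one coming from $\Lambda$ and $D\Lambda$.

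The main obstacle, I expect, is purely bookkeeping rather than conceptual: one must set up the $(\Z\times G)$-graded repetitive algebra carefully (getting the sign of the $G$-degree on the $D\Lambda$-part right, as in the displayed formula $\vert b_i^*\vert = -\vert b_i\vert$) and check that every functor and every natural isomorphism in Happel's proof is a morphism of $G$-graded (hence $(\Z\times G)$-graded) categories. Concretely, the one place requiring a little care is verifying that the graded global dimension being finite really does give $\Db{\gr{\Lambda}{G}}\simeq\Kb{\proj^{G}(\Lambda)}$ and that graded-projective resolutions behave as expected; everything else is a faithful transcription. I would therefore present the proof as: (1) recall the $(\Z\times G)$-grading on $\hat\Lambda$ and Keller's equivalence $\gr{\hat\Lambda}{G}\simeq\gr{\T{\Lambda}}{\Z\times G}$; (2) invoke the ungraded Happel–Keller equivalence and observe that every step respects the $G$-action, yielding the $G$-graded triangle equivalence $F$; (3) read off $F\circ\tau[2]\simeq(1,0)\circ F$ from the ungraded statement and $F\circ(g)\simeq(0,g)\circ F$ from the construction of the grading.
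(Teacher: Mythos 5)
Your proposal is correct and follows essentially the same route as the paper: the paper also transcribes Happel's construction verbatim into the $G$-graded setting, its category $\mathscr{C}^{G}(\Lambda)$ of sequences $(M_{n},f_{n})$ being exactly the graded model for modules over the repetitive algebra that you invoke via Keller's dictionary, and its Lemmas \ref{lem:équivalence de catégories}--\ref{lem:construction d'un foncteur injectif} supply the Frobenius structure and suspension functor $S=\coker{\mu}$ that your sketch takes for granted. The paper's own proof is likewise only a two-line reference to the non-graded case and the author's thesis, so your level of detail (including the deferral of the $\tau[2]\leftrightarrow(1,0)$ sign-matching to the ungraded statement) matches the source.
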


\medskip

In \cite{Happel}, the equivalence of Theorem \ref{thm:Happel} is constructed using a category equivalent to $\gr{\T{\Lambda}}{\Z}$. We begin with adapting this equivalence of categories to the graded case. Let $\Lambda$ be a graded finite dimensional $k$-algebra over an abelian group $G$. For all $M,N\in\gr{\Lambda}{G}$, $\Hom[\Lambda]{M}{N}$ is a finitely generated $\Lambda$-module. Moreover, since $M$ is finitely generated and $M$ and $N$ are $G$-graded, $\Hom[\Lambda]{M}{N}$ has a structure of $G$-graded $\Lambda$-module. Let us 
define the category $\mathscr{C}^{G}(\Lambda)$ as follows

\smallskip

\begin{itemize}[label=\textbullet, font=\tiny]
\item The objects of $\mathscr{C}^{G}(\Lambda)$ are sequences $(M_{n},f_{n})_{n\in\Z}$ where the $M_{n}$ are $G$-graded $\Lambda$-modules, all but a finite number being zero and the $f_{n}:M_{n}\rightarrow\Hom[\Lambda]{D\Lambda}{M_{n+1}}$ are morphisms in $\gr{\Lambda}{G}$ satisfying $\Hom[\Lambda]{D\Lambda}{f_{n+1}}\circ f_{n}=0$ for all $n\in\Z$. They can be represented in the following way 

\smallskip

    \begin{center}
    \begin{tikzcd}
    \cdots \arrow[phantom]{r}[description]{\sim} &M_{-2} \arrow[phantom]{r}[description]{\sim}[yshift=3mm, font=\scriptsize]{f_{-2}} &M_{-1} \arrow[phantom]{r}[description]{\sim}[yshift=3mm, font=\scriptsize]{f_{-1}} &M_{0} \arrow[phantom]{r}[description]{\sim}[yshift=3mm, font=\scriptsize]{f_{0}} &M_{1} \arrow[phantom]{r}[description]{\sim}[yshift=3mm, font=\scriptsize]{f_{1}} &M_{2} \arrow[phantom]{r}[description]{\sim}[yshift=3mm, font=\scriptsize]{f_{2}} &\cdots
    \end{tikzcd}
\end{center}

\smallskip

\item The morphisms of $\mathscr{C}^{G}(\Lambda)$ from $(M_{n},f_{n})_{n\in\Z}$ to $(N_{n},g_{n})_{n\in\Z}$ are sequences $(h_{n})_{n\in\Z}$ where the $h_{n}:M_{n}\rightarrow N_{n}$ are morphisms in $\gr{\Lambda}{G}$ such that the following diagram commutes for all $n\in\Z$.

\smallskip

\begin{center} 
    \begin{tikzcd}[row sep=1.7cm, column sep=2cm, font=\normalsize, every label/.append style = {font = \small}]
        M_{n} \arrow[r, "f_{n}"] \arrow[d, labels=left,  " h_{n}"]
        & \Hom[\Lambda]{D\Lambda}{M_{n+1}} \arrow[d, shift right= 0.7cm, "{\Hom[\Lambda]{D\Lambda}{h_{n+1}}}"] \\
        N_{n} \arrow[r, "g_{n}"] 
        & \Hom[\Lambda]{D\Lambda}{N_{n+1}}
    \end{tikzcd}
\end{center}
\end{itemize}

\medskip

\begin{lem} \label{lem:équivalence de catégories}
    The categories $\gr{\T{\Lambda}}{\Z\times G}$ and $\mathscr{C}^{G}(\Lambda)$ are equivalent.
\end{lem}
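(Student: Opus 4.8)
The plan is to construct an explicit functor $\Phi : \mathscr{C}^{G}(\Lambda) \to \gr{\T{\Lambda}}{\Z\times G}$ and check it is an equivalence by exhibiting an inverse (or, equivalently, checking it is fully faithful and essentially surjective). The essential observation is that a $(\Z\times G)$-graded $\T{\Lambda}$-module $X$ decomposes as a $G$-graded vector space into its $\Z$-homogeneous pieces $X = \bigoplus_{n\in\Z} X^{(n)}$, where $X^{(n)} := \bigoplus_{g\in G} X_{(n,g)}$. Since $\T{\Lambda}_{0} = \Lambda$ (with its $G$-grading) and $\T{\Lambda}_{1} = D\Lambda$ (with the induced $G$-grading), each $X^{(n)}$ is naturally a $G$-graded $\Lambda$-module, and the action of $D\Lambda \subseteq \T{\Lambda}_{1}$ maps $X^{(n)}$ into $X^{(n+1)}$, while $\T{\Lambda}_{p} = 0$ for $p \neq 0,1$ means these are the only structure maps. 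So I would set $M_{n} := X^{(n)}$ and let $f_{n} : M_{n} \to \Hom[\Lambda]{D\Lambda}{M_{n+1}}$ be the map adjoint to the $D\Lambda$-action $D\Lambda \otimes_{\Lambda} M_{n} \to M_{n+1}$ (using that $D\Lambda$ is finitely generated so the tensor–hom adjunction behaves well on graded modules). The associativity of the $\T{\Lambda}$-action, together with the fact that $D\Lambda \cdot D\Lambda = 0$ in $\T{\Lambda}$, translates exactly into the condition $\Hom[\Lambda]{D\Lambda}{f_{n+1}}\circ f_{n} = 0$; and finite-dimensionality of $X$ forces all but finitely many $M_{n}$ to vanish.

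Conversely, given an object $(M_{n},f_{n})_{n}$ of $\mathscr{C}^{G}(\Lambda)$, I would define $X := \bigoplus_{n} M_{n}$, place $M_{n}$ in $\Z$-degree $n$ (refining each $M_{n}$ by its own $G$-grading to get the $(\Z\times G)$-grading), let $\Lambda$ act componentwise, and let $\phi \in D\Lambda$ act on the summand $M_{n}$ via $M_{n} \xrightarrow{f_{n}} \Hom[\Lambda]{D\Lambda}{M_{n+1}} \xrightarrow{\mathrm{ev}_{\phi}} M_{n+1}$. One then checks this is a genuine $\T{\Lambda}$-module: the only nontrivial relation to verify is $(0,\phi)(0,\psi) = 0$, which holds because the composite involves $f_{n+1}\circ f_{n}$, and $(a,0)(0,\psi) = (0,a\psi)$ plus $(0,\phi)(a,0) = (0,\phi a)$ follow from the $\Lambda$-linearity of $f_{n}$ and of the evaluation maps together with the bimodule structure of $D\Lambda$. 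These two assignments are visibly mutually inverse on objects, and on morphisms the commuting-square condition defining morphisms in $\mathscr{C}^{G}(\Lambda)$ matches precisely the condition that a collection of $G$-graded maps $M_{n} \to N_{n}$ assembles into a $(\Z\times G)$-graded $\T{\Lambda}$-module morphism (degree-zero, $\Lambda$-linear, and compatible with the $D\Lambda$-action). Compatibility with the $(\Z\times G)$-shift functors is immediate from the construction, so this is an equivalence of $(\Z\times G)$-graded categories in the sense fixed in the conventions.

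The step I expect to require the most care is the handling of the $G$-graded hom and the tensor–hom adjunction: one must be sure that $\Hom[\Lambda]{D\Lambda}{M_{n+1}}$, given the internal $G$-grading coming from $D\Lambda$ and $M_{n+1}$ both being finitely generated and $G$-graded, is the correct object so that a degree-zero morphism $f_{n}$ in $\gr{\Lambda}{G}$ corresponds under adjunction to a degree-zero (i.e. $G$-graded) action map $D\Lambda \otimes_{\Lambda} M_{n} \to M_{n+1}$ — here the sign/shift conventions on $\vert b_{i}^{*}\vert = -\vert b_{i}\vert$ in the induced grading on $D\Lambda$ are exactly what make the degrees line up, and I would spell this out on a homogeneous basis as already set up in the text. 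Everything else is a routine diagram chase, essentially identical to the ungraded computation in \cite{Happel} with the $G$-grading carried along passively.
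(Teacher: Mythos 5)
Your construction is correct and is exactly the intended argument: the paper omits the proof of this lemma (it is the graded adaptation of the equivalence used in Happel's ungraded construction, with details deferred to the author's thesis), and your decomposition of a $(\Z\times G)$-graded $\T{\Lambda}$-module into its $\Z$-homogeneous layers with the $D\Lambda$-action converted by tensor--hom adjunction into the maps $f_{n}$ is precisely that equivalence. The only slip is cosmetic: the paper works with \emph{right} modules, so the action map is $M_{n}\otimes_{\Lambda}D\Lambda\to M_{n+1}$ rather than $D\Lambda\otimes_{\Lambda}M_{n}\to M_{n+1}$, and the adjunction $\Hom[\Lambda]{M_{n}\otimes_{\Lambda}D\Lambda}{M_{n+1}}\simeq\Hom[\Lambda]{M_{n}}{\Hom[\Lambda]{D\Lambda}{M_{n+1}}}$ should be applied on that side; everything else goes through as you describe.
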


\medskip

To prove Theorem \ref{thm:Happel gradué}, we want to generalize Lemma 4.1 of \cite{Happel} which is the key step to construct the equivalence of triangulated categories of Theorem \ref{thm:Happel}. We will need the following result on projectives and injectives in the category $\mathscr{C}^{G}(\Lambda)$ which is an analogous version of Lemma 2.2 in \cite{Happel}.

\medskip

\begin{lem} \label{lem:projectif et injectif de la nouvelle catégorie}
    The category $\mathscr{C}^{G}(\Lambda)$ is a Frobenius category whose indecomposable projective-injective objects are of the form 

\smallskip

\begin{center}
    \begin{tikzcd}
    \cdots \arrow[phantom]{r}[description]{\sim} &0 \arrow[phantom]{r}[description]{\sim} &M_{i} \arrow[phantom]{r}[description]{\sim}[yshift=3mm, font=\scriptsize]{\id{M_{i}}} &M_{i+1} \arrow[phantom]{r}[description]{\sim} &0 \arrow[phantom]{r}[description]{\sim} &\cdots
    \end{tikzcd}
\end{center}

\smallskip

\noindent for some $i\in\Z$, where $M_{i+1}$ is indecomposable and injective in $\gr{\Lambda}{G}$ and $M_{i}$ is equal to $\Hom[\Lambda]{D\Lambda}{M_{i+1}}$.
\end{lem}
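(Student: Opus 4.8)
The plan is to obtain the Frobenius structure by transporting it along the equivalence of Lemma~\ref{lem:équivalence de catégories}, and then to read off the projective-injective objects through that equivalence. Since $\T{\Lambda}$ is symmetric it is in particular self-injective, so — the grading group $\Z\times G$ being abelian — the category $\gr{\T{\Lambda}}{\Z\times G}$ is abelian with enough projectives and injectives and its projective and injective objects coincide, as is already used in the conventions section; the indecomposable ones are the shifts $e_i\T{\Lambda}(p,g)$. Via the equivalence $\mathscr{C}^{G}(\Lambda)\simeq\gr{\T{\Lambda}}{\Z\times G}$ this immediately gives that $\mathscr{C}^{G}(\Lambda)$ is a Frobenius category, so it only remains to identify the images of the $e_i\T{\Lambda}(p,g)$.

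Decomposing $e_i\T{\Lambda}=e_i\Lambda\oplus e_iD\Lambda$ along the $\Z$-grading, its $\Z$-degree $0$ component is $e_i\Lambda$, its $\Z$-degree $1$ component is $e_iD\Lambda=D(\Lambda e_i)$, and all other components vanish. Hence, under the equivalence of Lemma~\ref{lem:équivalence de catégories}, the module $e_i\T{\Lambda}$ corresponds to the object of $\mathscr{C}^{G}(\Lambda)$ concentrated in positions $0$ and $1$, with $M_0=e_i\Lambda$, $M_1=D(\Lambda e_i)$, and $f_0\colon e_i\Lambda\to\Hom[\Lambda]{D\Lambda}{D(\Lambda e_i)}$ the map induced by the multiplication $e_i\Lambda\otimes_{\Lambda}D\Lambda\to D(\Lambda e_i)$, $x\otimes\phi\mapsto x\phi$. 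Now $D(\Lambda e_i)$ is an indecomposable injective object of $\gr{\Lambda}{G}$, and the standard duality identifies $\Hom[\Lambda]{D\Lambda}{D(\Lambda e_i)}$ with the space of left $\Lambda$-module maps $\Lambda e_i\to\Lambda$, which is $e_i\Lambda$; under this identification $f_0$ becomes the identity. Applying the shift functor $(p,g)$ moves this object to positions $p$ and $p+1$ and twists the two modules by $g$, and as $i$, $p$, $g$ vary the modules $D(\Lambda e_i)(g)$ exhaust the indecomposable injective objects of $\gr{\Lambda}{G}$. This is exactly the list of objects in the statement; conversely every object of that shape is isomorphic to some $e_j\T{\Lambda}(p,h)$, hence projective-injective, and is indecomposable precisely when $M_{i+1}$ is, since its endomorphism ring is $\mathrm{End}_{\gr{\Lambda}{G}}(M_{i+1})$.

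I expect the only delicate point to be the identification $\Hom[\Lambda]{D\Lambda}{D(\Lambda e_i)}\cong e_i\Lambda$ carried out compatibly with the $G$-grading (tracking degrees through $(D\Lambda)_g=D(\Lambda_{-g})$) and with the structure map $f_0$; everything else is a bookkeeping translation between the $(\Z\times G)$-graded $\T{\Lambda}$-module $e_i\T{\Lambda}$ and the corresponding object of $\mathscr{C}^{G}(\Lambda)$. A purely internal alternative, following the pattern of Lemma~2.2 in \cite{Happel}, would be to equip $\mathscr{C}^{G}(\Lambda)$ with the exact structure whose conflations are the sequences that are short exact in every position, to verify directly from the description of morphisms in $\mathscr{C}^{G}(\Lambda)$ that the two-term objects of the displayed form are both projective and injective, and to check that every object embeds into, and is a quotient of, a finite direct sum of such objects; but the transport argument is shorter and makes the abelian, exact and Frobenius structures transparent.
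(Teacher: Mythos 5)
Your argument is correct, but it takes a different route from the one the paper points to. The paper gives no written proof and simply presents the lemma as ``an analogous version of Lemma 2.2 in \cite{Happel}'', i.e.\ the intended argument is the direct, internal verification in the sequence category (constructing liftings and extensions for the two-term objects, exactly your ``purely internal alternative''). You instead transport the whole statement through the equivalence $\mathscr{C}^{G}(\Lambda)\simeq\gr{\T{\Lambda}}{\Z\times G}$ of Lemma \ref{lem:équivalence de catégories}: since $\T{\Lambda}$ is graded symmetric, graded projectives and graded injectives coincide and the category is Frobenius for free, and the classification reduces to recognising each $e_{i}\T{\Lambda}(p,g)$ as a two-term object with $M_{i}=e_{i}\Lambda(g)$, $M_{i+1}=D(\Lambda e_{i})(g)$ and structure map the identity under $\Hom[\Lambda]{D\Lambda}{D(\Lambda e_{i})}\cong e_{i}\Lambda$. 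This is cleaner and makes the abelian and exact structures on $\mathscr{C}^{G}(\Lambda)$ transparent, at the cost of leaning on Lemma \ref{lem:équivalence de catégories} (which is stated earlier and proved independently, so there is no circularity) and of having to check the grading bookkeeping you already flag: that the transported exact structure is the componentwise one, that $(D\Lambda)_{g}=D(\Lambda_{-g})$ makes $e_{i}D\Lambda\cong D(\Lambda e_{i})$ as $G$-graded modules, and that the homogeneous degree-zero primitive idempotents give all indecomposable graded projectives $e_{i}\T{\Lambda}(p,g)$. The internal route, by contrast, is self-contained and is what generalises Happel's proof verbatim; either is acceptable here.
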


\medskip

\begin{lem} \label{lem:construction d'un foncteur injectif}
   There exists an exact functor $I:\mathscr{C}^{G}(\Lambda)\rightarrow\mathscr{C}^{G}(\Lambda)$ such that $I(X)$ is injective in $\mathscr{C}^{G}(\Lambda)$ for all $X\in\mathscr{C}^{G}(\Lambda)$ and a monomorphism $\mu:\id{\mathscr{C}^{G}(\Lambda)}\rightarrow I$.
\end{lem}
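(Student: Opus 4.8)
The plan is to mimic the construction of a functorial injective embedding for module categories and transport it through the description of injectives given by Lemma \ref{lem:projectif et injectif de la nouvelle catégorie}. First I would recall that $\gr{\Lambda}{G}$ has enough injectives and admits a functorial injective hull: for each $M\in\gr{\Lambda}{G}$ there is a minimal injective $E(M)$ with a natural monomorphism $M\hookrightarrow E(M)$, and $E$ is functorial (any morphism $M\to N$ extends, not canonically in general, but one can at least use the \emph{cosyzygy} construction, or simply the functor $N\mapsto \prod_{S} E(S)^{\dim\Hom[\Lambda]{-}{}}$; the cleanest choice here is $E(M)=D(\Lambda\otimes_k DM)$ composed appropriately, which is manifestly functorial and exact on the relevant subcategory). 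I will fix one such exact functor $E$ together with a monomorphism $\eta:\id{}\to E$ into an injective object of $\gr{\Lambda}{G}$.

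Next, given $X=(M_n,f_n)_{n\in\Z}\in\mathscr{C}^{G}(\Lambda)$, I would build $I(X)$ degreewise by splicing in, for every $n$, the indecomposable-type projective-injective block from Lemma \ref{lem:projectif et injectif de la nouvelle catégorie} built on $E(M_n)$: concretely set
\[
I(X)_n \;=\; E(M_n)\;\oplus\;\Hom[\Lambda]{D\Lambda}{E(M_{n+1})},
\]
with the structure map $I(X)_n\to\Hom[\Lambda]{D\Lambda}{I(X)_{n+1}}$ given by the composite of the projection onto the second summand followed by the obvious inclusion into $\Hom[\Lambda]{D\Lambda}{E(M_{n+1})}\subseteq \Hom[\Lambda]{D\Lambda}{I(X)_{n+1}}$. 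By construction each $I(X)$ is a (finite) direct sum of objects of the form appearing in Lemma \ref{lem:projectif et injectif de la nouvelle catégorie} (after checking only finitely many $M_n$ are nonzero, so only finitely many blocks are nonzero), hence injective in $\mathscr{C}^{G}(\Lambda)$. The assignment $X\mapsto I(X)$ is functorial because $E$ and $\Hom[\Lambda]{D\Lambda}{-}$ are, and it is exact because both are exact (on $\gr{\Lambda}{G}$, and a sequence in $\mathscr{C}^{G}(\Lambda)$ is exact iff it is exact in each degree). The monomorphism $\mu_X:X\to I(X)$ is defined in degree $n$ by $m\mapsto(\eta_{M_n}(m),\,f_n(m))$, where $f_n(m)$ is viewed inside $\Hom[\Lambda]{D\Lambda}{E(M_{n+1})}$ via $\eta_{M_{n+1}}$; commutativity with the structure maps is exactly the relation $\Hom[\Lambda]{D\Lambda}{f_{n+1}}\circ f_n=0$ together with naturality of $\eta$, and $\mu_X$ is a monomorphism since its first component $\eta_{M_n}$ already is.

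The main obstacle I anticipate is \emph{functoriality of the injective embedding in $\gr{\Lambda}{G}$}: injective hulls are not naturally functorial, so I must either work with a genuinely functorial (non-minimal) injective envelope or argue that the ambiguity does not affect exactness and the monomorphism property of $\mu$. Using the explicit exact functor $M\mapsto D(\mathrm{Hom}_k(DM,k)\otimes\cdots)$ — equivalently the "co-standard" injective $D((DM)\Lambda)$ in the graded setting, which for finite-dimensional $\Lambda$ is injective in $\gr{\Lambda}{G}$ and comes with a canonical evaluation monomorphism $M\to D((DM)\Lambda)$ — sidesteps this entirely, at the cost of $I(X)$ not being a minimal injective envelope (which the statement does not require). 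A secondary point to verify is that finiteness is preserved, i.e. that $I(X)\in\mathscr{C}^{G}(\Lambda)$ (only finitely many components nonzero, each finitely generated and $G$-graded), which is immediate since $E$ preserves finite generation over a finite-dimensional algebra and $I(X)_n$ involves only $M_n$ and $M_{n+1}$.
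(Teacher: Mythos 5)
Your construction is essentially the paper's: the paper takes $I(X)_{n}=\Hom[k]{D\Lambda}{X_{n+1}}\oplus\Hom[k]{\Lambda}{X_{n}}$ with the structure map given by the canonical isomorphism $\Hom[k]{D\Lambda}{X_{n+1}}\cong\Hom[\Lambda]{D\Lambda}{\Hom[k]{\Lambda}{X_{n+1}}}$, which is exactly your $E(M_n)\oplus\Hom[\Lambda]{D\Lambda}{E(M_{n+1})}$ for the coinduction functor $E=\Hom[k]{\Lambda}{-}$, and the paper's monomorphism $(f_n\ \ \xi_n)$ with $\xi_n(x)=(a\mapsto a\cdot x)$ is your unit-of-adjunction map. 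The proposal is correct and takes the same route, including the key point of using the functorial coinduced injective rather than injective hulls.
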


\medskip

\begin{proof}
    Let us consider the functor $I:\mathscr{C}^{G}(\Lambda)\rightarrow\mathscr{C}^{G}(\Lambda)$ defined as follows

    \smallskip
    
    \begin{itemize}[label=\textbullet, font=\tiny]
    \item For each $X=(X_{n},f_{n})_{n\in\Z}$ in $\mathscr{C}^{G}(\Lambda)$, $I(X):=(I(X)_{n},d_{n})_{n\in\Z}$ where

    \smallskip
    
    \begin{equation*}
        I(X)_{n}=\Hom[k]{D\Lambda}{X_{n+1}}\oplus\Hom[k]{\Lambda}{X_{n}}
        \qquad \mbox{and} \qquad d_{n}=\begin{pmatrix}
            0 &0 \\[0.5em]
            \delta_{n} & 0
        \end{pmatrix}
    \end{equation*}

    \smallskip

    \noindent for all $n\in\Z$ with $\delta_{n}:\Hom[k]{D\Lambda}{X_{n+1}}\rightarrow \Hom[\Lambda]{D\Lambda}{\Hom[k]{\Lambda}{X_{n+1}}}$ the canonical isomorphism of $\Lambda$-modules defined for all $\phi\in \Hom[k]{D\Lambda}{X_{n+1}}$ by 

    \begin{equation*}
        \delta_{n}(\phi)=(\psi\mapsto (a\mapsto \phi(\psi \ . \ a)))
    \end{equation*}

    \item For each morphism $h=(h_{n})_{n}$ in $\mathscr{C}^{G}(\Lambda)$ from $X=(X_{n},f_{n})_{n\in\Z}$ to $Y=(Y_{n},g_{n})_{n\in\Z}$, $I(h):=(I(h)_{n})_{n\in\Z}$ where the $I(h)_{n}:I(X)_{n}\rightarrow I(Y)_{n}$ are defined by

    \smallskip
    
    \begin{equation*}
        I(h)_{n}=\begin{pmatrix}
            \Hom[k]{D\Lambda}{h_{n+1}} & 0 \\[1em] 0 & \Hom[k]{\Lambda}{h_{n}}
        \end{pmatrix}
    \end{equation*}

    \end{itemize}

    \smallskip

    \noindent It is straightforward that $I$ is a well-defined exact functor using the definition of $\mathscr{C}^{G}(\Lambda)$. Since $\Lambda$ is finite dimensional and the $\Hom[k]{\Lambda}{X_{n}}$ are $G$-graded $\Lambda$-modules that are injective as $\Lambda$-modules, they are injective in $\gr{\Lambda}{G}$. Using the description of indecomposable projective-injective in $\mathscr{C}^{G}(\Lambda)$ given in Lemma \ref{lem:projectif et injectif de la nouvelle catégorie}, we conclude that $I(X)$ is indeed injective in $\mathscr{C}^{G}(\Lambda)$ for all $X\in\mathscr{C}^{G}(\Lambda)$. 

    It remains to construct a monomorphism $\mu:\id{\mathscr{C}^{G}(\Lambda)}\rightarrow I$. For all $X=(X_{n},f_{n})\in\mathscr{C}^{G}(\Lambda)$, let us define $\mu(X)=(\mu(X)_{n})_{n\in\Z}$ where the $\mu(X)_{n}:X_{n}\rightarrow I(X)_{n}$ are given by the matrices $(f_{n} \quad \xi_{n})$ with $\xi_{n}:X_{n}\rightarrow \Hom[k]{\Lambda}{X_{n}}$ defined by $\xi_{n}(x)=(a\mapsto a \ . \ x)$ for all $x\in X_{n}$. One can easily check that $\mu$ is a well-defined monomorphism.
    \end{proof}

    \medskip

    With the notations of the previous lemma, we define the functor $S=\coker{\mu}:\mathscr{C}^{G}(\Lambda)\rightarrow \mathscr{C}^{G}(\Lambda)$. This is an exact functor that sends injectives onto injectives. Thus, it induces a functor $\underline{S}:\underline{\mathscr{C}^{G}(\Lambda)}\rightarrow \underline{\mathscr{C}^{G}(\Lambda)}$ between the stable category $\underline{\mathscr{C}^{G}(\Lambda)}$ of $\mathscr{C}^{G}(\Lambda)$, which is the suspension functor of the triangulated category $\underline{\mathscr{C}^{G}(\Lambda)}$ used to construct the equivalence of Theorem \ref{thm:Happel gradué}. 
    
    \medskip

    \begin{proof}[Proof of Theorem \ref{thm:Happel gradué}]
    The construction of the equivalence follows the same steps as in the non-graded case \cite{Happel}. For more details, we refer to \cite{thèseSoto}.
    \end{proof}

\medskip

By construction, the equivalence of Theorem \ref{thm:Happel gradué} sends any object of $\gr{\Lambda}{G}$ onto itself. Similarly to the non-graded case, Theorem \ref{thm:Happel gradué} induces the following commutative diagram for any basic $G$-graded finite dimensional $k$-algebras $\Lambda_{1}$ and $\Lambda_{2}$ of finite global dimension such that the 
$\sgr{\T{\Lambda_{i}}}{\Z\times G}$ are equivalent as $\Z$-graded triangulated categories

\smallskip

\begin{center}
    \begin{tikzcd}[row sep=1.5cm, column sep=1.5cm]
    \Db{\gr{\Lambda_{1}}{G}} \arrow{d}[left]{\wr} \arrow[dashed]{r}[above]{\sim} &\Db{\gr{\Lambda_{2}}{G}} \arrow{d}[right]{\wr} \\
    \sgr{\T{\Lambda_{1}}}{\Z\times G} \arrow{r}[above]{\sim}&\sgr{\T{\Lambda_{2}}}{\Z\times G}
    \end{tikzcd}
\end{center}

\smallskip

\noindent where the columns are the equivalence from Theorem \ref{thm:Happel gradué}. This leads to the following result which is the graded version of Corollary \ref{cor:non gradué}.

\medskip

\begin{cor} \label{cor:gradué}
Let $\Lambda_{1}$ and $\Lambda_{2}$ be two basic $G$-graded finite dimensional $k$-algebras of finite global dimension. If there is an equivalence of $\Z$-graded triangulated categories

\[\Db{\gr{\T{\Lambda_{1}}}{\Z\times G}}\simeq\Db{\gr{\T{\Lambda_{2}}}{\Z\times G}}\]

\noindent then there is a triangle equivalence

\smallskip

\[\Db{\gr{\Lambda_{1}}{G}}\simeq \Db{\gr{\Lambda_{2}}{G}}\]

\end{cor}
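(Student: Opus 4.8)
The plan is to mimic exactly the argument used for Corollary \ref{cor:non gradué}, but now carrying the extra $G$-grading alongside the natural $\Z$-grading of the trivial extension. The starting point is Rickard's description of the stable category of a graded self-injective algebra as a Verdier quotient of its bounded derived category by the perfect complexes. Applied to $A=\T{\Lambda_i}$ with its $(\Z\times G)$-grading, this gives a triangle equivalence
\[
\sgr{\T{\Lambda_i}}{\Z\times G}\simeq \Db{\gr{\T{\Lambda_i}}{\Z\times G}}\big/\per{\gr{\T{\Lambda_i}}{\Z\times G}},
\]
and I would first check that this equivalence is compatible with the $\Z$-grading coming from the first factor (the degree-shift functor $(1,0)$), so that the equivalence is one of $\Z$-graded triangulated categories. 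This is the graded analogue of the observation used in the proof of Corollary \ref{cor:non gradué}.

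Next, I would feed in the hypothesis: an equivalence of $\Z$-graded triangulated categories $\Db{\gr{\T{\Lambda_1}}{\Z\times G}}\simeq\Db{\gr{\T{\Lambda_2}}{\Z\times G}}$. Since such an equivalence commutes (up to isomorphism) with the shift functor $(1,0)$, it sends perfect complexes to perfect complexes — perfectness is an intrinsic property of the triangulated category together with its compact generators, and more concretely the $(\Z\times G)$-graded projectives are characterized triangulated-categorically — so it descends to an equivalence of the Verdier quotients. Combining with the previous paragraph, this produces an equivalence of $\Z$-graded triangulated categories $\sgr{\T{\Lambda_1}}{\Z\times G}\simeq\sgr{\T{\Lambda_2}}{\Z\times G}$.

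Now I would invoke the graded Happel theorem (Theorem \ref{thm:Happel gradué}): for each $i$ there is a triangle equivalence $F_i:\Db{\gr{\Lambda_i}{G}}\overset{\sim}{\to}\sgr{\T{\Lambda_i}}{\Z\times G}$ intertwining $\tau[2]$ with $(1,0)$ and $(g)$ with $(0,g)$. In particular each $F_i$ is an equivalence of $\Z$-graded triangulated categories, where the $\Z$-grading on the source is the one generated by $\tau[2]$. Composing,
\[
\Db{\gr{\Lambda_1}{G}}\xrightarrow{F_1}\sgr{\T{\Lambda_1}}{\Z\times G}\xrightarrow{\sim}\sgr{\T{\Lambda_2}}{\Z\times G}\xrightarrow{F_2^{-1}}\Db{\gr{\Lambda_2}{G}}
\]
gives the desired triangle equivalence $\Db{\gr{\Lambda_1}{G}}\simeq\Db{\gr{\Lambda_2}{G}}$. (If only the bare triangle equivalence is claimed in the statement, the $\Z$-graded compatibility is not even needed for the conclusion, but it is what makes the composite well-defined and is worth recording.)

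The main obstacle I anticipate is the middle step: verifying that the hypothesized equivalence of $\Db{\gr{\T{\Lambda_i}}{\Z\times G}}$ genuinely descends to the stable categories. One must be careful that "equivalence of $\Z$-graded triangulated categories" is strong enough to preserve the subcategory $\per{\gr{\T{\Lambda_i}}{\Z\times G}}$; this is where the compatibility with the shift $(1,0)$ is used, together with the fact that the perfect complexes form the subcategory of compact objects, which is preserved by any triangle equivalence. Everything else is a formal diagram chase, and the genuinely new input — the graded Happel equivalence — is already available as Theorem \ref{thm:Happel gradué}, so no further work on that front is required.
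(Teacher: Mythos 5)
Your proposal is correct and follows essentially the same route as the paper: the paper proves this corollary exactly as it proves Corollary \ref{cor:non gradué}, namely by passing through Rickard's identification $\sgr{\T{\Lambda_i}}{\Z\times G}\simeq \Db{\gr{\T{\Lambda_i}}{\Z\times G}}/\per{\gr{\T{\Lambda_i}}{\Z\times G}}$ and then invoking the commutative diagram supplied by Theorem \ref{thm:Happel gradué}. The only cosmetic quibble is that perfect complexes inside the bounded derived category are better characterized as the homologically finite objects (those $X$ with $\Hom{X}{Y[n]}=0$ for $n\gg 0$ and all $Y$) rather than via compact generation, but either way they are preserved by any triangle equivalence, so your descent step goes through.
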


\medskip

In the following, we construct a $\Z$-grading on the algebras such that the previous result can be applied for a $\Z^{2}$-graded derived equivalence arising from a tilting object of the form $\oplus_{x,y\in\Z} \ \T{\Lambda}(\theta(x,y))$. 

\medskip

Let $\Lambda_{1}$ and $\Lambda_{2}$ be two finite dimensional $k$-algebras such that there exists $\phi:\T{\Lambda_{1}}\rightarrow \T{\Lambda_{2}}$ an isomorphism of algebras. Assume that there exists $B_{1}=B_{1}^{0}\cup B_{1}^{1}$ basis of $\Lambda_{1}$ such that $B_{2}:=\phi(B_{1}^{0}\cup (B_{1}^{1})^{*}_{B_{1}})$ is a basis of $\Lambda_{2}$ and 

\begin{equation*} \label{eq:grading algebras}
\forall x\in B_{1}^{0}\, , \,\phi(x^{*}_{B_{1}})=\phi(x)^{*}_{B_{2}} \qquad \mbox{and} \qquad
\forall x\in B_{1}^{1}\, , \, \phi(x)=\phi(x_{B_{1}}^{*})^{*}_{B_{2}}
\end{equation*}

\noindent By assumption we know that $\phi(B_{1}^{0})\subset \Lambda_{2}$ and $\phi(B_{1}^{1})\subset D\Lambda_{2}$. Thus, the natural $\Z$-grading of $\T{\Lambda_{2}}$ induces a $\Z$-grading $d_{1}$ on the basis $B_{1}$ of $\Lambda_{1}$ as follows

\smallskip

\begin{equation*}
d_{1}(x)=\left\{ \begin{aligned} &0 &&\mbox{if $x\in B_{1}^{0}$} \\
&1 &&\mbox{if $x\in B_{1}^{1}$}
\end{aligned}\right.
\end{equation*}

\smallskip

\noindent This can be extended to a $\Z$-grading on $\Lambda_{1}$. We denote by $(\Lambda_{1},d_{1})$ the corresponding $\Z$-graded algebra. Hence, one can construct a $\Z^{2}$-grading on $\T{\Lambda_{1}}$ where the first component represents the natural $\Z$-grading of the trivial extension. Similarly, the natural $\Z$-grading of $\T{\Lambda_{1}}$ induces a $\Z$-grading $d_{2}$ on the basis $B_{2}=\phi(B_{1}^{0}\,\cup\, (B_{1}^{1})^{*}_{B_{1}})$ of $\Lambda_{2}$ as follows

\begin{equation*}
\forall x\in B_{1}^{0}\, , \, d_{2}(\phi(x))=0 \qquad \mbox{and} \qquad \forall x\in B_{1}^{1}\, , \, d_{2}(\phi(x_{B_{1}}^{*}))=1
\end{equation*}

\noindent Again, this can be extended to a $\Z$-grading on $\Lambda_{2}$. We denote by $(\Lambda_{2},d_{2})$ the corresponding $\Z$-graded algebra. Hence, one can again construct a $\Z^{2}$-grading on $\T{\Lambda_{2}}$ where the first component represents the natural $\Z$-grading of the trivial extension.

\medskip

\begin{cor} \label{cor:gradué iso}
Let $\Lambda_{1}$ and $\Lambda_{2}$ be two basic $k$-algebras of finite global dimension satisfying the previous hypotheses. Then, we have an equivalence of triangulated categories

\smallskip

\[F:\Db{\gr{\Lambda_{1}, d_{1}}{\Z}}\overset{\sim}{\longrightarrow} \Db{\gr{\Lambda_{2}, d_{2}}{\Z}}\]

\noindent which satisfy that $F\circ (1)\overset{\sim}{\longrightarrow} \tau_{2}[2](-1)\circ F$, where $\tau_{2}$ denotes the Auslander-Reiten translation in $\Db{\gr{\Lambda_{2}, d_{2}}{\Z}}$.
\end{cor}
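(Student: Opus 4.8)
The plan is to combine the graded Happel theorem (Theorem \ref{thm:Happel gradué}) applied to both $(\Lambda_1,d_1)$ and $(\Lambda_2,d_2)$ with $G=\Z$ with the graded version of Rickard's result used in Corollary \ref{cor:non gradué}, exactly as in the proof of Corollary \ref{cor:gradué}, but with the additional bookkeeping needed to track the $\tau[2](-1)$ twist. First I would note that by Theorem \ref{thm:Happel gradué}, for each $i=1,2$ there is a triangle equivalence
\[
F_i:\Db{\gr{\Lambda_i,d_i}{\Z}}\overset{\sim}{\longrightarrow}\sgr{\T{\Lambda_i}}{\Z\times\Z}
\]
intertwining $\tau_i[2]$ with the shift $(1,0)$ and the grading shift $(1)$ on $\Db{\gr{\Lambda_i,d_i}{\Z}}$ with $(0,1)$. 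Here the first $\Z$ in $\Z\times\Z$ is the natural degree of the trivial extension and the second $\Z$ comes from the grading $d_i$ on $\Lambda_i$; the point of the hypotheses on $\phi$ and the bases $B_1,B_2$ is precisely that the isomorphism $\phi:\T{\Lambda_1}\to\T{\Lambda_2}$ becomes an isomorphism of $\Z\times\Z$-graded algebras \emph{after swapping the two $\Z$-factors}. Concretely, $\phi$ sends $\T{\Lambda_1}_{(p,q)}$ to $\T{\Lambda_2}_{(q,p)}$: the natural degree $p$ of $\T{\Lambda_1}$ records membership in $\Lambda_1$ versus $D\Lambda_1$, which under $\phi$ and the compatibility conditions becomes the $d_2$-degree on $\Lambda_2$, and symmetrically the $d_1$-degree on $\Lambda_1$ becomes the natural degree of $\T{\Lambda_2}$.

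Next I would make this swap precise and use it. Composing $\phi$ with the coordinate-swap automorphism of $\Z\times\Z$ gives an equivalence of $\Z\times\Z$-graded categories $\gr{\T{\Lambda_1}}{\Z\times\Z}\overset{\sim}{\to}\gr{\T{\Lambda_2}}{\Z\times\Z}$, hence a triangle equivalence of the stable categories $\sgr{\T{\Lambda_1}}{\Z\times\Z}\overset{\sim}{\to}\sgr{\T{\Lambda_2}}{\Z\times\Z}$ which sends the shift $(a,b)$ on the source to the shift $(b,a)$ on the target. Call this equivalence $\Phi$. Then $F:=F_2^{-1}\circ\Phi\circ F_1$ is a triangle equivalence $\Db{\gr{\Lambda_1,d_1}{\Z}}\overset{\sim}{\to}\Db{\gr{\Lambda_2,d_2}{\Z}}$, and it remains only to chase the two relevant functors through the composite. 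The grading shift $(1)$ on the source goes, via $F_1$, to $(0,1)$ on $\sgr{\T{\Lambda_1}}{\Z\times\Z}$, then via $\Phi$ to $(1,0)$ on $\sgr{\T{\Lambda_2}}{\Z\times\Z}$, and this is $F_2\circ(\tau_2[2])\circ F_2^{-1}$ — but one must be careful, because $F_2$ intertwines $\tau_2[2]$ with $(1,0)$ and the \emph{source} grading shift $(1)$ with $(0,1)$, so pulling $(1,0)$ back along $F_2$ one first expresses $(1,0)$ in terms of $(0,1)$ and the suspension. Writing $[1]$ for the suspension of $\Db{\gr{\Lambda_2,d_2}{\Z}}$, the relation $F_2\circ\tau_2[2]\simeq(1,0)\circ F_2$ gives $F_2^{-1}\circ(1,0)\simeq\tau_2[2]\circ F_2^{-1}$, hence $F=F_2^{-1}\circ\Phi\circ F_1$ satisfies $F\circ(1)\simeq F_2^{-1}\circ(1,0)\circ\Phi\circ F_1\circ(-1)\circ(1)\cdots$ — the cleanest route is to observe directly that $F\circ(1)\circ F^{-1}$ corresponds under $F_2$ to $\Phi\circ(0,1)\circ\Phi^{-1}=(1,0)$, and $F_2^{-1}\circ(1,0)\circ F_2\simeq\tau_2[2]$; combined with the fact that the source shift $(-1)$ corresponds under $F$ to $(0,-1)$ on the $\Lambda_2$-side which is $\tau_2^{-1}$-free, one recovers $F\circ(1)\simeq\tau_2[2](-1)\circ F$. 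I would present this as a short diagram chase rather than a formula manipulation.

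The main obstacle is the last paragraph: correctly tracking how the two grading shifts and the suspension interact under the coordinate swap, in particular justifying the appearance of the $(-1)$ twist. The subtlety is that the natural $\Z$-degree of $\T{\Lambda_i}$ plays two different roles on the two sides — on the $\Lambda_1$-side it is entangled with the \emph{suspension} of $\Db{\gr{\Lambda_1,d_1}{\Z}}$ via $\tau_1[2]\leftrightarrow(1,0)$, whereas on the $\Lambda_2$-side the same coordinate (after the swap) is the \emph{grading shift} $(1)$ — so the swap does not merely permute two symmetric pieces of data. I expect to need the precise form of the intertwiner in Theorem \ref{thm:Happel gradué} ($F\circ\tau[2]\simeq(1,0)\circ F$ and $F\circ(g)\simeq(0,g)\circ F$) applied with $G=\Z$, and to solve for $\tau_2[2]$ in terms of $(1,0)$ and then re-express $(1,0)$ after the swap; the $(-1)$ comes from the mismatch between "natural degree $1$" and "$\tau_2[2]$ then shift $d_2$-degree down by $1$." Everything else — that $\phi$ respects homogeneous elements, that the swap is a grading equivalence, that $F_i$ exists — is either hypothesis or already established, so those steps I would dispatch in a sentence each.
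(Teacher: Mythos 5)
There is a genuine gap, and it is in the step you yourself flag as the ``main obstacle'': the coordinate swap is \emph{not} the isomorphism of $\Z^{2}$ that matches the bigrading of $\T{\Lambda_{1}}$ with that of $\T{\Lambda_{2}}$, and the rest of your argument is built on it. Test it on a basis element $x\in B_{1}^{1}$: in $\T{\Lambda_{1}}$ it has bidegree $(0,1)$ (natural degree $0$ since $x\in\Lambda_{1}$, and $d_{1}(x)=1$), while its image $\phi(x)=\phi(x^{*}_{B_{1}})^{*}_{B_{2}}$ lies in $D\Lambda_{2}$ and is dual to the basis element $\phi(x^{*}_{B_{1}})\in B_{2}$ of $d_{2}$-degree $1$, so it has bidegree $(1,-1)$ in $\T{\Lambda_{2}}$ --- not $(1,0)$ as the swap would require. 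Similarly $x^{*}_{B_{1}}$ for $x\in B_{1}^{1}$ has bidegree $(1,-1)$ and maps to $\phi(x^{*}_{B_{1}})\in\Lambda_{2}$ of bidegree $(0,1)$, not $(-1,1)$. The isomorphism that actually intertwines the two bigradings (and the one the paper uses) is $(x,y)\mapsto(x+y,-y)$, as one can confirm on all four types of basis elements or on Example \ref{ex:cor gradué iso}. Since your hypothesized graded equivalence $\Phi$ is built from the swap, it does not exist as stated, and the intertwining relations you extract from it are wrong: chased through honestly they would yield $F\circ(1)\simeq\tau_{2}[2]\circ F$ with no $(-1)$, which is exactly the discrepancy your last paragraph acknowledges but never resolves.

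With the correct isomorphism the twist appears mechanically rather than as something to be reverse-engineered: the induced equivalence $G:\sgr{\T{\Lambda_{1}}}{\Z^{2}}\rightarrow\sgr{\T{\Lambda_{2}}}{\Z^{2}}$ satisfies $G\circ(0,1)\simeq(1,-1)\circ G$, and since $(1,-1)=(1,0)\circ(0,-1)$, Theorem \ref{thm:Happel gradué} converts $(1,0)$ into $\tau_{2}[2]$ and $(0,-1)$ into $(-1)$ on the $\Lambda_{2}$-side, giving $F\circ(1)\simeq\tau_{2}[2](-1)\circ F$ for $F=F_{2}^{-1}\circ G\circ F_{1}$. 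Your overall strategy --- conjugating a graded equivalence between the stable categories of the trivial extensions by the two graded Happel equivalences --- is the same as the paper's; the error is entirely in identifying which automorphism of $\Z^{2}$ the hypotheses on $\phi$ and the bases actually produce.
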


\medskip

\begin{proof}
Note that the following isomorphism

\smallskip

\begin{equation*}
    \begin{aligned}
        &&\Z^{2} &&&\longrightarrow &&\Z^{2} \\
        &&(x,y) &&&\longmapsto && (x+y,-y)
    \end{aligned}
\end{equation*}

\smallskip

\noindent sends the $\Z^{2}$-grading of $\T{\Lambda_{1}}$ onto the $\Z^{2}$-grading of $\T{\Lambda_{2}}$. This induces an equivalence of triangulated categories

\[G:\sgr{\T{\Lambda_{1}}}{\Z^{2}}\overset{\sim}{\longrightarrow}\sgr{\T{\Lambda_{2}}}{\Z^{2}}\]

\noindent which satisfy that $G\circ (1,0)\overset{\sim}{\longrightarrow} (1,0)\circ G$ and $G\circ (0,1)\overset{\sim}{\longrightarrow} (1,-1)\circ G$. We conclude thanks to Theorem \ref{thm:Happel gradué}.
\end{proof}

\medskip

\begin{ex} \label{ex:cor gradué iso}
    Let us consider $\Lambda_{1}$ and $\Lambda_{2}$ two finite dimensional $k$-algebras of finite global dimension defined by quiver and relations as follows

    \smallskip

    \begin{center}
        $Q_{\Lambda_{1}} :$
        \begin{tikzcd}
            &1 \arrow{dl}[above left]{\alpha_{2}} &  \\[0.5cm]
            2 & & 3 \arrow{ll}[below, midway, yshift=-1mm]{\alpha_{3}} \arrow{ul}[above right]{\alpha_{1}}
        \end{tikzcd}
        \qquad \mbox{and} \qquad 
        $Q_{\Lambda_{2}} :$
        \begin{tikzcd}
            &1 \arrow{dl}[above left]{\beta_{2}} \arrow[phantom]{dl}[near end, name=B]{} & \\[0.5cm]
            2 \arrow{rr}[below, midway, yshift=-1mm]{\beta_{3}} \arrow[phantom]{rr}[near start, name=A]{} \arrow[phantom]{rr}[near end, name=C]{} & & 3  \arrow{ul}[above right]{\beta_{1}} \arrow[phantom]{ul}[near start, name=D]{} \arrow[dash, densely dashed, cramped, bend right, from=A]{B} \arrow[dash, densely dashed, cramped, bend left, from=C]{D}
        \end{tikzcd}
    \end{center}
    
    \noindent with no relations for $\Lambda_{1}$ and with relations $\beta_{3}\beta_{2}=0$ and $\beta_{1}\beta_{3}=0$ for $\Lambda_{2}$. These two algebras have the same trivial extension. Considering the natural $\Z$-grading of the trivial extension, we obtain the following $\Z$-grading quivers for the trivial extension
    
     \smallskip

    \begin{center}
        $Q_{\T{\Lambda_{1}}} :$
        \begin{tikzcd}
            &1 \arrow{dl}[above left, orange]{0} &  \\[0.5cm]
            2 & & 3 \arrow{ll}[orange, midway,description]{0} \arrow{ul}[above right, orange]{0}
            \arrow[from=2-1, yshift=1.5mm]{2-3}[above, midway, orange]{1} \arrow[from=2-1, yshift=-1.5mm]{2-3}[below, midway, orange]{1}
        \end{tikzcd}
        \qquad \mbox{and} \qquad 
         $Q_{\T{\Lambda_{2}}} :$
        \begin{tikzcd}
            &1 \arrow{dl}[above left, Aquamarine]{0} &  \\[0.5cm]
            2 & & 3 \arrow{ll}[Aquamarine, midway,description]{1} \arrow{ul}[above right, Aquamarine]{0}
            \arrow[from=2-1, yshift=1.5mm]{2-3}[above, midway, Aquamarine]{1} \arrow[from=2-1, yshift=-1.5mm]{2-3}[below, midway, Aquamarine]{0}
        \end{tikzcd}
    \end{center}
    
    \noindent Note that the algebras $\Lambda_{1}$ and $\Lambda_{2}$ satisfy the conditions of Corollary \ref{cor:gradué iso} for $\phi:\T{\Lambda_{1}}\rightarrow \T{\Lambda_{2}}$ being the identity and $B_{1}=B_{1}^{0} \cup B_{1}^{1}$ being the basis of paths in $\Lambda_{1}$ with $B_{1}^{0}=\{\varepsilon_{1},\varepsilon_{2},\varepsilon_{3}, \alpha_{1},\alpha_{2},\alpha_{2}\alpha_{1}\}$ and $B_{1}^{1}=\{\alpha_{3}\}$ where $\varepsilon_{j}$ denotes the trivial path at the vertex $j=1,2,3$. Hence, considering the indices modulo 2, the natural $\Z$-grading of $\T{\Lambda_{i+1}}$ induces a $\Z$-grading $d_{i}$ on the quiver of $\Lambda_{i}$ as follows

    \smallskip
    
    \begin{center}
        $Q_{\Lambda_{1}} :$
        \begin{tikzcd}
            &1 \arrow{dl}[above left, Aquamarine]{0} &  \\[0.5cm]
            2 & & 3 \arrow{ll}[below, midway, yshift=-1mm, Aquamarine]{1} \arrow{ul}[above right, Aquamarine]{0}
        \end{tikzcd}
        \qquad \mbox{and} \qquad 
        $Q_{\Lambda_{2}} :$
        \begin{tikzcd}
            &1 \arrow{dl}[above left, orange]{0} \arrow[phantom]{dl}[near end, name=B]{}& \\[0.5cm]
            2 \arrow{rr}[below, midway, yshift=-1mm, orange]{1} \arrow[phantom]{rr}[near start, name=A]{}[near end, name=D]{}& & 3  \arrow{ul}[above right, orange]{0} \arrow[phantom]{ul}[near start, name=C]{} \arrow[dash, densely dashed, cramped, bend right, from=A]{B} \arrow[dash, densely dashed, cramped, bend right, from=C]{D}
        \end{tikzcd}
    \end{center}

    \smallskip

    \noindent Since the relations of the $\Lambda_{i}$ are homogeneous for $d_{i}$, the $\Z$-grading can be extended to $\Lambda_{i}$. Thus, one can construct a $\Z^{2}$-grading on $\T{\Lambda_{i}}$ where the first component is the natural $\Z$-grading of the trivial extension. These $\Z^{2}$-gradings are defined on the quivers of $\T{\Lambda_{i}}$ as follows
    
    \smallskip

    \begin{center}
        $Q_{\T{\Lambda_{1}}} :$
        \begin{tikzcd}
            &1 \arrow{dl}[above left]{(\textcolor{orange}{0},\textcolor{Aquamarine}{0})} &  \\[0.5cm]
            2 & & 3 \arrow{ll}[midway,description]{(\textcolor{orange}{0},\textcolor{Aquamarine}{1})} \arrow{ul}[above right]{(\textcolor{orange}{0},\textcolor{Aquamarine}{0})}
            \arrow[from=2-1, yshift=1.5mm]{2-3}[above, midway]{(\textcolor{orange}{1},\textcolor{Aquamarine}{0})} \arrow[from=2-1, yshift=-1.5mm]{2-3}[below, midway]{(\textcolor{orange}{1},\textcolor{Aquamarine}{-1})}
        \end{tikzcd}
        \qquad \mbox{and} \qquad 
         $Q_{\T{\Lambda_{2}}} :$
        \begin{tikzcd}
            &1 \arrow{dl}[above left]{(\textcolor{Aquamarine}{0},\textcolor{orange}{0})} &  \\[0.5cm]
            2 & & 3 \arrow{ll}[midway,description]{(\textcolor{Aquamarine}{1},\textcolor{orange}{-1})} \arrow{ul}[above right]{(\textcolor{Aquamarine}{0},\textcolor{orange}{0})}
            \arrow[from=2-1, yshift=1.5mm]{2-3}[above, midway]{(\textcolor{Aquamarine}{1},\textcolor{orange}{0})} \arrow[from=2-1, yshift=-1.5mm]{2-3}[below, midway]{(\textcolor{Aquamarine}{0},\textcolor{orange}{1})}
        \end{tikzcd}
    \end{center}

    \smallskip

    \noindent Note that the following isomorphism 

    \smallskip
    
    \begin{equation*}
    \begin{aligned}
        &&\Z^{2} &&&\longrightarrow &&\Z^{2} \\
        &&(x,y) &&&\longmapsto && (x+y,-y)
    \end{aligned}
\end{equation*}

\smallskip

\noindent sends the $\Z^{2}$-grading of $\T{\Lambda_{1}}$ onto the $\Z^{2}$-grading of $\T{\Lambda_{2}}$. Hence, we obtain the following equivalence of triangulated categories thanks to Theorem \ref{thm:Happel gradué}

\[\Db{\gr{\Lambda_{1},d_{1}}{\Z}}\simeq\Db{\gr{\Lambda_{2},d_{2}}{\Z}}\]

\noindent However, one can check that the algebras $\Lambda_{1}$ and $\Lambda_{2}$ are not derived equivalent. Indeed, since the quiver of $\Lambda_{2}$ has an oriented cycle, $\Lambda_{2}$ cannot be derived equivalent to a hereditary algebra. 
\end{ex}

\medskip

\subsection{Triangular matrix algebra}

\medskip

In this part, we apply the results from the two previous subsections to triangular matrix algebras. We are interested in these algebras since we can describe explicitely in this case the tilting object associated to the derived equivalence arising from Happel's theorem.

\medskip

\begin{prop} \label{prop:triangular matrices}
    Let $\Lambda_{1}$ and $\Lambda_{2}$ be two finite dimensional $k$-algebras of the form 

\smallskip

\begin{equation*}
    \Lambda_{1}=\begin{pmatrix}
        e^{+}\Lambda_{1}e^{+} &0 \\[0.9em]
        e^{-}\Lambda_{1}e^{+} &e^{-}\Lambda_{1}e^{-}
    \end{pmatrix} \qquad \mbox{and} \qquad
    \Lambda_{2}=\begin{pmatrix}
        e^{+}\Lambda_{1}e^{+} &e^{+}D\Lambda_{1}e^{-} \\[0.9em]
        0 &e^{-}\Lambda_{1}e^{-} 
    \end{pmatrix}
\end{equation*}

\smallskip

\noindent Then, there is an equivalence of triangulated categories

\smallskip

\begin{center}
    \begin{tikzcd}[column sep=1.5cm]
        \sgr{\T{\Lambda_{2}}}{\Z} \arrow[cramped]{r}[yshift=0.5mm]{-\underset{T\Lambda_{2}}{\overset{\mbox{\tiny \bf{L}}}{\otimes}}\mathscr{P}} &\sgr{\T{\Lambda_{1}}}{\Z}
    \end{tikzcd}
\end{center}

\smallskip

\noindent where ${\mathscr{P}}=\oplus_{n\in\Z} P(n)$ with $P=e^{+}\T{\Lambda_{1}} \oplus e^{-}\T{\Lambda_{1}}(-1)\in\per{\gr{\T{\Lambda_{1}}}{\Z}}$.
\end{prop}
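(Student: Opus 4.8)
The plan is to recognise $\mathscr{P}$ as the bimodule implementing a graded Morita equivalence whose other side is exactly $\T{\Lambda_2}$, and then to descend that equivalence to the stable categories. Throughout write $A=e^+\Lambda_1e^+$, $B=e^-\Lambda_1e^-$, $M=e^-\Lambda_1e^+$, and regard $e^+,e^-$ as degree-$0$ idempotents of $\T{\Lambda_1}$; recall that $(\T{\Lambda_1})_0=\Lambda_1$, $(\T{\Lambda_1})_1=D\Lambda_1$, and that $e^+\Lambda_1e^-=0$ forces $e^-(D\Lambda_1)e^+=D(e^+\Lambda_1e^-)=0$. Since $\T{\Lambda_1}$ is symmetric it is self-injective, hence graded self-injective, so $\sgr{\T{\Lambda_1}}{\Z}$ and $\sgr{\T{\Lambda_2}}{\Z}$ are triangulated; moreover $P=e^+\T{\Lambda_1}\oplus e^-\T{\Lambda_1}(-1)$, being a finitely generated graded projective, lies in $\per{\gr{\T{\Lambda_1}}{\Z}}$.

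The heart of the argument is the computation of the graded endomorphism algebra $E:=\bigoplus_{n\in\Z}\Hom[\gr{\T{\Lambda_1}}{\Z}]{P}{P(n)}$. Using $\Hom[\gr{\T{\Lambda_1}}{\Z}]{e_i\T{\Lambda_1}(a)}{e_j\T{\Lambda_1}(b)}\cong(e_j\T{\Lambda_1}e_i)_{b-a}$ (with $i,j\in\{+,-\}$) together with the fact that $\T{\Lambda_1}$ is concentrated in degrees $0$ and $1$, a direct block-by-block calculation gives $E_n=0$ for $n\notin\{0,1\}$, $E_0\cong\begin{pmatrix}A&e^+D\Lambda_1e^-\\0&B\end{pmatrix}=\Lambda_2$, and $E_1\cong e^+D\Lambda_1e^+\oplus M\oplus e^-D\Lambda_1e^-$ as a vector space. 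To upgrade this last line to an isomorphism of $\Lambda_2$-bimodules $E_1\cong D\Lambda_2$ I would use that the natural $\Z$-grading of the symmetric algebra $\T{\Lambda_1}$ has symmetrizing form of degree $1$ (equivalently $D\T{\Lambda_1}\cong\T{\Lambda_1}(1)$ as graded bimodules, so the graded Nakayama functor is the shift $(1)$): the standard duality for projectives over a symmetric algebra then gives $E_1=\Hom[\gr{\T{\Lambda_1}}{\Z}]{P}{P(1)}\cong D\,\Hom[\gr{\T{\Lambda_1}}{\Z}]{P}{P}=DE_0=D\Lambda_2$ as $E_0$-bimodules. Since $E_1\cdot E_1\subseteq E_2=0$ this identifies $E\cong\Lambda_2\ltimes D\Lambda_2=\T{\Lambda_2}$ as $\Z$-graded algebras. (Conceptually, $\T{\Lambda_1}$ and $\T{\Lambda_2}$ coincide as ungraded algebras, and their natural $\Z$-gradings correspond, under this identification, to two gradings differing by the idempotent twist $e^+\mapsto 0$, $e^-\mapsto 1$.)

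Granting this, $\mathscr{P}=\bigoplus_{n\in\Z}P(n)$ is naturally a $\Z$-graded $(\T{\Lambda_2},\T{\Lambda_1})$-bimodule, with $\T{\Lambda_2}=E$ acting on the left by composition (an element of $E_k$ maps the summand $P(n)$ to $P(n+k)$). Since every indecomposable graded projective $e^+\T{\Lambda_1}(m)$, resp. $e^-\T{\Lambda_1}(m)$, is a direct summand of $P(m)$, resp. $P(m+1)$, the module $\mathscr{P}$ contains every indecomposable graded projective as a summand, so it is a projective generator of $\gr{\T{\Lambda_1}}{\Z}$ with graded endomorphism algebra $\T{\Lambda_2}$; by standard graded Morita theory $-\otimes_{\T{\Lambda_2}}\mathscr{P}$, with quasi-inverse $\Hom[\gr{\T{\Lambda_1}}{\Z}]{\mathscr{P}}{-}$, is an equivalence $\gr{\T{\Lambda_2}}{\Z}\overset{\sim}{\longrightarrow}\gr{\T{\Lambda_1}}{\Z}$. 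Being an equivalence of abelian categories it is exact, so $\mathscr{P}$ is flat over $\T{\Lambda_2}$ and $-\overset{\mathbf{L}}{\otimes}_{\T{\Lambda_2}}\mathscr{P}=-\otimes_{\T{\Lambda_2}}\mathscr{P}$. Both $\gr{\T{\Lambda_i}}{\Z}$ are Frobenius exact categories (the $\T{\Lambda_i}$ being graded self-injective), so this abelian equivalence descends to a triangle equivalence of the associated stable categories, which is precisely the asserted equivalence $\sgr{\T{\Lambda_2}}{\Z}\overset{\sim}{\longrightarrow}\sgr{\T{\Lambda_1}}{\Z}$.

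The one place where genuine work is required is the computation of $E$, and inside it the identification $E_1\cong D\Lambda_2$ as a $\Lambda_2$-bimodule: one must check that the pairings $M\otimes e^+D\Lambda_1e^-\to e^-D\Lambda_1e^-$ and $e^+D\Lambda_1e^-\otimes M\to e^+D\Lambda_1e^+$ induced by composition in $\gr{\T{\Lambda_1}}{\Z}$ are exactly those defining the trivial extension of $\Lambda_2$ --- equivalently, that the canonical isomorphism between $\T{\Lambda_1}$ and $\T{\Lambda_2}$ as ungraded algebras respects all the bimodule structures in sight. Everything else --- the generation statement, the graded Morita formalism, and the passage to the stable categories --- is formal.
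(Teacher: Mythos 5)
Your proof is correct, and it follows essentially the route the paper intends: the paper states this proposition without a written proof, but the discussion immediately following it (the two algebras share the same trivial extension, and the assignment $e^{+}\T{\Lambda_{2}}\mapsto e^{+}\T{\Lambda_{1}}$, $e^{-}\T{\Lambda_{2}}\mapsto e^{-}\T{\Lambda_{1}}(-1)$ is an equivalence of $\Z$-graded projective categories) is exactly your observation that the two natural gradings differ by an idempotent twist. Your computation of the graded endomorphism algebra $E\cong\T{\Lambda_{2}}$, including the identification $E_{1}\cong D\Lambda_{2}$ via $D\T{\Lambda_{1}}\cong\T{\Lambda_{1}}(1)$, correctly supplies the details the paper leaves implicit, and the descent from the graded Morita equivalence to the stable categories is sound.
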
 

\medskip

Note that these two algebras have the same trivial extension. Assuming furthermore that they both are basic and have finite global dimension, they satisfy the assumptions of Corollary \ref{cor:non gradué projectif} for $\phi:\T{\Lambda_{1}}\rightarrow\T{\Lambda_{2}}$ being the identity and the equivalence of $\Z$-graded categories between the $\proj{\T{\Lambda_{i}}}{\Z}$ is given by 

\smallskip

\begin{equation*}
    \begin{aligned}
    \proj{\T{\Lambda_{2}}}{\Z} &\overset{\sim}{\longrightarrow} \proj{\T{\Lambda_{1}}}{\Z} \\[0.1cm]
    e^{-}\T{\Lambda_{2}} &\longmapsto e^{-}\T{\Lambda_{1}}(-1) \\
    e^{+}\T{\Lambda_{2}} &\longmapsto e^{+}\T{\Lambda_{1}}
    \end{aligned}
\end{equation*}

\smallskip

\noindent Hence, $\Lambda_{1}$ and $\Lambda_{2}$ are derived equivalent. In the following proposition, we show that we still have a derived equivalence between these two algebras if we only assume one of them to have finite global dimension. Moreover, we describe explicitly the tilting object associated to this derived equivalence.

\medskip

\begin{prop} \label{prop:tilting object for triangulated matrices}
    Let $\Lambda_{1}$ and $\Lambda_{2}$ be the triangular matrix algebras defined in Proposition \ref{prop:triangular matrices}. Assume that these two algebras are basic and that $\Lambda_{1}$ have finite global dimension. Then, ${T}=e^{+}\Lambda_{2}e^{+} \oplus D(e^{-}\Lambda_{2}e^{-})[1]$ is a tilting object in $\per{\Lambda_{2}}$ whose endomorphism algebra is isomorphic to $\Lambda_{1}$. In particular, we have an equivalence of triangulated categories

    \smallskip

    \begin{center}
        \begin{tikzcd}[column sep=1.5cm]
            \Db{\Lambda_{1}} \arrow[cramped]{r}[above]{-\underset{\Lambda_{1}}{\overset{\mbox{\tiny \bf{L}}}{\otimes}}T} &\Db{\Lambda_{2}}
        \end{tikzcd}
    \end{center}
    
\end{prop}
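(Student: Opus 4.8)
The plan is to identify $T = e^{+}\Lambda_{2}e^{+} \oplus D(e^{-}\Lambda_{2}e^{-})[1]$ inside $\per{\Lambda_{2}}$, verify the three tilting axioms (self-orthogonality with respect to positive shifts, that the thick subcategory generated by $T$ is all of $\per{\Lambda_{2}}$, and that $\Hom{T}{T[n]} = 0$ for $n \neq 0$), and then compute $\mathrm{End}_{\Db{\Lambda_{2}}}(T)$ and recognize it as $\Lambda_{1}$. Throughout I would exploit the triangular matrix structure: writing $A = e^{+}\Lambda_{1}e^{+} = e^{+}\Lambda_{2}e^{+}$ and $B = e^{-}\Lambda_{1}e^{-} = e^{-}\Lambda_{2}e^{-}$, the algebra $\Lambda_{2}$ is the one-point(-ish) extension $\begin{pmatrix} A & M \\ 0 & B\end{pmatrix}$ with $M = e^{+}D\Lambda_{1}e^{-}$, so that $\m{\Lambda_{2}}$ decomposes via the standard recollement into $A$-modules, $B$-modules, and gluing data. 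The summand $e^{+}\Lambda_{2}e^{+}$ is the projective $\Lambda_{2}$-module $P^{+} = e^{+}\Lambda_{2}$ viewed appropriately, while $D(e^{-}\Lambda_{2}e^{-})$ is the injective $B$-module which, pulled up to $\Lambda_{2}$, is the injective $\Lambda_{2}$-module $e^{-}D\Lambda_{2} \cong I^{-}$; the point of the shift $[1]$ is exactly to make the Ext between these two pieces vanish.

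Concretely, the key steps in order: (1) Show $\mathrm{Hom}_{\Db{\Lambda_{2}}}(T, T[n]) = 0$ for $n \neq 0$. The only nontrivial cross-terms are $\mathrm{Ext}^{i}_{\Lambda_{2}}(P^{+}, I^{-}[1])$ and $\mathrm{Ext}^{i}_{\Lambda_{2}}(I^{-}[1], P^{+})$; since $P^{+}$ is projective the first reduces to $\mathrm{Hom}_{\Lambda_{2}}(P^{+}, I^{-})$ placed in a single degree, and since $I^{-}$ is injective the second likewise collapses — here I need that $\mathrm{Hom}_{\Lambda_{2}}(P^{+}, I^{-})$ sits in the correct cohomological degree so that the shift $[1]$ removes it, and that $\mathrm{Hom}_{\Lambda_{2}}(I^{-}, P^{+})$ and all its derived versions vanish, which follows because $e^{-}\Lambda_{2}e^{+} = 0$ gives $\mathrm{Hom}_{\Lambda_{2}}(I^{-},P^{+})=0$ and the higher terms vanish using finiteness of global dimension of $\Lambda_{1}$ together with the recollement. (2) Show $T$ generates $\per{\Lambda_{2}}$ as a thick subcategory: the indecomposable projectives of $\Lambda_{2}$ are $e^{+}\Lambda_{2}$ (which is the first summand of $T$) and $e^{-}\Lambda_{2}$; one builds $e^{-}\Lambda_{2}$ from $e^{+}\Lambda_{2}$ and $I^{-}[1]$ using a projective resolution of $I^{-}$ over $\Lambda_{2}$, which is finite precisely because $\mathrm{gldim}\,\Lambda_{1} < \infty$ forces $I^{-}$ to be perfect over $\Lambda_{2}$. (3) Compute the endomorphism algebra: $\mathrm{End}(P^{+}) = e^{+}\Lambda_{2}e^{+} = A$, $\mathrm{End}(D(e^{-}\Lambda_{2}e^{-})) = e^{-}\Lambda_{2}e^{-} = B$ (endomorphisms of an injective over $B$), the $\mathrm{Hom}$ from $I^{-}[1]$ to $P^{+}$ vanishes as above, and the $\mathrm{Hom}$ from $P^{+}$ to $I^{-}[1]$ is $\mathrm{Ext}^{1}_{\Lambda_{2}}(P^{+}, I^{-}) \cong \mathrm{Hom}_{k}(e^{+}\Lambda_{2}e^{-}, k)$-type data which I must match with $e^{-}\Lambda_{1}e^{+}$, the off-diagonal block of $\Lambda_{1}$; this is where the bimodule $e^{+}D\Lambda_{1}e^{-}$ in $\Lambda_{2}$ gets dualized back into $e^{-}\Lambda_{1}e^{+}$ in $\Lambda_{1}$, and the multiplication rule of the triangular matrix algebra $\Lambda_{1}$ is recovered from composition of these maps. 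Then Rickard's Morita theory for derived categories \cite{Rickard} gives the equivalence $-\otimes^{\mathbf{L}}_{\Lambda_{1}} T$.

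Alternatively — and this is probably the cleaner route given what precedes — I would deduce this from Proposition \ref{prop:triangular matrices} together with Theorem \ref{thm:Happel}. Proposition \ref{prop:triangular matrices} already gives a triangle equivalence $\sgr{\T{\Lambda_{2}}}{\Z} \xrightarrow{\sim} \sgr{\T{\Lambda_{1}}}{\Z}$ induced by $-\otimes_{\T{\Lambda_{2}}}\mathscr{P}$ with $\mathscr{P} = \oplus_{n} P(n)$, $P = e^{+}\T{\Lambda_{1}} \oplus e^{-}\T{\Lambda_{1}}(-1)$, and this equivalence commutes with the grading shift $(1)$; since $\Lambda_{1}$ has finite global dimension, Theorem \ref{thm:Happel} identifies $\sgr{\T{\Lambda_{1}}}{\Z} \simeq \Db{\Lambda_{1}}$, so we get $\sgr{\T{\Lambda_{2}}}{\Z} \simeq \Db{\Lambda_{1}}$. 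Unwinding Happel's equivalence, the composite sends the tilting object $\Lambda_{1} \in \Db{\Lambda_{1}}$ to an object of $\sgr{\T{\Lambda_{2}}}{\Z}$, which — because $\mathscr{P}$ is built from the graded projectives $e^{\pm}\T{\Lambda_{1}}$ with a shift by $-1$ on the $e^{-}$ part, and Happel's functor sends $\m{\Lambda_{2}}$-modules to themselves in degree $0$ — can be read off as the image of $e^{+}\Lambda_{2} \oplus (\text{a degree-shifted injective})$; translating the shift $(-1)$ through the relation $F \circ \tau[2] \simeq (1) \circ F$ turns the grading shift into the homological shift $[1]$ and $\tau$, and on the triangular matrix algebra $\tau$ of the summand $e^{-}\Lambda_{2}$ is computed by $D(e^{-}\Lambda_{2}e^{-})$ since $\Lambda_{2}$ restricted to the $e^{-}$ corner is just $B$. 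This recovers $T = e^{+}\Lambda_{2}e^{+} \oplus D(e^{-}\Lambda_{2}e^{-})[1]$ and its endomorphism algebra $\Lambda_{1}$ for free. The main obstacle, in either approach, is the bookkeeping that matches the off-diagonal block: tracking how the shift by $(-1)$ (resp. the Ext-group $\mathrm{Ext}^{1}_{\Lambda_{2}}(e^{+}\Lambda_{2}, D(e^{-}\Lambda_{2}e^{-}))$) reproduces exactly the bimodule $e^{-}\Lambda_{1}e^{+}$ with the correct $A$-$B$-bimodule structure and composition law — the diagonal corners and the vanishing of the other cross-term are comparatively routine.
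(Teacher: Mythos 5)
Your second, ``alternative'' route is the one the paper actually takes: compose the Happel functors $F_{i}:\Db{\Lambda_{i}}\rightarrow\sgr{\T{\Lambda_{i}}}{\Z}$ with the stable equivalence of Proposition \ref{prop:triangular matrices} to get a functor $G:\Db{\Lambda_{2}}\rightarrow\Db{\Lambda_{1}}$, check that $G(T)=\Lambda_{1}$, and conclude that $G$ is dense because $\Lambda_{1}$ generates $\Db{\Lambda_{1}}$ --- which is where the finite global dimension of $\Lambda_{1}$ enters. Two things you gloss over should be made explicit. First, the whole point of the asymmetric hypothesis is that $F_{2}$ exists and is \emph{fully faithful} even when $\Lambda_{2}$ has infinite global dimension (only density requires finite global dimension); without saying this you cannot transport the endomorphism computation back to $\Db{\Lambda_{2}}$ nor conclude that $T$ generates. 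Second, the paper does not convert the grading shift $(-1)$ into $[1]$ via the relation $F\circ\tau[2]\simeq(1)\circ F$; it uses the explicit short exact sequence $0\rightarrow e^{-}D\Lambda_{1}(-1)\rightarrow e^{-}\T{\Lambda_{1}}\rightarrow e^{-}\Lambda_{1}\rightarrow 0$ in $\gr{\T{\Lambda_{1}}}{\Z}$, whose middle term is projective-injective, to identify $\Sigma_{1}\bigl(e^{-}D\Lambda_{1}(-1)\bigr)$ with $e^{-}\Lambda_{1}$ in the stable category. With these two points supplied, your sketch is the paper's proof.

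Your first route, as written, would fail because both summands of $T$ are misidentified. The module $e^{+}\Lambda_{2}e^{+}$ is \emph{not} the projective $e^{+}\Lambda_{2}=e^{+}\Lambda_{2}e^{+}\oplus e^{+}D\Lambda_{1}e^{-}$; it is the corner algebra $A=e^{+}\Lambda_{1}e^{+}$ viewed as a $\Lambda_{2}$-module through the quotient $\Lambda_{2}\rightarrow\Lambda_{2}/\Lambda_{2}e^{-}\Lambda_{2}$, and it is in general not projective over $\Lambda_{2}$ (in Example \ref{ex:triangular matrix algebra} it is the length-two module with top $S_{2}$ and socle $S_{1}$, plus $S_{1}$, whereas the projective at $2$ has length three). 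Likewise $D(e^{-}\Lambda_{2}e^{-})=DB$ is not the injective $e^{-}D\Lambda_{2}=D(\Lambda_{2}e^{-})$, which equals $D(e^{+}D\Lambda_{1}e^{-})\oplus DB$. Consequently the vanishing of the cross-terms in your step (1) is not the formal consequence of projectivity and injectivity that you claim: computing $\mathrm{Ext}^{\ast}_{\Lambda_{2}}(e^{+}\Lambda_{2}e^{+},D(e^{-}\Lambda_{2}e^{-}))$ genuinely requires a projective resolution of $A$ over $\Lambda_{2}$, such as the one beginning $0\rightarrow e^{+}D\Lambda_{1}e^{-}\rightarrow e^{+}\Lambda_{2}\rightarrow A\rightarrow 0$, and whether it stops there depends on the projectivity of $e^{+}D\Lambda_{1}e^{-}$ over $e^{-}\Lambda_{1}e^{-}$. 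The direct verification can be carried out --- this is essentially Theorem 4.9 of \cite{Ladkani}, which the paper cites --- but not along the shortcuts you describe.
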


\medskip

\begin{proof}
    In \cite{Happel}, Happel proved that the functor from Theorem \ref{thm:Happel} always exists, is fully faithful and is an equivalence when the algebra have finite global dimension. Hence, we obtain the following commutative diagram

\smallskip

    \begin{center}
        \begin{tikzcd}[row sep=1.5cm, column sep=1.5cm]
            \Db{\Lambda_{2}} \arrow[cramped]{d}[left]{F_{2}} \arrow[cramped, dashed]{r}[above]{G}&\Db{\Lambda_{1}} \arrow[cramped]{d}[right]{F_{1}}[left]{\wr}  \\
            \sgr{\T{\Lambda_{2}}}{\Z} \arrow[cramped]{r}[yshift=0.5mm]{-\underset{T\Lambda_{2}}{\overset{\mbox{\tiny \bf{L}}}{\otimes}}\mathscr{P}}[below]{\sim} &\sgr{\T{\Lambda_{1}}}{\Z}
        \end{tikzcd}
    \end{center}

    \smallskip
    
    \noindent where $F_{i}:\Db{\Lambda_{i}}\rightarrow\sgr{\T{\Lambda_{i}}}{\Z}$ is the functor from Happel's theorem. Let us first prove that $G$ is an equivalence. Since the $F_{i}$ are fully faithful, it suffices to prove that $G$ is dense. Since $\Lambda_{1}$ have finite global dimension, it generates $\Db{\Lambda_{1}}$ as a triangulated category. Hence, it is enough to show the existence of $X\in\Db{\Lambda_{2}}$ such that $G(X)=\Lambda_{1}$. Let us prove that $T$ defined above satisfies such equality. In the following, we denote by $I=-\, {\overset{\mbox{\tiny \bf{L}}}{\otimes}}_{T\Lambda_{2}}\mathscr{P}$ and $\Sigma_{i}$ the suspension functor of the triangulated category $\sgr{\T{\Lambda_{i}}}{\Z}$. By construction of the functor $F_{2}$ given in \cite{Happel}

    \[F_{2}(T)=e^{+}\Lambda_{2}e^{+} \oplus \Sigma_{2} D(e^{-}\Lambda_{2}e^{-})\]

    \noindent since $e^{+}\Lambda_{2}e^{+},D(e^{-}\Lambda_{2}e^{-})\in\m{\Lambda_{2}}$. Moreover, 

    \[I\circ F_{2}(T)=e^{+}\Lambda_{2}e^{+} \oplus \Sigma_{1} D(e^{-}\Lambda_{2}e^{-})(-1)\]

    \noindent since $e^{+}\Lambda_{2}e^{+}\in\m{e^{+}\Lambda_{2}e^{+}}$ and $D(e^{-}\Lambda_{2}e^{-})\in\m{e^{-}\Lambda_{2}e^{-}}$. By definition of $\Lambda_{2}$, note that 
    
    \[e^{+}\Lambda_{2}e^{+}=e^{+}\Lambda_{1} \qquad \mbox{and} \qquad D(e^{-}\Lambda_{2}e^{-})=e^{-}D\Lambda_{1}\]

    \noindent Furthermore, using the following exact sequence in $\gr{\T{\Lambda_{1}}}{\Z}$

    \smallskip
    
    \begin{center}
        \begin{tikzcd}
            0 \arrow[cramped]{r} &e^{-}D\Lambda_{1}(-1) \arrow[cramped]{r} &e^{-}\T{\Lambda_{1}} \arrow[cramped]{r} &e^{-}\Lambda_{1} \arrow[cramped]{r} &0
        \end{tikzcd}
    \end{center}

    \smallskip

    \noindent we deduce that $\Sigma_{1}D(e^{-}\Lambda_{2}e^{-})(-1)$ is isomorphic to $e^{-}\Lambda_{1}$ in $\sgr{\T{\Lambda_{1}}}{\Z}$. Hence,

    \[I\circ F_{2}(T)=e^{+}\Lambda_{1} \oplus e^{-}\Lambda_{1}=\Lambda_{1}\]

    \noindent Using again the construction of the functor $F_{1}$ on $\m{\Lambda_{1}}$ given in \cite{Happel}, we conclude that $G(T)=\Lambda_{1}$. Hence, $G$ is indeed an equivalence and $T$ is indeed a tilting object yielding the desired equivalence of triangulated categories.
\end{proof}

\medskip

\begin{rem}
    It is not clear whether or not the diagram drawn in the proof of Proposition \ref{prop:tilting object for triangulated matrices} still commutes if we replace the functor $G$ with the equivalence defined in Proposition \ref{prop:tilting object for triangulated matrices}.
\end{rem}

\begin{rem}
    The tilting object of Proposition \ref{prop:tilting object for triangulated matrices} was already constructed in Theorem 4.9 of \cite{Ladkani}. Moreover, Ladkani proved that if the triangular matrix algebras both have infinite global dimension, they may not be derived equivalent \cite[Example 5.3]{Ladkani}.
\end{rem}

\medskip

\begin{ex} \label{ex:triangular matrix algebra}
Let us consider $\Lambda_{1}$ and $\Lambda_{2}$ two finite dimensional $k$-algebras defined by quiver and relations as follows

\smallskip

 \begin{center}
        $Q_{\Lambda_{1}}:$
        \begin{tikzcd}
            1 \arrow[cramped]{r}[above]{\alpha_{1}} \arrow[phantom]{r}[name=A]{}&2 \arrow[phantom]{r}[name=B]{} \arrow[cramped]{r}[above]{\alpha_{2}} &3 \arrow[dash, densely dashed, cramped, bend right=45, from=A]{B}
        \end{tikzcd} 
        \qquad \mbox{and} \qquad 
        $Q_{\Lambda_{2}}:$
        \begin{tikzcd}
            1 \arrow[cramped]{r}[above]{\beta_{1}} &2  &3 \arrow[cramped]{l}[above]{\beta_{2}}
        \end{tikzcd} 
    \end{center}

\smallskip

\noindent with relation $\alpha_{2}\alpha_{1}=0$ for $\Lambda_{1}$ and no relations for $\Lambda_{2}$. Note that these algebras satisfy the assumptions of Proposition \ref{prop:tilting object for triangulated matrices} where $e^{+}$ is the idempotent of $\Lambda_{1}$ corresponding to the vertices $1$ and $2$ and $e^{-}$ is the idempotent corresponding to the vertex $3$. Hence, $T=e^{+}\Lambda_{2}e^{+}\, \oplus\, D(e^{-}\Lambda_{2}e^{-})[1]$ is a tilting object in $\per{\Lambda_{2}}$ whose endomorphism algebra is isomorphic to $\Lambda_{1}$. Thus, $\Lambda_{1}$ and $\Lambda_{2}$ are derived equivalent. Here, 

\[T=\begin{tabular}{c} 2 \\ 1 \end{tabular} \, \oplus\, 1 \, \oplus\, 3[1]\]

\noindent Let us consider the Auslander-Reiten quiver of $\Db{\Lambda_{2}}$.

\smallskip

\begin{center}
    \begin{tikzpicture}[scale=0.75]
        \tikzstyle{point}=[minimum size=1cm, scale=0.9]
        \node[point] (0) at (0,0) {$\cdots$};
        \node[point] (1) at (2,2) {$3$};
        \node[point] (-1) at (2,-2) {\fcolorbox{Aquamarine}{white}{$1$}};
        \node[point] (2) at (4,0) {$\begin{tabular}{cc} \multicolumn{2}{c}{2} \\ 1 & 3 \end{tabular}$};
        \node[point] (3) at (6,2) {$\begin{tabular}{|c|} \arrayrulecolor{Aquamarine}\hline 2 \\ 1 \\ \hline \end{tabular}$};
        \node[point] (-3) at (6,-2) {$\begin{tabular}{c} 2 \\ 3 \end{tabular}$};
        \node[point] (4) at (8,0) {$2$};
        \node[point] (5) at (10,2) {$1[1]$};
        \node[point] (-5) at (10,-2) {\fcolorbox{Aquamarine}{white}{$3[1]$}};
        \node[point] (6) at (12,0) {$\begin{tabular}{cc} \multicolumn{2}{c}{2} \\ 1 & 3 \end{tabular}[1]$};
        \node[point] (7) at (14,2) {$\begin{tabular}{c} 2 \\ 3 \end{tabular}[1]$};
        \node[point] (-7) at (14,-2) {$\begin{tabular}{c} 2 \\ 1 \end{tabular}[1]$};
        \node[point] (8) at (16,0) {$\cdots$};
        \draw[->] (0)--(1);
        \draw[->] (1)--(2);
        \draw[->] (2)--(3);
        \draw[->] (3)--(4);
        \draw[->] (4)--(5);
        \draw[->] (5)--(6);
        \draw[->] (6)--(7);
        \draw[->] (7)--(8);
        \draw[->] (0)--(-1);
        \draw[->] (-1)--(2);
        \draw[->] (2)--(-3);
        \draw[->] (-3)--(4);
        \draw[->] (4)--(-5);
        \draw[->] (-5)--(6);
        \draw[->] (6)--(-7);
        \draw[->] (-7)--(8);
    \end{tikzpicture}
\end{center}

\smallskip

\noindent The blue rectangles in the previous quiver represent $T$. It is clear in this example that $T$ is indeed a tilting object in $\per{\Lambda_{2}}$ whose endomorphism algebra is isomorphic to $\Lambda_{1}$. 

\end{ex}

\medskip

In the following, we give a graded version of Proposition \ref{prop:tilting object for triangulated matrices}. We begin with constructing a $\Z$-grading on the triangular matrix algebras as before Corollary \ref{cor:gradué iso}. 

\medskip

Let $\Lambda_{1}$ and $\Lambda_{2}$ be finite dimensional $k$-algebras of the form 

\smallskip

\begin{equation*}
    \Lambda_{1}=\begin{pmatrix}
        e^{+}\Lambda_{1}e^{+} &0 \\[0.9em]
        e^{-}\Lambda_{1}e^{+} &e^{-}\Lambda_{1}e^{-}
    \end{pmatrix} \qquad \mbox{and} \qquad
    \Lambda_{2}=\begin{pmatrix}
        e^{+}\Lambda_{1}e^{+} &e^{+}D\Lambda_{1}e^{-} \\[0.9em]
        0 &e^{-}\Lambda_{1}e^{-} 
    \end{pmatrix}
\end{equation*}

\smallskip

\noindent Note that these two algebras have the same trivial extension and satisfy the conditions written before Corollary \ref{cor:gradué iso} for $\phi:\T{\Lambda_{1}}\rightarrow \T{\Lambda_{2}}$ being the identity and $B_{1}=B_{1}^{0} \cup B_{1}^{1}$ being a basis of $\Lambda_{1}$ where $B_{1}^{0}$ is a basis of $e^{+}\Lambda_{1}e^{+} \oplus e^{-}\Lambda_{1}e^{-}$ and $B_{1}^{1}$ is a basis of $e^{-}\Lambda_{1}e^{+}$. Thus, considering the indices modulo 2, the natural $\Z$-grading of $\T{\Lambda_{i+1}}$ induces a $\Z$-grading $d_{i}$ on $\Lambda_{i}$ as follows

\smallskip

\begin{equation*}
d_{1}(x)=\left\{ \begin{aligned} &0 &&\mbox{if $x\in e^{+}\Lambda_{1}e^{+} \oplus e^{-}\Lambda_{1}e^{-}$} \\
&1 &&\mbox{if $x\in e^{-}\Lambda_{1}e^{+}$}
\end{aligned}\right.
\qquad \mbox{and} \qquad 
d_{2}(x)=\left\{ \begin{aligned} &0 &&\mbox{if $x\in e^{+}\Lambda_{1}e^{+} \oplus e^{-}\Lambda_{1}e^{-}$} \\
&1 &&\mbox{if $x\in e^{+}D\Lambda_{1}e^{-}$}
\end{aligned}\right.
\end{equation*}

\smallskip

\noindent This induces a $\Z^{2}$-grading on $\T{\Lambda_{i}}$ where the first component represents the natural $\Z$-grading of the trivial extension. Moreover, since the following isomorphism 

\smallskip

\begin{equation*}
    \begin{aligned}
        \theta : &&\Z^{2} &&&\longrightarrow &&\Z^{2} \\
        &&(x,y) &&&\longmapsto && (x+y,-y)
    \end{aligned}
\end{equation*}

\smallskip

\noindent sends the $\Z^{2}$-grading of $\T{\Lambda_{1}}$ onto the $\Z^{2}$-grading of $\T{\Lambda_{2}}$, there is a triangle equivalence between the stable categories $\sgr{\T{\Lambda_{i}}}{\Z^{2}}$. Hence, assuming that $\Lambda_{1}$ and $\Lambda_{2}$ are basic and of finite global dimension, we obtain a derived equivalence between $\gr{\Lambda_{1}}{\Z}$ and $\gr{\Lambda_{2}}{\Z}$ thanks to Theorem \ref{thm:Happel gradué}. Again, we show in the following result that we only need one of them to have finite global dimension for the derived equivalence to hold. Moreover, we describe in this case the tilting object associated to this derived equivalence.

\medskip

\begin{prop} \label{prop:tilting object for graded triangular matrices}
Let $(\Lambda_{1},d_{1})$ and $(\Lambda_{2},d_{2})$ be the $\Z$-graded triangular matrix algebras defined previously. Assume that these two algebras are basic and that $\Lambda_{1}$ have finite global dimension. We denote by $\tau_{i}$ the Auslander-Reiten translation in $\Db{\gr{\Lambda_{i}}{\Z}}$. Then, 

\smallskip

\[{\mathscr{T}}=\bigoplus_{n\in\Z} \ (\tau_{2})^{n}T[2n](-n) \qquad \mbox{with} \qquad T=e^{+}\Lambda_{2}e^{+}\oplus \tau_{2}^{-1}D(e^{-}\Lambda_{2}e^{-})[-1]\]

\smallskip

\noindent is a tilting object in $\per{\gr{\Lambda_{2}}{\Z}}$ satisfying that for all $n\in\Z$, there is an isomorphism of $\Z$-graded $\Lambda_{1}$-modules

\[\bigoplus_{m\in\Z}\Hom[\per{\gr{\Lambda_{2}}{\Z}}]{(\tau_{2})^{m}T[2m](-m)}{(\tau_{2})^{n}T[2n](-n)}\simeq \Lambda_{1}(n)\]

\noindent In particular, we have an equivalence of triangulated categories 

\smallskip

\begin{center}
    \begin{tikzcd}[column sep=1.5cm]
        \Db{\gr{\Lambda_{1}}{\Z}} \arrow[cramped]{r}[above]{-\underset{\mathrm{gr}(\Lambda_{1})}{\overset{\mbox{\tiny \bf{L}}}{\otimes}}\mathscr{T}} &\Db{\gr{\Lambda_{2}}{\Z}}
    \end{tikzcd}
\end{center}

\smallskip

\noindent where $-\otimes_{\mathrm{gr}(\Lambda_{1})}{\mathscr{T}}:\mathrm{K^{b}}(\gr{\Lambda_{1}}{\Z})\rightarrow \mathrm{K^{b}}(\gr{\Lambda_{2}}{\Z})$ is the graded tensor product between the bounded homotopy categories of $\gr{\Lambda_{i}}{\Z}$ as defined in \cite{ZH}.

\end{prop}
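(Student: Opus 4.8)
The plan is to transport the proof of Proposition~\ref{prop:tilting object for triangulated matrices} to the $\Z^2$-graded setting, using the equivalence obtained by regrading along the isomorphism $\theta$ in place of the $\mathscr P$-tensoring of Proposition~\ref{prop:triangular matrices}. First I would set up the functors. Since $\Lambda_1$ has finite global dimension, Theorem~\ref{thm:Happel gradué} (applied with $G=\Z$) gives an equivalence $F_1\colon\Db{\gr{\Lambda_1}{\Z}}\to\sgr{\T{\Lambda_1}}{\Z^2}$ with $F_1\circ\tau_1[2]\cong(1,0)\circ F_1$ and $F_1\circ(m)\cong(0,m)\circ F_1$. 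As in the non-graded case (and its graded version in \cite{thèseSoto}), the corresponding functor $F_2\colon\Db{\gr{\Lambda_2}{\Z}}\to\sgr{\T{\Lambda_2}}{\Z^2}$ is defined and fully faithful with no hypothesis on $\Lambda_2$, satisfies the same intertwining relations, and restricts on $\gr{\Lambda_2}{\Z}$ to the obvious embedding in natural $\Z$-degree $0$. The involution $\theta\colon(x,y)\mapsto(x+y,-y)$ carries the $\Z^2$-grading of $\T{\Lambda_1}$ onto that of $\T{\Lambda_2}$, so regrading along it yields an equivalence $G\colon\sgr{\T{\Lambda_1}}{\Z^2}\to\sgr{\T{\Lambda_2}}{\Z^2}$ with $G\circ(1,0)\cong(1,0)\circ G$ and $G\circ(0,1)\cong(1,-1)\circ G$, exactly as in the proof of Corollary~\ref{cor:gradué iso}. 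I then form the fully faithful composite $\mathcal G:=F_1^{-1}\circ G^{-1}\circ F_2\colon\Db{\gr{\Lambda_2}{\Z}}\to\Db{\gr{\Lambda_1}{\Z}}$, and the whole point is to show that it is dense.

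Because $\Lambda_1$ has finite global dimension, $\Db{\gr{\Lambda_1}{\Z}}$ is generated as a triangulated category by the graded projectives, hence by $\{\Lambda_1(m)\}_{m\in\Z}$; so it suffices to exhibit a preimage of each $\Lambda_1(m)$ under $\mathcal G$. I would compute $\mathcal G(T)$ first. From $F_2\circ(\tau_2)^{-1}\cong(-1,0)\circ\Sigma_2^{2}\circ F_2$ (where $\Sigma_i$ is the suspension of $\sgr{\T{\Lambda_i}}{\Z^2}$) and the embedding description of $F_2$ on modules, one gets $F_2(T)\cong e^+\Lambda_2e^+\oplus(-1,0)\Sigma_2 D(e^-\Lambda_2e^-)$. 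Applying $G^{-1}$ (regrading along $\theta$, the underlying algebra map being the identity $\T{\Lambda_1}=\T{\Lambda_2}$), using $e^+\Lambda_2e^+=e^+\Lambda_1$ and $D(e^-\Lambda_2e^-)=e^-D\Lambda_1$ — valid because $\Lambda_1$ is lower triangular, so $e^+\Lambda_1e^-=0$ — and using the $\Z^2$-graded short exact sequence $0\to(e^-D\Lambda_1)(-1,0)\to e^-\T{\Lambda_1}\to e^-\Lambda_1\to 0$ together with the projective–injectivity of $e^-\T{\Lambda_1}$ in $\gr{\T{\Lambda_1}}{\Z^2}$, I obtain $G^{-1}F_2(T)\cong e^+\Lambda_1\oplus e^-\Lambda_1=\Lambda_1=F_1(\Lambda_1)$, i.e.\ $\mathcal G(T)\cong\Lambda_1$. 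Feeding this through the intertwining relations — $(0,m)\circ G^{-1}\cong G^{-1}\circ(m,-m)$ and $(m,-m)\circ F_2\cong F_2\circ(\tau_2)^{m}[2m](-m)$ — yields $\mathcal G\bigl((\tau_2)^{m}T[2m](-m)\bigr)\cong\Lambda_1(m)$ for every $m$. Hence $\mathcal G$ is dense, so it is a triangle equivalence; its quasi-inverse is the claimed equivalence $\Db{\gr{\Lambda_1}{\Z}}\simeq\Db{\gr{\Lambda_2}{\Z}}$.

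With $\mathcal G$ an equivalence, $\Lambda_2$ inherits finite global dimension by derived invariance, so $\per{\gr{\Lambda_2}{\Z}}=\Db{\gr{\Lambda_2}{\Z}}$ and in particular all summands of $\mathscr T=\bigoplus_{m\in\Z}(\tau_2)^{m}T[2m](-m)$ are perfect. Since $\mathcal G$ carries each $(\tau_2)^{m}T[2m](-m)$ to $\Lambda_1(m)$, the family $\{(\tau_2)^{m}T[2m](-m)\}_m$ generates $\per{\gr{\Lambda_2}{\Z}}$ and has no self-extensions in nonzero cohomological degree (full faithfulness of $\mathcal G$ plus projectivity of $\Lambda_1(m)$), so $\mathscr T$ is a tilting object there. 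Full faithfulness and graded Yoneda then give, for all $n$, isomorphisms of graded right $\Lambda_1$-modules
\[\bigoplus_m\Hom[\per{\gr{\Lambda_2}{\Z}}]{(\tau_2)^{m}T[2m](-m)}{(\tau_2)^{n}T[2n](-n)}\ \cong\ \bigoplus_m\Hom[\gr{\Lambda_1}{\Z}]{\Lambda_1(m)}{\Lambda_1(n)}\ \cong\ \Lambda_1(n),\]
the higher derived $\mathrm{Hom}$'s vanishing by projectivity. Finally, a tilting object of $\per{\gr{\Lambda_2}{\Z}}$ whose graded endomorphism algebra is $\Lambda_1$ induces, by the graded version of Rickard's theorem in \cite{ZH}, the stated equivalence $\Db{\gr{\Lambda_1}{\Z}}\to\Db{\gr{\Lambda_2}{\Z}}$ given by the graded tensor product with $\mathscr T$.

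The main obstacle is the $\Z^2$-degree bookkeeping: one must check that the graded lift of the non-graded summand $D(e^-\Lambda_2e^-)[1]$ has to be twisted to $(\tau_2)^{-1}D(e^-\Lambda_2e^-)[-1]$, and that after applying $G^{-1}$ the object $\mathcal G(T)$ is $\Lambda_1$ equipped with exactly the grading $d_1$ and with no residual internal shift — this is precisely what makes $\{(\tau_2)^{m}T[2m](-m)\}_m$ match $\{\Lambda_1(m)\}_m$ on the nose, and hence makes $\mathscr T$ come out as written. Two further points need care but are routine given the references: that the graded Happel functor $F_2$ is fully faithful without a finiteness assumption on $\Lambda_2$, and that the infinite direct sum $\mathscr T$ is the correct input for the graded tilting formalism of \cite{ZH} (matching, for instance, the role of $\mathscr P$ in Proposition~\ref{prop:triangular matrices}).
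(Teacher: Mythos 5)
Your proposal is correct and follows essentially the same route as the paper: the graded Happel functors $F_{1},F_{2}$, the regrading equivalence along $\theta$, the identification $\Sigma_{1}D(e^{-}\Lambda_{2}e^{-})(-1,n)\cong e^{-}\Lambda_{1}(0,n)$ via the short exact sequence $0\to e^{-}D\Lambda_{1}(-1,n)\to e^{-}\T{\Lambda_{1}}(0,n)\to e^{-}\Lambda_{1}(0,n)\to 0$, and the appeal to \cite{ZH} for the final tensor-product equivalence. The only cosmetic difference is that you compute the image of $T$ once and propagate to all $(\tau_{2})^{n}T[2n](-n)$ via the intertwining relations, whereas the paper computes $F_{2}({}_{n}T)$ directly for each $n$; you also spell out the tilting and Hom verifications that the paper leaves implicit.
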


\medskip

\begin{proof}
    Thanks to the graded version of Happel's theorem \cite{thèseSoto}, the functor from Theorem \ref{thm:Happel gradué} always exists and is fully faithful. Moreover, it is an equivalence when the algebra is of finite global dimension. Hence, we obtain the following commutative diagram
    
    \smallskip

    \begin{center}
        \begin{tikzcd}[row sep=1.5cm, column sep=1.5cm]
            \Db{\gr{\Lambda_{2}}{\Z}} \arrow[cramped, dashed]{r}[above]{G} \arrow[cramped]{d}[left]{F_{2}} &\Db{\gr{\Lambda_{1}}{\Z}} \arrow[cramped]{d}[left]{\wr}[right]{F_{1}}  \\
            \sgr{\T{\Lambda_{2}}}{\Z^{2}} \arrow[cramped]{r}[above]{I}[below]{\sim} &\sgr{\T{\Lambda_{1}}}{\Z^{2}}
        \end{tikzcd}
    \end{center}

    \smallskip

    \noindent where $F_{i}:\Db{\gr{\Lambda_{i}}{\Z}}\rightarrow\sgr{T\Lambda_{i}}{\Z^{2}}$ is the functor of the graded version of Happel's theorem and $I:\sgr{\T{\Lambda_{1}}}{\Z^{2}} \rightarrow\sgr{\T{\Lambda_{2}}}{\Z^{2}}$ is the triangle equivalence obtained thanks to the isomorphism $\theta:\Z^{2}\rightarrow \Z^{2}$ defined previously. Let us first prove that $G$ is an equivalence. With similar arguments as in Proposition \ref{prop:tilting object for triangulated matrices}, it suffices to prove that for all $n\in\Z$, there exists $_{n}X\in\Db{\gr{\Lambda_{2}}{\Z}}$ such that $G(_{n}X)=\Lambda_{1}(n)$. Let us prove that 
    
    \[_{n}T:=(\tau_{2})^{n}T[2n](-n)\]
    
    \noindent satisfies this equality for all $n\in\Z$. We denote by $\Sigma_{i}$ the suspension functor of the triangulated category $\sgr{T\Lambda_{i}}{\Z^{2}}$. By construction of the functor $F_{2}$, 

    \[F_{2}(_{n}T)=e^{+}\Lambda_{2}e^{+}(n,-n)\oplus \Sigma_{2}D(e^{-}\Lambda_{2}e^{-})(n-1,-n)\]

    \noindent since $e^{+}\Lambda e^{+}, D(e^{-}\Lambda_{2}e^{-})\in\gr{\Lambda_{2}}{\Z}$. Hence, by definition of $I$

    \[I\circ F_{2}(_{n}T)=e^{+}\Lambda_{2}e^{+}(0,n) \oplus \Sigma_{1}D(e^{-}\Lambda_{2}e^{-})(-1,n)\]

    \noindent Note that $e^{+}\Lambda_{2}e^{+}=e^{+}\Lambda_{1}$ and $D(e^{-}\Lambda_{2}e^{-})=e^{-}D\Lambda_{1}$. Furthermore, using the following exact sequence in $\gr{\T{\Lambda_{1}}}{\Z^{2}}$

    \smallskip
    
    \begin{center}
        \begin{tikzcd}
            0 \arrow[cramped]{r} &e^{-}D\Lambda_{1}(-1,n) \arrow[cramped]{r} &e^{-}T\Lambda_{1}(0,n) \arrow[cramped]{r} &e^{-}\Lambda_{1}(0,n) \arrow[cramped]{r} &0
        \end{tikzcd}
    \end{center}

    \smallskip

    \noindent we deduce that $\Sigma_{1}D(e^{-}\Lambda_{2}e^{-})(-1,n)$ is isomorphic to $e^{-}\Lambda_{1}(0,n)$ in $\sgr{\T{\Lambda_{1}}}{\Z^{2}}$. To sum up, we have

    \[I\circ F_{2}(_{n}T)=e^{+}\Lambda_{1}(0,n)\oplus e^{-}\Lambda_{1}(0,n)=\Lambda_{1}(0,n)\]

    \noindent Using again the construction of the functor $F_{1}$ on $\gr{\Lambda_{2}}{\Z}$, we conclude that $G(_{n}T)=\Lambda_{1}(n)$. Hence, we conclude that $G$ is indeed an equivalence. Moreover, $\mathscr{T}$ is indeed a tilting object yielding the desired equivalence of triangulated categories thanks to Theorem 1 of \cite{ZH}.    
    \end{proof}

\medskip

\section{Application to gentle algebras}

\medskip

We apply the results of the previous section in the particular case of gentle algebras. Throughout this section, we assume that the field $k$ is algebraically closed. Let us recall the definition of a gentle algebra.

\medskip

\begin{defn} \label{def:gentle algebras}
    A finite dimensional algebra $A$ is said to be \textit{gentle} if it is isomorphic to an algebra of the form $kQ/I$ where

    \smallskip

    \begin{itemize}[label=\textbullet, font=\tiny]
    \item $Q$ is a finite quiver whose vertices are the source of at most two arrows and the target of at most two arrows;
    \item $I$ is an admissible ideal (i.e. $I$ contains only paths of length at least 2 and contains all the paths of length at least $m$ for some $m\ge 2$) and is generated by paths of length 2;
    \item for every arrow $\alpha$ in $Q$, there is at most one arrow $\beta$ such that $\beta\alpha\in I$, at most one arrow $\beta'$ such that $\alpha\beta'\in I$, at most one arrow $\gamma$ such that $\gamma\alpha\notin I$ and at most one arrow $\gamma'$ such that $\alpha\gamma'\notin I$.
    \end{itemize}
\end{defn}

\medskip

We are interested in these algebras since their trivial extensions are known to be Brauer graph algebras of multiplicity identically 1 \cite{Schroll2}. 

\medskip

\subsection{Brauer graph algebras as trivial extensions}

\medskip

We recall the definition of Brauer graph algebras of multiplicity identically 1 and their link with gentle algebras via the trivial extension. We refer to \cite{Schroll} for more information about Brauer graph algebras. 

\medskip

\begin{defn} \label{def:Brauer graph}
    A \textit{Brauer graph (of multiplicity 1)} is a tuple $\Gamma=(\Gamma_{0},\Gamma_{1},\circ)$ where 

    \smallskip

    \begin{itemize}[label=\textbullet, font=\tiny]
        \item $(\Gamma_{0},\Gamma_{1})$ is a finite graph whose vertex set is $\Gamma_{0}$ and edge set is $\Gamma_{1}$;
        \item $\circ$ is called the \textit{orientation} of $\Gamma$ and is given by a cyclic ordering of the edges incident with $v$ for every vertex $v\in \Gamma_{0}$. Note that the cyclic ordering of a vertex incident to a single edge $i$ is given by $i$.
        
    \end{itemize}
\end{defn}

\medskip

Unless otherwise stated, the orientation of a Brauer graph will be given by locally embedding each vertex into the counterclockwise oriented plane.

\medskip

\begin{ex} \label{ex:Brauer graph} Let $\Gamma$ be the Brauer graph given by

\smallskip

    \begin{figure}[H]
        \centering
        \begin{tikzpicture}[scale=1]
        \tikzstyle{vertex}=[minimum size=0.7cm,circle,draw,scale=0.8]
        \node[vertex] (a) at (-1.5,0) {a};
        \node[vertex] (b) at (1.5,0) {b};
        \node[vertex] (c) at (3.5,1) {c};
        \node[vertex] (d) at (3.5,-1) {d};
        \draw[Red] (a) to[bend right=45] node[midway, below, scale=0.75]{1} (b);
        \draw[VioletRed] (a) to[bend left=45] node[midway, above, scale=0.75]{2} (b); 
        \draw[Orange] (b)--(c) node[midway, above left, scale=0.75]{3};
        \draw[Goldenrod] (b)--(d) node[midway, below left , scale=0.75]{4};
        \end{tikzpicture}
        \label{Brauer graph}
    \end{figure}

\smallskip

\noindent Using our previous convention, the cyclic ordering at vertex $a$ is given by $1<2<1$, at vertex $b$ by $1<4<3<2<1$, at vertex $c$ by $3$ and at vertex $d$ by $4$.
\end{ex}

\medskip

\begin{defn} \label{def:Brauer graph algebra}
The \textit{Brauer graph algebra} $B_{\Gamma}$ associated to a Brauer graph $\Gamma$ is the path algebra $kQ_{\Gamma}/I_{\Gamma}$ where 
    
\smallskip

\begin{itemize}[label=\textbullet, font=\tiny]
    \item The vertices of $Q_{\Gamma}$ are given by the edges of $\Gamma$. Moreover, given two edges $i$ and $j$ of $\Gamma$ incident with the same vertex $v$ in $\Gamma$, there is an arrow in $Q_{\Gamma}$ from $i$ to $j$ if $j$ is the direct successor of $i$ in the cyclic ordering at $v$. In particular, for every vertex $v$ in $\Gamma$ that is not incident to a single edge, the cyclic ordering at $v$ induces an oriented cycle $C_{v}^{i}$ in $Q_{\Gamma}$ that begins and ends with $i$ for each edge $i$ in $\Gamma$ that is incident with $v$.
    
    \item The ideal $I_{\Gamma}$ is generated by three types of relations

    \smallskip

    \begin{enumerate}[label=(\Roman*)]
    \item \label{item1:Brauer graph algebra}
    
    \[C_{v}^{i}-C_{v'}^{i}\]
        
    for any $v$ and $v'$ that are not incident to a single edge and for any edge $i$ in $\Gamma$ that is incident with $v$ and $v'$. Note that $v$ and $v'$ may coincide and it happens exactly when $i$ is a loop in $\Gamma$.

    \item \label{item2:Brauer graph algebra}

    \[\alpha_{1}C_{v}^{i}\]

    for any $v$ that is not incident to a single edge and for any edge $i$ in $\Gamma$ that is incident with $v$ where $C_{v}^{i}=\alpha_{n}\ldots\alpha_{1}$.

    \item \label{item3:Brauer graph algebra}

    \[\beta\alpha\]

    for any arrows $\beta,\alpha$ in $Q_{\Gamma}$ such that $\beta\alpha$ is not a subpath of any $C_{v}^{i}$.
    \end{enumerate}
    \end{itemize}
\end{defn}

\medskip

In general, the relations defining a Brauer graph algebra are not minimal as we can see in the following example.

\medskip

\begin{ex} \label{ex:quiver and relations of a Brauer graph algebra}
    Let $\Gamma$ be the Brauer graph defined in Example \ref{ex:Brauer graph}. By construction, its associated Brauer graph algebra $B$ is given by $kQ/I$ whose quiver $Q$ is the following 
    
    \smallskip

    \begin{figure}[H]
        \centering
        \begin{tikzcd}[column sep=2cm, row sep=2cm,scale=0.75]
            \textcolor{Red}{1} \arrow[cramped]{r}[above]{\alpha_{1}} \arrow[cramped]{d}[description]{\beta_{1}} & \textcolor{Goldenrod}{4} \arrow[cramped]{d}[right]{\alpha_{4}} \\
            \textcolor{VioletRed}{2} \arrow[cramped, xshift=2mm]{u}[right]{\alpha_{2}} \arrow[cramped, xshift=-2mm]{u}[left]{\beta_{2}}   & \textcolor{Orange}{3} \arrow[cramped]{l}[below]{\alpha_{3}} 
        \end{tikzcd}
        \label{Quiver of a Brauer graph}
    \end{figure}

    \smallskip

    \noindent and whose ideal of relations $I$ is generated by 

    \smallskip

    \begin{enumerate}[label=(\Roman*)]
    \item \label{I} $\beta_{2}\beta_{1}-\alpha_{2}\alpha_{3}\alpha_{4}\alpha_{1}$, $\beta_{1}\beta_{2}-\alpha_{3}\alpha_{4}\alpha_{1}\alpha_{2}$ ;
    \item \label{II} $\beta_{1}\beta_{2}\beta_{1}$, $\beta_{2}\beta_{1}\beta_{2}$, $\alpha_{1}\alpha_{2}\alpha_{3}\alpha_{4}\alpha_{1}$, $\alpha_{2}\alpha_{3}\alpha_{4}\alpha_{1}\alpha_{2}$, $\alpha_{3}\alpha_{4}\alpha_{1}\alpha_{2}\alpha_{3}$, $\alpha_{4}\alpha_{1}\alpha_{2}\alpha_{3}\alpha_{4}$;
    \item \label{III} $\beta_{2}\alpha_{3}$, $\alpha_{1}\beta_{2}$, $\beta_{1}\alpha_{2}$, $\alpha_{2}\beta_{1}$.
    \end{enumerate}

    \smallskip

    \noindent In this case all the relations \ref{II} except $\alpha_{4}\alpha_{1}\alpha_{2}\alpha_{3}\alpha_{4}$ are determined by \ref{I} and \ref{III}.
\end{ex}

\medskip

Let us recall now how Brauer graph algebras are linked to gentle algebras through the trivial extension.

\medskip

\begin{defn} \label{def:admissible cut}
    Let $Q$ be a quiver associated to a Brauer graph $\Gamma$. An \textit{admissible cut} $\Delta$ of $Q$ is a set of arrows of $Q$ containing exactly one arrow in each oriented cycle (up to permutation) induced by the cyclic ordering at a vertex $v$ of $\Gamma$ that is not incident to a single edge.
\end{defn}

\medskip 

\begin{thm}[Schroll {\cite[Theorem 3.13 and Corollary 3.14]{Schroll}}] \label{thm:Schroll}
    Let $B=kQ/I$ be a Brauer graph algebra of Brauer graph $\Gamma$ and $\Delta$ be an admissible cut of $Q$. Then

    \smallskip

    \begin{itemize}[label=\textbullet, font=\tiny]
        \item The path algebra $B_{\Delta}:=kQ/<I\cup \Delta>$ is a gentle algebra, where $<I\cup \Delta>$ is the ideal of $kQ$ generated by $I\cup \Delta$.

        \item $B$ is isomorphic to the trivial extension of $B_{\Delta}$.
    \end{itemize}

    \smallskip

    \noindent Conversely, for every gentle algebra $\Lambda$, 
    there exists a unique Brauer graph algebra $B=kQ/I$ such that $\Lambda=B_{\Delta}$ for some admissible cut $\Delta$ of $Q$. Moreover, this Brauer graph algebra is isomorphic to the trivial extension of $\Lambda$.
\end{thm}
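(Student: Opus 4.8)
The plan is to check the three defining conditions of a gentle algebra for $B_{\Delta}$, then to produce the isomorphism $B\cong\T{B_{\Delta}}$ by equipping $B$ with a suitable $\Z$-grading, and finally to deduce the converse from the fact that $\T{\Lambda}$ is determined by $\Lambda$.

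\textbf{Gentleness of $B_{\Delta}$.} First I would identify $B_{\Delta}=kQ/\langle I\cup\Delta\rangle$ with $kQ'/I'$, where $Q'$ is $Q=Q_{\Gamma}$ with the arrows of $\Delta$ deleted and $I'$ is the induced ideal. By construction of $Q_{\Gamma}$, each vertex $i$ (an edge of $\Gamma$) has at most one incoming and at most one outgoing arrow for each of its two ends in $\Gamma$, so $i$ is the source of at most two and the target of at most two arrows of $Q$, hence of $Q'$. Since $\Delta$ contains exactly one arrow of each oriented cycle $C_{v}^{i}$, every relation of type (I) and (II) lies in $\langle\Delta\rangle$ and vanishes in $B_{\Delta}$, so $I'$ is generated by the length-two monomial relations of type (III) that avoid $\Delta$. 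Admissibility of $I'$ is then immediate since $B_{\Delta}$ is finite dimensional and $I'$ is generated by paths of length $2$, and the last condition is a local check at each vertex $i$: among its (at most two) incoming arrows $a_{1},a_{2}$ and (at most two) outgoing arrows $b_{1},b_{2}$, the only compositions that are \emph{not} in $I'$ are the ones continuing along a cycle $C_{v}^{i}$, so there is at most one allowed and at most one forbidden composition on each side of every arrow.

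\textbf{$B$ as a trivial extension.} I would put the $\Z$-grading on $kQ$ placing the arrows of $\Delta$ in degree $1$ and all others in degree $0$. The relations of type (III) are monomial hence homogeneous, and each $C_{v}^{i}$ meets $\Delta$ in exactly one arrow, so the relations (I) and (II) are homogeneous of degree $1$; thus $I$ is homogeneous and $B=\bigoplus_{n\ge 0}B_{n}$. Using the standard monomial basis of a Brauer graph algebra — by relations (III) every nonzero path of length $\ge 2$ is a subpath of a single cycle $C_{v}^{i}$, and the socle is spanned by the full cycles $C_{v}^{i}=C_{v'}^{i}$ — one sees that every basis element meets $\Delta$ in at most one arrow, so $B=B_{0}\oplus B_{1}$, that $B_{0}=kQ'/I'=B_{\Delta}$, and that the socle lies in $B_{1}$. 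Now $B$ is a symmetric algebra, and, because its socle is concentrated in degree $1$, the symmetrizing form vanishes on $B_{0}\times B_{0}$ and on $B_{1}\times B_{1}$ and restricts to a non-degenerate associative pairing $B_{0}\times B_{1}\to k$. This identifies $B_{1}$ with $DB_{\Delta}$ as a $B_{\Delta}$-bimodule, and since $B_{1}\cdot B_{1}\subseteq B_{2}=0$ the multiplication of $B$ is exactly that of the trivial extension, so $B\cong B_{\Delta}\oplus DB_{\Delta}=\T{B_{\Delta}}$.

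\textbf{Converse.} For the existence part, I would present $\T{\Lambda}$ by quiver and relations directly from a gentle presentation $\Lambda=kQ/I$ and check that this presentation has the form $(Q_{\Gamma},I_{\Gamma})$ for the Brauer graph $\Gamma$ with one edge per vertex of $Q$, whose vertices and cyclic orderings encode the maximal ``permitted'' cyclic walks of $\Lambda$ together with the pairs of arrows completing each permitted path to such a walk; the arrows adjoined in passing from $\Lambda$ to $\T{\Lambda}$ then form an admissible cut $\Delta$ with $(B_{\Gamma})_{\Delta}=\Lambda$, and $B_{\Gamma}\cong\T{\Lambda}$ by the first part. Uniqueness is then immediate: if $\Lambda\cong B_{\Delta}\cong B'_{\Delta'}$ for Brauer graph algebras $B,B'$, then $B\cong\T{\Lambda}\cong B'$ by the isomorphism just established.

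The main obstacle is the second step, and inside it the identification $B_{1}\cong DB_{\Delta}$ of $B_{\Delta}$-bimodules: this requires making the symmetrizing form of the Brauer graph algebra explicit on the monomial basis and checking that the grading genuinely puts $B$ in the shape of a trivial extension. The combinatorial construction of $\Gamma$ from a gentle algebra in the converse is similarly a delicate, though purely combinatorial, verification.
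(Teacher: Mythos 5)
The paper does not prove this statement: it is imported verbatim from Schroll \cite{Schroll} (Theorem 3.13 and Corollary 3.14) and used as a black box, so there is no internal proof to compare against. Your outline is nevertheless a correct reconstruction of the standard argument, and your route to the second bullet is, if anything, cleaner than the original one: Schroll works by explicitly presenting the trivial extension of a gentle algebra by quiver and relations and matching it with $(Q_{\Gamma},I_{\Gamma})$, whereas you grade $B$ by the cut, observe $B=B_{0}\oplus B_{1}$ with $B_{0}\cong B_{\Delta}$ and socle in degree $1$, and let the symmetrizing form do the identification $B_{1}\cong DB_{\Delta}$. The one point that genuinely needs care there is the one you flag yourself: an arbitrary symmetrizing form on a symmetric algebra need not vanish on $B_{0}\times B_{0}$, so you must either use the standard form of the Brauer graph algebra (value $1$ on the full cycles $C_{v}^{i}$, $0$ on all other basis paths, which is well defined because relation (I) identifies the two cycles attached to an edge) or replace a given form by its degree-one component and check nondegeneracy via the socle. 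Your reductions of the relations of types (I) and (II) into $\langle\Delta\rangle$, the local gentleness check, and the deduction of uniqueness in the converse from $B\cong\T{\Lambda}$ are all correct; the remaining substantive work, as you say, is the combinatorial construction of $\Gamma$ from the maximal permitted threads of $\Lambda$ in the existence half of the converse, which is exactly where Schroll's paper spends its effort.
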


\medskip

Notice that the choice of an admissible cut $\Delta$ in a Brauer graph algebra $B$ gives a $\Z$-grading on its associated quiver $Q$ : the arrows in $\Delta$ are of degree 1 and the others of degree 0. Since the relations \ref{item1:Brauer graph algebra}, \ref{item2:Brauer graph algebra} and \ref{item3:Brauer graph algebra} generating $I$ are homogeneous with respect to this $\Z$-grading on $Q$, this induces a $\Z$-grading on $B$. We denote $(B,\Delta)$ the associated $\Z$-graded algebra. In particular, since this $\Z$-grading coincide with the natural $\Z$-grading of the trivial extension, $\Lambda=B_{\Delta}$ may be seen as the degree 0 subalgebra of $(B,\Delta)$. The following corollary is a reformulation of Corollary \ref{cor:non gradué projectif} for this setting, where $\phi$ is the identity between the trivial extensions.

\medskip

\begin{cor} \label{cor:non gradué BGA}
    Let $B$ be a Brauer graph algebra with two admissible cuts $\Delta_{1}$ and $\Delta_{2}$ such that the gentle algebras $\Lambda_{i}:=B_{\Delta_{i}}$ have finite global dimension. We denote by $\proj{B,\Delta_{i}}{\Z}$ the full subcategory of $\gr{B,\Delta_{i}}{\Z}$ whose objects are the projectives. Let us assume that for all $j$ there exists $n_{j}\in \Z$ such that 

    \smallskip
    
    \begin{equation*}
    \begin{aligned}
    \proj{B,\Delta_{1}}{\Z} &\overset{\sim}{\longrightarrow} \proj{B,\Delta_{2}}{\Z} \\[0.1cm]
    e_{j}B &\longmapsto e_{j}B(n_{j})
    \end{aligned}
    \end{equation*}

    \smallskip
    
    \noindent gives an equivalence of $\Z$-graded categories. Then, the algebras $\Lambda_{1}$ and $\Lambda_{2}$ are derived equivalent.

    \smallskip

\end{cor}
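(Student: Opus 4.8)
The plan is to recognize the statement as the specialization of Corollary \ref{cor:non gradué projectif} in which the algebra isomorphism $\phi$ is the identity, so that the only work is to spell out the identifications that turn the hypotheses of that corollary into the ones stated here.

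First I would invoke Theorem \ref{thm:Schroll}. Since $\Lambda_{i}=B_{\Delta_{i}}$, the Brauer graph algebra $B$ is isomorphic to $\T{\Lambda_{i}}$ for $i=1,2$; moreover, as recalled just after Theorem \ref{thm:Schroll}, under this isomorphism the natural $\Z$-grading of $\T{\Lambda_{i}}$ becomes the admissible-cut grading $(B,\Delta_{i})$ (the arrows of $\Delta_{i}$ in degree $1$, the others in degree $0$), with the degree-$0$ subalgebra of $(B,\Delta_{i})$ identified with $\Lambda_{i}$. In particular the primitive idempotents $e_{j}$ of $B$ correspond to those of $\Lambda_{i}$ and lie in degree $0$. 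Composing the two isomorphisms, I take $\phi\colon\T{\Lambda_{1}}\rightarrow\T{\Lambda_{2}}$ to be the map which, after these identifications, is $\id{B}$, so that $\phi(e_{j})=e_{j}$ for every $j$.

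It then remains to check the hypotheses of Corollary \ref{cor:non gradué projectif}. The algebras $\Lambda_{i}$ are basic, since $k$ is algebraically closed, and they have finite global dimension by assumption. For the homogeneity requirement on $\phi$ I would use the basis of $B$ given by a choice of representatives of the nonzero paths of $Q$ (one representative for each family identified by a relation of type \ref{item1:Brauer graph algebra}): each such path is homogeneous for $(B,\Delta_{i})$, of degree equal to the number of its arrows lying in $\Delta_{i}$, and this number is $0$ or $1$ because $(B,\Delta_{i})$ is concentrated in degrees $0$ and $1$ (being a trivial extension). Hence $\phi=\id{B}$ carries this homogeneous basis of $\T{\Lambda_{1}}$ onto a homogeneous basis of $\T{\Lambda_{2}}$. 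Finally, since $\phi(e_{j})=e_{j}$, the assumed equivalence of $\Z$-graded categories $\proj{B,\Delta_{1}}{\Z}\overset{\sim}{\longrightarrow}\proj{B,\Delta_{2}}{\Z}$, $e_{j}B\mapsto e_{j}B(n_{j})$, is precisely the equivalence $\proj{\T{\Lambda_{1}}}{\Z}\overset{\sim}{\longrightarrow}\proj{\T{\Lambda_{2}}}{\Z}$, $e_{j}\T{\Lambda_{1}}\mapsto(\phi(e_{j})\T{\Lambda_{2}})(n_{j})$, called for in Corollary \ref{cor:non gradué projectif}. That corollary then yields the derived equivalence of $\Lambda_{1}$ and $\Lambda_{2}$.

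The argument is short, and the point I expect to require the most care is the bookkeeping of the identifications $\T{\Lambda_{i}}\cong B$ supplied by Theorem \ref{thm:Schroll}: one must make sure that the two $\Z$-gradings and the idempotents transported along these isomorphisms are exactly those appearing in the statement, and in particular that both gradings exhibit $B$ as a trivial extension, so that the homogeneity check via the path basis---and its comparison with the homogeneity hypothesis of Corollary \ref{cor:non gradué projectif}---goes through cleanly. Once this is in place, the statement reduces verbatim to Corollary \ref{cor:non gradué projectif}.
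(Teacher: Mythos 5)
Your proposal matches the paper exactly: the text immediately preceding the corollary states that it "is a reformulation of Corollary \ref{cor:non gradué projectif} for this setting, where $\phi$ is the identity between the trivial extensions," which is precisely your reduction, and your bookkeeping of the identifications $\T{\Lambda_{i}}\cong B$ via Theorem \ref{thm:Schroll} and of the admissible-cut grading with the natural grading of the trivial extension is the same as (and slightly more detailed than) what the paper records. No gaps.
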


\medskip

\begin{ex} \label{ex:non gradué BGA}
   Let us consider the Brauer graph algebra $B=kQ/I$ defined in Example \ref{ex:quiver and relations of a Brauer graph algebra} with two admissible cuts $\Delta_{1}=\{\alpha_{2},\beta_{2}\}$ and $\Delta_{2}=\{\beta_{1}, \alpha_{1}\}$. In this case, the $\Z$-gradings of $B$ induced by the $\Delta_{i}$ are given on $Q$ by 

    \smallskip

        \begin{figure}[H]   
        \centering
        \begin{tikzcd}[column sep=2cm, row sep=2cm, scale=0.75]
            1 \arrow[cramped]{r}[above]{0} \arrow[cramped]{d}[description]{0} & 4 \arrow[cramped]{d}[right]{0} \\
            2 \arrow[cramped, xshift=2mm]{u}[right]{1} \arrow[cramped, xshift=-2mm]{u}[left]{1}   & 3 \arrow[cramped]{l}[below]{0} \arrow[phantom]{l}[yshift=-7mm, scale=0.9]{(Q,\Delta_{1})}
        \end{tikzcd}
        \hspace{2cm}
        \begin{tikzcd}[column sep=2cm, row sep=2cm, scale=0.75]
            1 \arrow[cramped]{r}[above]{1} \arrow[cramped]{d}[description]{1} & 4 \arrow[cramped]{d}[right]{0} \\
            2 \arrow[cramped, xshift=2mm]{u}[right]{0} \arrow[cramped, xshift=-2mm]{u}[left]{0}   & 3 \arrow[cramped]{l}[below]{0} \arrow[phantom]{l}[yshift=-7mm, scale=0.9]{(Q,\Delta_{2})}
        \end{tikzcd}
    \end{figure}

    \smallskip

    \noindent By definition, the gentle algebras $\Lambda_{i}:=B_{\Delta_{i}}$ are the degree 0 part of $(B,\Delta_{i})$. In particular, their quivers are given by

    \smallskip

    \begin{center}
        \begin{tikzcd}[column sep=2cm, row sep=2cm, scale=0.75]
        1 \arrow[cramped]{r} \arrow[cramped]{d} & 4 \arrow[cramped]{d} \\
            2   & 3 \arrow[cramped]{l} \arrow[phantom]{l}[yshift=-7mm, scale=0.9]{Q_{\Lambda_{1}}}
        \end{tikzcd}
        \hspace{2cm}
        \begin{tikzcd}[column sep=2cm, row sep=2cm, scale=0.75]
            1  & 4 \arrow[cramped]{d} \\
            2 \arrow[cramped, xshift=1.5mm]{u} \arrow[phantom, xshift=-1.5mm]{u}[near start, name=B]{}[very near start, name=C]{} \arrow[cramped, xshift=-1.5mm]{u}   & 3 \arrow[cramped]{l} \arrow[phantom]{l}[near end, name=A]{} \arrow[phantom]{l}[yshift=-7mm, scale=0.9]{Q_{\Lambda_{2}}}
            \arrow[dash, densely dashed, to path={(A)to[bend left=50](-2.01,-2.01)to[bend left=50](B)}]
        \end{tikzcd}
    \end{center}
    
    \smallskip
    
    \noindent Thus, we have the following equivalence of $\Z$-graded categories

    \smallskip
    
    \begin{equation*}
    \begin{aligned}
    \proj{B,\Delta_{1}}{\Z} &\overset{\sim}{\longrightarrow} \proj{B,\Delta_{2}}{\Z} \\[0.1cm]
    e_{1}B &\longmapsto e_{1}B(1) &&\\
    e_{j}B &\longmapsto e_{j}B &&\mbox{for $j=2,3,4$}
    \end{aligned}
    \end{equation*}

    \smallskip

    \noindent Note that the algebras $\Lambda_{i}$ both have finite global dimension. Hence, they are derived equivalent thanks to Corollary \ref{cor:non gradué BGA}.

\end{ex}

\medskip

Let $B=kQ/I$ be a Brauer graph algebra with two admissible cuts $\Delta_{1}$ and $\Delta_{2}$. We have seen that each admissible cut induces a $\Z$-grading on the arrows of $Q$ and the relations \ref{item1:Brauer graph algebra}, \ref{item2:Brauer graph algebra} and \ref{item3:Brauer graph algebra} generating $I$ are homogeneous with respect to each $\Z$-grading. Thus, we obtain a $\Z^{2}$-grading on $B$ such that the $i$-th coordinate corresponds to the $\Z$-grading induced by $\Delta_{i}$. We denote by $(B,\Delta_{1},\Delta_{2})$ the associated $\Z^{2}$-graded algebra. Let $\Lambda_{i}:=B_{\Delta_{i}}$ be the gentle algebra corresponding to the admissible cut $\Delta_{i}$. Considering the indices modulo 2, note that $\Delta_{i+1}$ induces a $\Z$-grading on the arrows of $\Lambda_{i}$. This $\Z$-grading extends to a $\Z$-grading on $\Lambda_{i}$ since the relations of the ideal generated by $I\cup\Delta_{i}$ are homogeneous with respect to this $\Z$-grading.
We denote by $(\Lambda_{1}, \Delta_{2})$ and $(\Lambda_{2}, \Delta_{1})$ these $\Z$-graded algebras. Thus, one can construct a $\Z^{2}$-grading on $\T{\Lambda_{i}}$ where the first component is the natural $\Z$-grading of the trivial extension. This leads to the following corollary.

\medskip

\begin{cor} \label{cor:gradué BGA}
    Let $B$ be a Brauer graph algebra with two admissible cuts $\Delta_{1}$ and $\Delta_{2}$ such that the gentle algebras $\Lambda_{i}:=B_{\Delta_{i}}$ have finite global dimension. Then, we have an equivalence of triangulated categories

    \smallskip
    
    \[\Db{\gr{\Lambda_{1},\Delta_{2}}{\Z}}\simeq \Db{\gr{\Lambda_{2},\Delta_{1}}{\Z}}\]
\end{cor}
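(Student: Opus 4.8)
The plan is to recognise this statement as an instance of Corollary \ref{cor:gradué iso}, so that the work reduces to producing the data required just before that corollary and to identifying the gradings it outputs. By Theorem \ref{thm:Schroll} the algebra $B$ is isomorphic both to $\T{\Lambda_{1}}$ and to $\T{\Lambda_{2}}$; composing these two isomorphisms gives $\phi\colon \T{\Lambda_{1}}\overset{\sim}{\longrightarrow}\T{\Lambda_{2}}$ which, read on $B$, is the identity. Under the identification $B\cong\T{\Lambda_{i}}$ the natural $\Z$-grading of the trivial extension is the $\Z$-grading attached to the admissible cut $\Delta_{i}$ (degree $1$ on the arrows of $\Delta_{i}$, degree $0$ on the others), so $\phi$ sends homogeneous elements to homogeneous elements, $\T{\Lambda_{i}}$ is concentrated in degrees $0$ and $1$, and $\T{\Lambda_{i}}_{0}=\Lambda_{i}=B_{\Delta_{i}}$, $\T{\Lambda_{i}}_{1}=D\Lambda_{i}$. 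Then I would take for $B_{1}$ the basis of $\Lambda_{1}=B_{\Delta_{1}}\subset B$ given by the residues of the paths of $Q$ avoiding the arrows of $\Delta_{1}$, and split $B_{1}=B_{1}^{0}\sqcup B_{1}^{1}$ according to $\Delta_{2}$-degree: $B_{1}^{0}$ is the set of paths avoiding both $\Delta_{1}$ and $\Delta_{2}$, while $B_{1}^{1}$ is the set of paths avoiding $\Delta_{1}$ and traversing exactly one arrow of $\Delta_{2}$ (there can be no more, since $B$ is also concentrated in $\Delta_{2}$-degrees $0$ and $1$).

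The substance is then to check the two identities displayed before Corollary \ref{cor:gradué iso}: that $\phi\bigl(B_{1}^{0}\cup (B_{1}^{1})^{*}_{B_{1}}\bigr)$ is the analogous path basis $B_{2}$ of $\Lambda_{2}=B_{\Delta_{2}}$, and that $\phi(x^{*}_{B_{1}})=\phi(x)^{*}_{B_{2}}$ for $x\in B_{1}^{0}$ while $\phi(x)=\phi(x^{*}_{B_{1}})^{*}_{B_{2}}$ for $x\in B_{1}^{1}$. The key fact making this work is that $B$ is symmetric and, under $B\cong\T{\Lambda_{i}}$, its symmetrizing form is the canonical pairing between $\Lambda_{i}$ and $D\Lambda_{i}$; hence the dual of a basis of residues of paths is again a basis of residues of paths, and the form has bidegree $(1,1)$ for the $\Z^{2}$-grading of $B$ by $(\Delta_{1},\Delta_{2})$. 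Concretely this forces the form to pair a $\Delta_{1}$-avoiding path with a $\Delta_{2}$-avoiding path: if $p\in B_{1}^{1}$ then $p^{*}_{B_{1}}$ has $\Delta_{1}$-degree $1$ and $\Delta_{2}$-degree $0$, so $\phi(p^{*}_{B_{1}})\in\Lambda_{2}$; similarly for the other cases. Putting these together, $B_{1}^{0}\cup(B_{1}^{1})^{*}_{B_{1}}$ maps onto the full path basis of $\Lambda_{2}$, and the two displayed identities hold. I expect this combinatorial bookkeeping — tracking Schroll's explicit isomorphism $B\cong\T{B_{\Delta}}$ on basis elements and verifying that the gradings induced by the two admissible cuts are exchanged by the symmetrizing form — to be the only real obstacle; everything else is formal.

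Granting this, the construction preceding Corollary \ref{cor:gradué iso} equips $\Lambda_{1}$ with the $\Z$-grading $d_{1}$ that is $0$ on $B_{1}^{0}$ and $1$ on $B_{1}^{1}$, i.e. exactly the grading $\Delta_{2}$ on the arrows of $\Lambda_{1}$, and symmetrically $d_{2}$ is the grading $\Delta_{1}$ on $\Lambda_{2}$; these are the $\Z$-graded algebras $(\Lambda_{1},\Delta_{2})$ and $(\Lambda_{2},\Delta_{1})$. Since $\Lambda_{1}$ and $\Lambda_{2}$ have the same quiver $Q$ as the basic algebra $B$ they are basic, and they are of finite global dimension by hypothesis, so Corollary \ref{cor:gradué iso} applies and yields the triangle equivalence $\Db{\gr{\Lambda_{1},\Delta_{2}}{\Z}}\simeq\Db{\gr{\Lambda_{2},\Delta_{1}}{\Z}}$, with the extra compatibility with the Auslander--Reiten translation coming, through Corollary \ref{cor:gradué iso}, from Theorem \ref{thm:Happel gradué}.
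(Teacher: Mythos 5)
Your proposal is correct and is essentially the paper's own argument: the paper proves this corollary by observing that the isomorphism $(x,y)\mapsto(x+y,-y)$ of $\Z^{2}$ carries the $\Z^{2}$-grading of $\T{\Lambda_{1}}$ (natural grading, $\Delta_{2}$-grading) onto that of $\T{\Lambda_{2}}$ and then applying Theorem \ref{thm:Happel gradué}, which is exactly the content of Corollary \ref{cor:gradué iso} that you reduce to. The hypothesis-checking you carry out (dual path bases, the symmetrizing form being concentrated in bidegree $(1,1)$) is precisely what the paper defers to Remark \ref{rem:hypothèse cor gradué iso} with ``one can check'', so your write-up supplies the same proof with that verification made explicit.
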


\medskip

\begin{proof}
    This follows from the fact that the following isomorphism 

    \smallskip
    
\begin{equation*}
    \begin{aligned}
        &&\Z^{2} &&&\longrightarrow &&\Z^{2} \\
        &&(x,y) &&&\longmapsto && (x+y,-y)
    \end{aligned}
\end{equation*}

\smallskip

\noindent sends the $\Z^{2}$-grading of $\T{\Lambda_{1}}$ onto the $\Z^{2}$-grading of $\T{\Lambda_{2}}$.
\end{proof}

\medskip

\begin{rem} \label{rem:hypothèse cor gradué iso}
    One can check that $\Lambda_{1}$ and $\Lambda_{2}$ defined in the previous corollary satisfy the assumptions written before Corollary \ref{cor:gradué iso} for $\phi:\T{\Lambda_{1}}\rightarrow \T{\Lambda_{2}}$ being the identity.
\end{rem}

\medskip

\begin{ex} \label{ex:gradué BGA}
    Let us consider the Brauer graph algebra $B=kQ/I$ defined in Example \ref{ex:quiver and relations of a Brauer graph algebra} with two admissible cuts $\Delta_{1}=\{\alpha_{2},\beta_{2}\}$ and $\Delta_{2}=\{\alpha_{2},\beta_{1}\}$. In this case, the $\Z$-gradings of $B$ induced by the $\Delta_{i}$ are given on $Q$ by
    
    \begin{figure}[H]   
        \centering
        \begin{tikzcd}[column sep=2cm, row sep=2cm, scale=0.75]
            1 \arrow[cramped]{r}[above, Aquamarine]{0} \arrow[cramped]{d}[description]{\textcolor{Aquamarine}{0}} & 4 \arrow[cramped]{d}[right, Aquamarine]{0} \\
            2 \arrow[cramped, xshift=2mm]{u}[right, Aquamarine]{1} \arrow[cramped, xshift=-2mm]{u}[left, Aquamarine]{1}   & 3 \arrow[cramped]{l}[below, Aquamarine]{0} \arrow[phantom]{l}[yshift=-7mm, scale=0.9]{(Q,\textcolor{Aquamarine}{\Delta_{1}})}
        \end{tikzcd}
        \hspace{2cm}
        \begin{tikzcd}[column sep=2cm, row sep=2cm, scale=0.75]
            1 \arrow[cramped]{r}[above,orange]{0} \arrow[cramped]{d}[description]{\textcolor{Orange}{1}} & 4 \arrow[cramped]{d}[right,orange]{0} \\
            2 \arrow[cramped, xshift=2mm]{u}[right,orange]{1} \arrow[cramped, xshift=-2mm]{u}[left,orange]{0}   & 3 \arrow[cramped]{l}[below,orange]{0} \arrow[phantom]{l}[yshift=-7mm, scale=0.9]{(Q,\textcolor{Orange}{\Delta_{2}})}
        \end{tikzcd}
    \end{figure}
    
    \noindent By definition, the gentle algebras $\Lambda_{i}:=B_{\Delta_{i}}$ are the degree 0 part of $(B,\Delta_{i})$. Moreover, the $\Z$-grading of $(B,\Delta_{i+1})$ gives rise to a $\Z$-grading on $\Lambda_{i}$ which is given on its quiver as follows
    
    \smallskip
    
    \begin{figure}[H]
        \centering
       \begin{tikzcd}[column sep=2cm, row sep=2cm, scale=0.75]
            1 \arrow[cramped]{r}[above,Orange]{0} \arrow[cramped]{d}[left]{\textcolor{Orange}{1}} & 4 \arrow[cramped]{d}[right, Orange]{0} \\
            2    & 3 \arrow[cramped]{l}[below, Orange]{0} \arrow[phantom]{l}[yshift=-7mm, scale=0.9]{(Q_{\Lambda_{1}},\textcolor{Orange}{\Delta_{2}})}
        \end{tikzcd}
        \hspace{2cm} 
        \begin{tikzcd}[column sep=2cm, row sep=2cm, scale=0.75]
            1 \arrow[cramped]{r}[above,Aquamarine]{0} \arrow[phantom]{r}[near start, name=D]{} & 4 \arrow[cramped]{d}[right,Aquamarine]{0} \\
            2  \arrow[cramped]{u}[left,Aquamarine]{1}   \arrow[phantom]{u}[near start, name=B]{}[near end, name=C]{} & 3 \arrow[cramped]{l}[below,Aquamarine]{0} \arrow[phantom]{l}[yshift=-7mm, scale=0.9]{(Q_{\Lambda_{2}},\textcolor{Aquamarine}{\Delta_{2}})}
            \arrow[phantom]{l}[near end, name=A]{} \arrow[dash, densely dashed, cramped, bend right, cramped, from=A]{B} \arrow[dash, densely dashed, cramped, bend right, cramped, from=C]{D}
         \end{tikzcd}
    \end{figure}
    
    \smallskip
    
    \noindent Thus, one can construct a $\Z^{2}$-grading on $\T{\Lambda_{i}}=B$ whose first component is given by the natural $\Z$-grading of the trivial extension. The $\Z^{2}$-graded quiver of $\T{\Lambda_{i}}$ is given by 

     \smallskip

    \begin{figure}[H]
        \centering
        \begin{tikzcd}[column sep=2.5cm, row sep=2.5cm, scale=0.35]
            1 \arrow[cramped]{r}[above,scale=0.8]{(\textcolor{Aquamarine}{0},\textcolor{orange}{0})} \arrow[cramped]{d}[description, scale=0.8]{(\textcolor{Aquamarine}{0},\textcolor{orange}{1})} & 4 \arrow[cramped]{d}[right,scale=0.8]{(\textcolor{Aquamarine}{0},\textcolor{orange}{0})} \\
            2 \arrow[cramped, xshift=3.5mm]{u}[right,scale=0.8]{(\textcolor{Aquamarine}{1},\textcolor{orange}{0})} \arrow[cramped, xshift=-3.5mm]{u}[left,scale=0.8]{(\textcolor{Aquamarine}{1},\textcolor{orange}{-1})}   & 3 \arrow[cramped]{l}[below,scale=0.8]{(\textcolor{Aquamarine}{0},\textcolor{orange}{0})} \arrow[phantom]{l}[yshift=-7mm, scale=0.9]{Q_{\T{\Lambda_{1}}}}
        \end{tikzcd}
        \qquad \qquad 
        \begin{tikzcd}[column sep=2.5cm, row sep=2.5cm, scale=0.35]
            1 \arrow[cramped]{r}[above,scale=0.8]{(\textcolor{orange}{0},\textcolor{Aquamarine}{0})} \arrow[cramped]{d}[description, scale=0.8]{(\textcolor{orange}{1},\textcolor{Aquamarine}{-1})} & 4 \arrow[cramped]{d}[right,scale=0.8]{(\textcolor{orange}{0},\textcolor{Aquamarine}{0})} \\
            2 \arrow[cramped, xshift=3.5mm]{u}[right,scale=0.8]{(\textcolor{orange}{1},\textcolor{Aquamarine}{0})} \arrow[cramped, xshift=-3.5mm]{u}[left,scale=0.8]{(\textcolor{orange}{0},\textcolor{Aquamarine}{1})}   & 3 \arrow[cramped]{l}[below,scale=0.8]{(\textcolor{orange}{0},\textcolor{orange}{0})} \arrow[phantom]{l}[yshift=-7mm, scale=0.9]{Q_{\T{\Lambda_{2}}}}
        \end{tikzcd}
    \end{figure}

    \noindent Since the following isomorphism 

    \smallskip
    
    \begin{equation*}
    \begin{aligned}
        &&\Z^{2} &&&\longrightarrow &&\Z^{2} \\
        &&(x,y) &&&\longmapsto && (x+y,-y)
    \end{aligned}
\end{equation*}

\smallskip

\noindent sends the $\Z^{2}$-grading of $\T{\Lambda_{1}}$ onto the $\Z^{2}$-grading of $\T{\Lambda_{2}}$, we conclude that there is an equivalence of triangulated categories 

\[\Db{\gr{\Lambda_{1},\Delta_{2}}{\Z}}\simeq\Db{\gr{\Lambda_{2},\Delta_{1}}{\Z}}\]

\noindent However, note that the algebras $\Lambda_{1}$ and $\Lambda_{2}$ are not derived equivalent. Indeed, since the quiver of $\Lambda_{2}$ has an oriented cycle, the algebra $\Lambda_{2}$ cannot be derived equivalent to a hereditary algebra.
\end{ex}

\medskip

\subsection{Generalized Kauer moves}

\medskip

In this subsection, we focus on the notion of generalized Kauer moves. These are particular moves of edges in a Brauer graph that yield a derived equivalence between their corresponding Brauer graph algebras \cite[Theorem 3.10]{Soto}. In order to apply the results of the previous section, we need to generalize this derived equivalence to the graded case. Let us first recall their construction from \cite{Soto}. We start with an alternative definition of a Brauer graph arising from combinatorial topology (see \cite{Lazarus} for instance).

\medskip

\begin{defn} \label{def:Brauer graph H}
A \textit{Brauer graph} is the data $\Gamma=(H,\iota,\sigma)$ where 

\smallskip

\begin{itemize}[label=\textbullet, font=\tiny]
\item $H$ is the set of half-edges;
\item $\iota$ is a permutation of $H$ without fixed point satisfying $\iota^{2}=\id{H}$ : it is called the \textit{pairing};
\item $\sigma$ is a permutation of $H$ called the \textit{orientation}.
\end{itemize}

\end{defn}

\medskip

To the data $\Gamma=(H,\iota,\sigma)$, one can naturally construct a graph whose vertex set is $\Gamma_{0}=H/\sigma$, edge set is $\Gamma_{1}=H/\iota$ and source map is the natural projection $s:H\rightarrow H/\sigma$. Moreover, each cycle in the decomposition of $\sigma$ corresponds to a cyclic ordering of the edges around a vertex. We keep previous convention for the orientation of a Brauer graph (see before Example \ref{ex:Brauer graph}).

\medskip

\begin{ex} \label{ex:new Brauer graph}
Let $\Gamma$ be the Brauer graph defined in Example \ref{ex:Brauer graph}. Using Definition \ref{def:Brauer graph H}, this Brauer graph may be seen as $\Gamma=(H,\iota,\sigma)$ 

\smallskip

     \begin{figure}[H]
        \centering
        \begin{tikzpicture}[scale=1]
        \tikzstyle{vertex}=[circle,draw]
        \node[vertex] (a) at (-1.5,0) {};
        \node[vertex] (b) at (1.5,0) {};
        \node[vertex] (c) at (3.5,1) {};
        \node[vertex] (d) at (3.5,-1) {};
        \draw[Red] (a) to[bend right=55] node[near start,below left, scale=0.75]{$1^{+}$} node[near end, below right, scale=0.75]{$1^{-}$} (b);
        \draw[VioletRed] (a) to[bend left=55] node[near start, above left, scale=0.75]{$2^{+}$} node[near end, above right, scale=0.75]{$2^{-}$} (b); 
        \draw[Orange] (b)--(c) node[near start, above, scale=0.75]{$3^{-}$} node[near end, above, scale=0.75]{$3^{+}$};
        \draw[Goldenrod] (b)--(d) node[near start, below, scale=0.75]{$4^{-}$} node[near end, below, scale=0.75]{$4^{+}$};
        \end{tikzpicture}
    \end{figure}
    
\smallskip

\noindent where $H=\{1^{+}, 1^{-}, 2^{+}, 2^{-}, 3^{+}, 3^{-}, 4^{+}, 4^{-}\}$, $\iota=(1^{+}\ 1^{-})(2^{+}\ 2^{-})(3^{+}\ 3^{-})(4^{+}\ 4^{-})$ and $\sigma=(1^{+}\ 2^{+})(1^{-}\ 4^{-}\ 3^{-}\ 2^{-})$. Note that each cycle in $\sigma$ corresponds to a cyclic ordering of the half edges around a vertex that was computed in Example \ref{ex:Brauer graph}.
\end{ex}

\medskip

Let us recall now the notion of $G$-graded Brauer graph where $G$ is an abelian group.

\medskip

\begin{defn} \label{def:homogeneous grading on Brauer graph algebras}
    Let $\Gamma=(H,\iota,\sigma)$ be a Brauer graph. 

    \smallskip

    \begin{itemize}[label=\textbullet,font=\tiny]
    \item For $g\in G$, we say that a $G$-grading $d:H\rightarrow G$ is \textit{$g$-homogeneous} if for all $v\in H/\sigma$
    
    \smallskip
    
    \[\sum_{h\in H, s(h)=v}d(h)=g\]

    \smallskip

    \item We say that $(\Gamma,d)$ is a \textit{$G$-graded Brauer graph} if the $G$-grading $d:H\rightarrow G$ is $g$-homogeneous for some $g\in G$. 
    \end{itemize}
\end{defn}

\medskip

\begin{ex}
    Let $\Gamma=(H,\iota,\sigma)$ be a Brauer graph and $B$ be its Brauer graph algebra. One can check that an admissible cut of $B$ can be understood as the data of a $\Z$-grading $d:H\rightarrow \Z$ which is 1-homogeneous and whose values are contained in $\{0,1\}$.
\end{ex}

\medskip

    Let $(\Gamma,d)$ be a $G$-graded Brauer graph and $B$ be the Brauer graph algebra associated to $\Gamma$. Note that  $d$ induces a $G$-grading on the quiver of $B$ by identifying the arrows with their start half-edge. Since $d$ is $g$-homogeneous for some $g\in G$, the relations of $B$ are homogeneous with respect to $d$. Thus, $d$ gives rise to a $G$-grading on $B$. We denote by $(B,d)$ the corresponding $G$-graded algebra.

\medskip

\begin{defn}
    Let $(\Gamma,d)=(H,\iota,\sigma,d)$ be a $G$-graded Brauer graph. We fix $h\in H$ and $r\in\Z_{\ge 0}$ smaller than the size of the $\sigma$-orbit of $h$. The \textit{$G$-graded generalized Kauer move} of $(\Gamma,d)$ over $(h,r)$ is the $G$-graded Brauer graph

\[\mu^{+}_{(h,r)}(\Gamma,d)=(H,\iota,\sigma_{(h,r)},d_{(h,r)})\]

\noindent where $\sigma_{(h,r)}=(h \quad \sigma^{r+1}h)\,\sigma\, (\sigma^{r}h \quad \iota\sigma^{r+1}h)$ and $d_{(h,r)}:H\rightarrow G$ is defined by

\smallskip

\begin{equation*}
    \begin{aligned}
        d_{(h,r)}: \ &\iota\sigma^{r+1}h &&\longmapsto &&-\sum_{i=0}^{r}d(\sigma^{i}h) \\
        &\sigma^{r}h &&\longmapsto &&\left\{\begin{aligned}
            &d(\iota\sigma^{r+1}h)+d(\sigma^{r}h) &&\mbox{if $\iota\sigma^{r+1}h\neq\sigma^{-1}h$} \\
            &\sum_{i=-1}^{r}d(\sigma^{i}h)+d(\sigma^{r}h) &&\mbox{else}
        \end{aligned}\right. \\
        &\sigma^{-1}h &&\longmapsto &&\left\{\begin{aligned}
            &\sum_{i=-1}^{r}d(\sigma^{i}h) &&\mbox{if $\iota\sigma^{r+1}h\neq\sigma^{-1}h$} \\
            &-\sum_{i=0}^{r}d(\sigma^{i}h) &&\mbox{else}   
        \end{aligned}\right. \\
        &h' &&\longmapsto &&d(h') \qquad \mbox{for $h'\neq\iota\sigma^{r+1}h,\sigma^{r}h,\sigma^{-1}h$}
    \end{aligned}
\end{equation*}
\end{defn}

\smallskip

Denoting respectively by $s:H\rightarrow H/\sigma$ and $s_{(h,r)}:H\rightarrow H/\sigma_{(h,r)}$ the source maps of $(H,\iota,\sigma)$ and $(H,\iota,\sigma_{(h,r)})$, one can easily check that there exists a bijection $\phi:H/\sigma\rightarrow H/\sigma_{(h,r)}$ such that for all $v\in H/\sigma_{(h,r)}$

\smallskip

\[\sum_{h\in H, s_{(h,r)}(h)=v}d_{(h,r)}=\sum_{h\in H, s(h)=\phi^{-1}(v)}d(h)\]

\smallskip

\noindent Hence, $\mu^{+}_{(h,r)}(\Gamma,d)$ is a well-defined $G$-graded Brauer graph. Moreover the underlying Brauer graph $\mu^{+}_{(h,r)}(\Gamma):=(H,\iota,\sigma_{(h,r)})$ of $\mu^{+}_{(h,r)}(\Gamma,d)$ is obtained from $\Gamma$ as follows

\smallskip

\begin{figure}[H]
\centering
        \begin{tikzpicture}[scale=0.8]
        \tikzstyle{vertex}=[draw, circle, minimum size=0.2cm]
        \begin{scope}[xshift=-3cm]
        \node[vertex] (1) at (-2,0) {};
        \node[vertex] (2) at (2,0) {};
        \draw (1)--(2) node[near start, above]{$\sigma^{r+1}h$} node[near end, above]{$\iota\sigma^{r+1}h$};
        \draw (1)--(-2.5,-1.75) node[below]{$\sigma^{-1}h$};
        \draw[Orange] (1)--(-1.5,-1.75) node[below, right]{$h$};
        \draw[Orange] (1)--(-1,-1.25) node[below, right]{$\sigma^{j}h$};
        \draw[Orange] (1)--(-0.5,-0.75) node[below, right]{$\sigma^{r}h$};
        \draw (2)--(2.5,-1.75) node[below]{$\sigma\iota\sigma^{r+1}h$};
        \draw (0,-3.5) node{$\Gamma$};
        \draw[|-|, Orange] (-1.5,-2.2)--(0,-2.2) node[below, midway]{$(h,r)$};
        \end{scope} 
        
        \draw[<->] (0,0)--(3,0);

        \begin{scope}[xshift=6cm]
        \node[vertex] (1) at (-2,0) {};
        \node[vertex] (2) at (2,0) {};
        \draw (1)--(2) node[near start, above]{$\sigma^{r+1}h$} node[near end, above]{$\iota\sigma^{r+1}h$};
        \draw (1)--(-2.5,-1.75) node[below]{$\sigma^{-1}h$};
        \draw[Orange] (2)--(1.5,-1.75) node[below left]{$\sigma^{r}h$};
        \draw[Orange] (2)--(1,-1.25) node[below, left]{$\sigma^{j}h$};
        \draw[Orange] (2)--(0.5,-0.75) node[below, left]{$h$};
        \draw (2)--(2.5,-1.75) node[below]{$\sigma\iota\sigma^{r+1}h$};
        \draw (0,-3.5) node{$\mu^{+}_{(h,r)}(\Gamma)$};
        \end{scope}
        \end{tikzpicture}
\caption{Generalized Kauer move of $(h,r)$}
\label{Generalized Kauer move}
\end{figure}
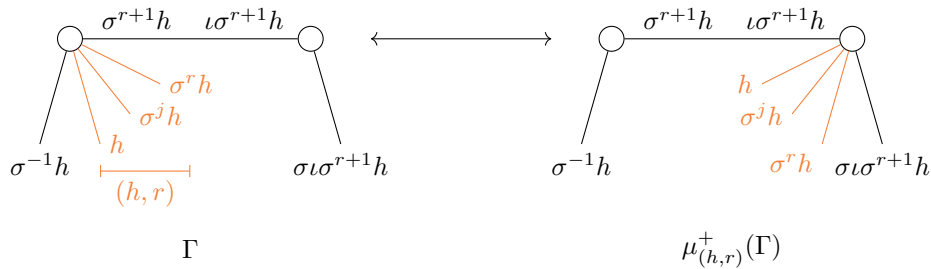

\medskip

Let $(\Gamma,d)=(H,\iota,\sigma,d)$ be a $G$-graded Brauer graph and $H'$ be a subset of $H$ stable under $\iota$. It was proved in \cite{Soto} that one can consider successive $G$-graded generalized Kauer moves over $(h,r)\in H\times\Z_{\ge 0}$ satisfying that $\sigma^{i}h\in H'$ for $i=0,\ldots, r$ and $\sigma^{-1}h,\sigma^{r+1}h\notin H'$. We call such a couple $(h,r)$ a \textit{maximal sector} of elements in $H'$. Moreover, these successive moves can be done in any order. This leads naturally to the following definition.

\begin{defn} \label{def:generalized Kauer move}
    The \textit{$G$-graded generalized Kauer move} of $(\Gamma,d)$ over $H'$ is the $G$-graded Brauer graph $\mu^{+}_{H'}(\Gamma,d)=(H,\iota,\sigma_{H'},d_{H'})$ obtained from the succession of $G$-graded generalized Kauer moves over all maximal sectors $(h,r)\in H\times\Z_{\ge 0}$ of elements in $H'$.
\end{defn}

By construction $\sigma_{H'}$ is given by $\tau_{\mathrm{cut}}^{\Gamma}\sigma\tau_{\mathrm{paste}}^{\Gamma}$ where $\tau_{\mathrm{cut}}^{\Gamma}$ is the product of the transpositions $(h \quad \sigma^{r+1}h)$ and  $\tau_{\mathrm{paste}}^{\Gamma}$ is the product of the transpositions $(\sigma^{r}h \quad \iota\sigma^{r+1}h)$ over all maximal sectors $(h,r)\in H\times \Z_{\ge 0}$ of elements in $H'$. These two permutations are well-defined since the transpositions in the products have pairwise disjoint support.

\medskip

In the non-graded setting, we know that the generalized Kauer moves induce derived equivalences between the corresponding Brauer graph algebras. More precisely, this derived equivalence is described thanks to a tilting object arising from silting mutations. We refer to \cite[Definition 2.30]{AI} for the definition of silting mutation. 

\medskip

\begin{thm}\label{thm:generalized Kauer move}
    Let $\Gamma=(H,\iota,\sigma)$ be a Brauer graph and $H'$ be a subset of $H$ stable under $\iota$. We denote by $\mu^{+}_{H'}(\Gamma)$ the Brauer graph given by $(H,\iota,\sigma_{H'})$ where $\sigma_{H'}$ is defined in Definition \ref{def:generalized Kauer move}. Moreover, we denote by $B$ and $B'$ the Brauer graph algebras associated to $\Gamma$ and $\mu^{+}_{H'}(\Gamma)$ respectively.

    \smallskip

    \begin{enumerate}[label=(\arabic*)]

    \item \label{item:generalized Kauer move 1} \cite{AI} The silting mutation $\mu^{+}(B;e_{H\backslash H'}B)$ is tilting in $\per{B}$ where $e_{H\backslash H'}B$ denotes the direct sum of the indecomposable projective $B$-modules associated to the edges in $(H\backslash H')/\iota$.
    \item \label{item:generalized Kauer move 2} \cite{Soto} The endomorphism algebra of $\mu^{+}(B;e_{H\backslash H'}B)$ is isomorphic to $B'$.
    
    \end{enumerate}

    \smallskip

    \noindent In particular, there is an equivalence of triangulated categories given by

    \smallskip

    \begin{center}
    \begin{tikzcd}[column sep=2.5cm]
        \per{B'} \arrow[cramped]{r}[above, yshift=1mm]{-\underset{B'}{\overset{\mbox{\tiny \bf{L}}}{\otimes}}\ \mu^{+}(B;e_{H\backslash H'}B)} &\per{B}
    \end{tikzcd}
    \end{center}
\end{thm}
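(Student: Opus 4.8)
The plan is to handle the three assertions in turn. Assertions (1) and (2) are the cited inputs, so the real work is only to assemble them via Rickard's Morita theory for derived categories.

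For (1) I would argue as follows. Every Brauer graph algebra is symmetric, so in particular $B$ is symmetric and its Nakayama functor $\nu=-\otimes^{\mathbf{L}}_{B}DB$ is isomorphic to the identity on $\per{B}$. The object $\mu^{+}(B;e_{H\backslash H'}B)$ is by definition the silting mutation of the trivial silting complex $B$ at the summand $e_{H\backslash H'}B$, hence a silting object of $\per{B}$ by \cite{AI}. To see it is actually tilting, recall the functorial isomorphism $\Hom[\Db{B}]{T}{Y}\cong D\Hom[\Db{B}]{Y}{\nu T}$, valid for $T\in\per{B}$ and $Y\in\Db{B}$; applying it with $Y=T[i]$ and using $\nu T\cong T$ gives $\Hom[\per{B}]{T}{T[i]}\cong D\Hom[\per{B}]{T}{T[-i]}$, so the silting vanishing in positive degrees forces the same vanishing in negative degrees, and $\mu^{+}(B;e_{H\backslash H'}B)$ is tilting.

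For (2) I would invoke \cite[Theorem 3.10]{Soto}, which identifies $\operatorname{End}_{\per{B}}(\mu^{+}(B;e_{H\backslash H'}B))$ with the Brauer graph algebra $B'$ associated to $\mu^{+}_{H'}(\Gamma)$ through an explicit comparison of quivers and of the defining relations \ref{item1:Brauer graph algebra}, \ref{item2:Brauer graph algebra}, \ref{item3:Brauer graph algebra}. This combinatorial identification is the technical heart of the statement and the step I expect to be the main obstacle, but since it is already established in the cited work I would not reprove it here.

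Granting (1) and (2), the equivalence follows from Rickard's theorem \cite{Rickard}: a tilting complex $T\in\per{B}$ with $\operatorname{End}_{\per{B}}(T)\cong B'$ induces a triangle equivalence $\Db{B'}\overset{\sim}{\longrightarrow}\Db{B}$ sending $B'$ to $T$, which restricts to $\per{B'}\overset{\sim}{\longrightarrow}\per{B}$; in its bimodule form this equivalence is $-\otimes^{\mathbf{L}}_{B'}\tilde{T}$ for a complex $\tilde{T}$ of $B'$-$B$-bimodules quasi-isomorphic to $T$ over $B$, which is precisely the displayed functor. The remaining hypothesis, that $T$ generate $\per{B}$ as a thick subcategory, is automatic because silting mutation of $B$ does not change the thick subcategory it generates. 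Apart from the combinatorics in (2), the symmetry argument for (1) and the appeal to Rickard are routine.
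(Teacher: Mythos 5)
Your proposal is correct and matches the paper's treatment: the theorem is stated there as a recollection of the cited results, with (1) coming from the fact that silting objects over a symmetric algebra are tilting (exactly your Nakayama-functor argument), (2) from \cite[Theorem 3.10]{Soto}, and the displayed equivalence from Rickard's theorem. The only thing you add beyond the paper is the explicit Serre-duality computation for (1), which is the standard argument behind the citation to \cite{AI}.
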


\medskip

To apply the results of the previous section, we need a graded version of this theorem. 
    
\medskip

\begin{thm} \label{thm:graded generalized Kauer moves}
    Let $(\Gamma,d)=(H,\iota,\sigma,d)$ be a $G$-graded Brauer graph  and $H'$ be a subset of $H$ stable under $\iota$. We denote by $(B,d)$ and $(B',d')$ the $G$-graded Brauer graph algebras associated to $(\Gamma,d)$ and $\mu^{+}_{H'}(\Gamma,d)$ respectively. 

    \smallskip

    \begin{enumerate}[label=(\arabic*)]
    \item \label{item:graded generalized Kauer moves 1} The silting mutation $\mu^{+}(\oplus_{g\in G}B(g);\oplus_{g\in G}\, e_{H\backslash H'}B(g))$ is tilting in $\per{\gr{B}{\Z}}$ where $e_{H\backslash H'}B$ denotes the direct sum of the indecomposable projective $G$-graded modules over $B$ associated to the edges in $(H\backslash H')/\iota$.

    \item \label{item:graded generalized Kauer moves 2} The silting mutation $\mu^{+}(\oplus_{g\in G}B(g);\oplus_{g\in G}\,e_{H\backslash H'}B(g))$ decomposes as $\oplus_{g\in G}T(g)$ where $T\in\per{\gr{B}{\Z}}$ and whose graded endomorphism algebra is isomorphic to $B'$ as $\Z$-graded algebras.
    
    \end{enumerate}

    \smallskip

    \noindent In particular, there is an equivalence of $\Z$-graded triangulated categories 

    \smallskip

    \begin{center}
        \begin{tikzcd}[column sep=5cm]
            \per{\gr{B'}{\Z}} \arrow[cramped]{r}[above]{-\underset{\mathrm{gr}(B')}{\overset{\mbox{\tiny \bf{L}}}{\otimes}}\ \mu^{+}(\underset{g\in G}{\bigoplus}B(g) \, ; \, \underset{g\in G}{\bigoplus}e_{H\backslash H'}B(g))} &\per{\gr{B}{\Z}}
        \end{tikzcd}
    \end{center}

    \smallskip

    \noindent where $-\otimes_{\mathrm{gr}(B')} \, \mu^{+}(\oplus_{g\in G}\, B(g) \, ; \, \oplus_{g\in G}\, e_{H\backslash H'}B(g)):\mathrm{K^{b}}(\gr{B'}{\Z})\rightarrow\mathrm{K^{b}}(\gr{B}{\Z})$ is the graded tensor product between the bounded homotopy category of $\gr{B'}{\Z}$ and $\gr{B}{\Z}$ as defined in \cite{ZH}.
\end{thm}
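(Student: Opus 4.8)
The plan is to lift the non‑graded result, Theorem~\ref{thm:generalized Kauer move}, to the graded setting: I would produce an explicit $G$-equivariant lift of the silting mutation and then compute the internal degrees of its endomorphism algebra. A first reduction: it suffices to treat the case where $H'$ is a single maximal sector, so that $\mu^{+}_{H'}=\mu^{+}_{(h,r)}$. Indeed, the transpositions occurring in $\tau_{\mathrm{cut}}^{\Gamma}$ and $\tau_{\mathrm{paste}}^{\Gamma}$ have pairwise disjoint supports, the corresponding silting mutations at the direct summands of $e_{H\backslash H'}B$ can be performed independently and in any order (as in \cite{Soto}), and $d_{H'}$ is assembled sector by sector; so one establishes \ref{item:graded generalized Kauer moves 1} and \ref{item:graded generalized Kauer moves 2} for one sector and iterates. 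This turns the statement into a local computation around the edges $\sigma^{0}h,\dots,\sigma^{r}h$.

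Next I would take the explicit two‑term complexes of projectives from \cite{Soto} describing $\mu^{+}(B;e_{H\backslash H'}B)$ — the minimal left $\mathrm{add}(e_{H\backslash H'}B)$-approximations of the mutated indecomposable projectives, together with their explicit structure maps — and decorate them with internal degrees. Since $B$ carries the $G$-grading $(B,d)$ and $e_{H\backslash H'}B$ is a graded direct summand, there is a unique way to shift the projective summands of these complexes so that all differentials and structure maps become homogeneous of degree $0$: the shift attached to a summand is forced by the $d$-degree of the path in $(B,d)$ representing the corresponding component of the approximation. Taking the direct sum over all shifts $(g)$, $g\in G$, yields an object $\bigoplus_{g\in G}T(g)\in\per{\gr{B}{G}}$. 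That this object realizes the silting mutation $\mu^{+}(\bigoplus_{g\in G}B(g);\bigoplus_{g\in G}e_{H\backslash H'}B(g))$, that it is presilting, and that it generates $\per{\gr{B}{G}}$ are all inherited from the non‑graded case through the exact forgetful functor $\gr{B}{G}\to\m{B}$: this functor commutes with the shifts, identifies the degree‑$0$ graded Hom‑spaces with the homogeneous components of the non‑graded Hom‑spaces, and carries $\bigoplus_{g}T(g)$ to a direct sum of copies of $\mu^{+}(B;e_{H\backslash H'}B)$, so presilting, self‑orthogonality and generation may be checked downstairs using Theorem~\ref{thm:generalized Kauer move}\ref{item:generalized Kauer move 1}. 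This establishes \ref{item:graded generalized Kauer moves 1} together with the decomposition $\bigoplus_{g}T(g)$ of \ref{item:graded generalized Kauer moves 2}.

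The heart of the proof — and the step I expect to be the main obstacle — is the identification of the \emph{graded} endomorphism algebra of $T$ with $(B',d')$. Theorem~\ref{thm:generalized Kauer move}\ref{item:generalized Kauer move 2} already provides an isomorphism $\mathrm{End}(\mu^{+}(B;e_{H\backslash H'}B))\cong B'$ of ungraded algebras, realized so that each arrow of $Q_{B'}$ is represented by an explicit morphism between the two‑term complexes. It remains to read off the internal degree of each such morphism from the shifts introduced above, and to check arrow by arrow that the outcome agrees with $d_{(h,r)}$. The three cases in the defining formula for $d_{(h,r)}$ — its values on $\iota\sigma^{r+1}h$, on $\sigma^{r}h$ and on $\sigma^{-1}h$, including the sub‑case $\iota\sigma^{r+1}h=\sigma^{-1}h$ — are exactly the degrees one obtains from the cut‑and‑paste walks through the edges incident with the mutated vertex, and the $g$-homogeneity of $d$ guarantees that the relations \ref{item1:Brauer graph algebra}--\ref{item3:Brauer graph algebra} defining $B'$ remain homogeneous, so that $(B',d')$ is a well‑defined $G$-graded algebra and the above isomorphism is one of $G$-graded algebras. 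This is a finite but delicate case analysis, essentially a degree‑decorated rerun of the corresponding computation of \cite{Soto}.

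Finally, once $\bigoplus_{g}T(g)=\mu^{+}(\bigoplus_{g}B(g);\bigoplus_{g}e_{H\backslash H'}B(g))$ is known to be a tilting object of $\per{\gr{B}{G}}$ with graded endomorphism algebra $(B',d')$, the claimed equivalence of graded triangulated categories $\per{\gr{B'}{G}}\simeq\per{\gr{B}{G}}$, given by the graded derived tensor product with this tilting object, follows from the graded tilting theorem, Theorem~1 of \cite{ZH}.
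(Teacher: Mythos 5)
Your outline reaches the same endpoint as the paper, and the construction of the graded lift (shifting the summands of the approximation complexes by the $d$-degrees of the connecting paths so that all maps become homogeneous of degree $0$, then summing over all shifts $T(g)$) is exactly the paper's $f_{[h]}(g)$ and $T=e_{H\backslash H'}B\oplus\mathrm{Cone}(f_{[h]})$. Where you genuinely diverge is the proof that the graded mutation is \emph{tilting} rather than merely silting. You deduce the vanishing of negative self-extensions by descending along the forgetful functor, using that graded Hom-spaces are the homogeneous components of the non-graded ones and that the underlying complex is the non-graded mutation, which is tilting by Theorem~\ref{thm:generalized Kauer move}\ref{item:generalized Kauer move 1}. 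The paper instead works entirely inside $\per{\gr{B}{\Z}}$: it invokes the criterion of Theorem 2.32 of \cite{AI} (injectivity of $\Hom[{}]{e_{[h']}B(g)}{f_{[h]}}$ for the kept summands) and verifies it using the graded symmetric structure of the trivial extension, namely the bimodule isomorphism $\psi_{g'}:B(g')\rightarrow DB(g'-g_{0})$ coming from $g_{0}$-homogeneity of $d$, so that any $a$ killed by composition with the approximation pairs to zero against everything and hence vanishes. Your descent argument is legitimate and arguably lighter for the self-orthogonality, but two caveats: (i) generation of $\per{\gr{B}{\Z}}$ is \emph{not} something the forgetful functor gives you --- it follows instead from the mutation triangles exhibiting each $e_{[h]}B(g)$ in the thick subcategory of the $T(g')$ (or from the general fact that a silting mutation of a silting collection is silting), so that justification needs repairing; (ii) your preliminary reduction to a single maximal sector, with iteration, is an extra layer the paper avoids by performing the approximation into $\mathrm{add}(\oplus_{g}e_{H\backslash H'}B(g))$ for all sectors simultaneously, and it silently relies on the order-independence/composition statements of \cite{Soto}. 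For part \ref{item:graded generalized Kauer moves 2} and the final equivalence your treatment (degree-decorated rerun of the computation of \cite{Soto}, then Theorem 1 of \cite{ZH}) matches the paper's, which is equally terse at that point; the paper additionally cites Theorem 2 of \cite{ZH} for compatibility with the shift functors.
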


\medskip

\begin{proof}
    In the following, $e_{[h]}B$ denotes the indecomposable projective $G$-graded module over $B$ corresponding to the edge $[h]\in H/\iota$. For all $h\in H$, we denote by

\smallskip

\[\alpha(h,H'):e_{[h]}B\longrightarrow e_{[\sigma^{r(h)+1}h]}B(\sum_{i=0}^{r(h)+1}d(\sigma^{i}h))\]

\smallskip

\noindent the morphism in $\per{\gr{B}{\Z}}$ induced by the path from $h$ to $\sigma^{r(h)+1}h$ where $r(h)+1=\min\{r\ge 0 \ | \ \sigma^{r}h\notin H'\}$. By an abuse of notation, we set $e_{[\sigma^{r(h)+1}h]}B$ to be 0 if all elements of the $\sigma$-orbit of $h$ is contained in $H'$. Note that the left minimal approximation of $e_{[h]}B(g)$ in $\per{\gr{B}{\Z}}$ is given by

\smallskip
    
    \begin{center}
        \begin{tikzcd}[column sep=2cm, row sep=0.1cm]
            f_{[h]}(g): e_{[h]}B(g) \arrow{r}{\begin{pmatrix} \alpha(h,H')(g) \\ \alpha(\iota h,H')(g)\end{pmatrix}} &e_{[\sigma^{r(h)+1}h]}B(g+\sum_{i=0}^{r(h)+1}d(\sigma^{i}h))) \ \bigoplus \\
            &e_{[\sigma^{r(\iota h)+1}\iota h]}B(g+\sum_{i=0}^{r(\iota h)+1}d(\sigma^{i}\iota h)))
        \end{tikzcd}
    \end{center}

\smallskip

\noindent for all $[h]\in H/\iota$. Thus, the left mutation of $\oplus_{g\in G}B(g)$ over $\oplus_{g\in G}\, e_{H\backslash H'}B(g)$ can be decomposed into $\oplus_{g\in G}T(g)$ where 

\smallskip

\begin{center}
    \begin{tikzcd}
        T=e_{H\backslash H'}B \oplus \mathrm{Cone}(f_{[h]})\in\per{\gr{B}{\Z}}
    \end{tikzcd}
\end{center}

\smallskip

Thanks to Theorem 2.32 of \cite{AI}, it suffices to show that for all $[h']\in (H\backslash H')/\iota$, the morphism $\Hom[\per{\gr{B}{\Z}}]{e_{[h']}B(g)}{f_{[h]}}$ is injective for all $g\in G$ in order to prove \ref{item:graded generalized Kauer moves 1}. Let us consider $a:e_{[h']}B(g)\rightarrow e_{[h]}B$ in $\per{\gr{B}{\Z}}$ such that the following diagram commutes

\smallskip

\begin{center}
    \begin{tikzcd}[column sep=2cm, row sep=1.7cm]
        e_{[h]}B \arrow[cramped]{r}[above]{f_{[h]}} &e_{[\sigma^{r(h)+1}h]}B \bigoplus e_{[\sigma^{r(\iota h)+1}\iota h]}B \\
        e_{[h']}B(g) \arrow[cramped]{u}[left]{a} \arrow[cramped]{ur}[below, right,yshift=-2mm]{0} &
    \end{tikzcd}
\end{center}

\smallskip

\noindent We want to prove that $a=0$. By assumption, we know that $d:H\rightarrow \Z$ is $g_{0}$-homogeneous, for some $g_{0}\in G$. By Theorem \ref{thm:Schroll}, $B$ is the trivial extension of some algebra $\Lambda$. Moreover, $d$ induces a $\Z$-grading on $\Lambda$ as seen before Corollary \ref{cor:gradué BGA}. One can check that the $g$-th degree component of $(B,d)$ is given by $\Lambda_{g}\,\oplus\,D(\Lambda_{g_{0}-g})$. Hence, there is an isomorphism of $G$-graded $B$-$B$-bimodule $\psi_{g'}:B(g')\rightarrow DB(g'-g_{0})$ for all $g'\in G$. In particular, $\psi_{g}$ induces an isomorphism

\[\psi_{g}:\Hom[\per{\gr{B}{\Z}}]{e_{[h_{1}]}B(g)} {e_{[h_{2}]}B}\overset{\sim}{\longrightarrow} D\Hom[\per{\gr{B}{\Z}}]{e_{[h_{2}]}B}{e_{[h_{1}]}B(g-g_{0})}\]

\noindent for all $[h_{1}],[h_{2}]\in H/\iota$. Moreover, for all $b:e_{[h]}B\rightarrow e_{[h']}B(g-g_{0})$ in $\per{\gr{B}{\Z}}$, there exists $b':e_{[\sigma^{r(h)+1}h]}B\oplus e_{[\sigma^{r(\iota h)+1}\iota h]}B \rightarrow e_{[h']}B(g-g_{0})$ in $\per{\gr{B}{\Z}}$ such that $b=b'f_{[h]}$ since $f_{[h]}$ is a left $\mathrm{add}(\oplus_{g'\in G}\, e_{H\backslash H'}B(g'))$-approximation of $e_{[h]}B$ in $\per{\gr{B}{\Z}}$. Hence, we obtain

\[\psi_{g}(a)(b)=\psi_{g_{0}}(ba)(\id{e_{[h']}B})=\psi_{g_{0}}(b'f_{[h]}a)(\id{e_{[h']}B})=0\]

\noindent We have proved that $\psi_{g}(a)=0$ which induces that $a=0$ since $\psi_{g}$ is injective.

\medskip

Note that the image of $T$ through the forgetful functor $\per{\gr{B}{\Z}}\rightarrow\per{B}$ is equal to $\mu^{+}(B;e_{H\backslash H'}B)$ whose endomorphism ring is isomorphic to $B'$ thanks to the second point of Theorem \ref{thm:generalized Kauer move}. By construction of $d'$, one can check that the graded endomorphism algebra of $T$ is isomorphic to $(B',d')$ as $G$-graded algebras. Hence, we obtain the desired equivalence using Theorem 1 of \cite{ZH}. Moreover, this equivalence commutes with the shift functor thanks to Theorem 2 of \cite{ZH}. 
\end{proof}

\medskip

One can now apply Corollary \ref{cor:non gradué} using $\Z$-graded generalized Kauer moves. 

\medskip

\begin{cor} \label{cor:non graded generalized Kauer moves}
Let $\Gamma_{1}=(H_{1},\iota_{1},\sigma_{1})$ and $\Gamma_{2}=(H_{2},\iota_{2},\sigma_{2})$ be two Brauer graph algebras equipped with an admissible cut $d_{i}:H_{i}\rightarrow \Z$. We denote by $(B_{i},d_{i})$ the $\Z$-graded Brauer graph algebra corresponding to $(\Gamma_{i},d_{i})$. Let us assume that there exists $H_{1}'$ a subset of $H_{1}$ stable under $\iota_{1}$ such that $\mu^{+}_{H_{1}'}(\Gamma_{1},d_{1})=(\Gamma_{2},\delta_{2})$ for some 1-homogeneous $\Z$-grading $\delta_{2}:H_{2}\rightarrow\Z$. Moreover, assume that for all $j$ there exists $n_{j}\in \Z$ such that

\smallskip

\begin{equation*}
    \begin{aligned}
    \proj{B_{2},\delta_{2}}{\Z} &\overset{\sim}{\longrightarrow} \proj{B_{2},d_{2}}{\Z} \\[0.1cm]
    e_{j}B' &\longmapsto e_{j}B'(n_{j})
    \end{aligned}
    \end{equation*}

    \smallskip
    
\noindent gives an equivalence of $\Z$-graded categories. If the gentle algebras $\Lambda_{i}$ corresponding to $(\Gamma_{i},d_{i})$ have finite global dimension, then these two algebras are derived equivalent.
    
\end{cor}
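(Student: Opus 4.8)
The plan is to reduce the statement to Corollary~\ref{cor:non gradué}. Since the $\Lambda_{i}$ are gentle and are assumed to have finite global dimension, they are basic finite dimensional $k$-algebras of finite global dimension, so it suffices to produce an equivalence of $\Z$-graded triangulated categories $\Db{\gr{\T{\Lambda_{1}}}{\Z}}\simeq\Db{\gr{\T{\Lambda_{2}}}{\Z}}$. By Theorem~\ref{thm:Schroll} the trivial extension $\T{\Lambda_{i}}$ is isomorphic to the Brauer graph algebra $B_{i}$ of $\Gamma_{i}$, and under this isomorphism the natural $\Z$-grading of the trivial extension is exactly the $\Z$-grading $d_{i}$ induced by the admissible cut (as recorded in the discussion preceding Corollary~\ref{cor:non gradué BGA}). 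Thus the task reduces to constructing an equivalence of $\Z$-graded triangulated categories $\Db{\gr{B_{1},d_{1}}{\Z}}\simeq\Db{\gr{B_{2},d_{2}}{\Z}}$, which I would obtain as a composite of two equivalences: one coming from the graded Kauer move, the other from the regrading recorded by the integers $n_{j}$.

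First, I would apply Theorem~\ref{thm:graded generalized Kauer moves} with $G=\Z$ to the $\Z$-graded Brauer graph $(\Gamma_{1},d_{1})$ and the $\iota_{1}$-stable subset $H_{1}'$. Since $\mu^{+}_{H_{1}'}(\Gamma_{1},d_{1})=(\Gamma_{2},\delta_{2})$ by hypothesis, the $\Z$-graded Brauer graph algebra $(B',d')$ appearing in that theorem is precisely $(B_{2},\delta_{2})$. The theorem supplies a graded tilting object in $\per{\gr{B_{1}}{\Z}}$ whose graded endomorphism algebra is $(B_{2},\delta_{2})$, and hence, by the graded Morita theorem for derived categories (Theorems~1 and 2 of \cite{ZH}), an equivalence of $\Z$-graded triangulated categories $\Db{\gr{B_{2},\delta_{2}}{\Z}}\overset{\sim}{\longrightarrow}\Db{\gr{B_{1},d_{1}}{\Z}}$.

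Second, the hypothesis on the $n_{j}$ says that the shift-compatible assignment $e_{j}B_{2}\mapsto e_{j}B_{2}(n_{j})$ is an equivalence of $\Z$-graded categories $\proj{B_{2},\delta_{2}}{\Z}\simeq\proj{B_{2},d_{2}}{\Z}$; equivalently, in the spirit of Corollary~\ref{cor:non gradué projectif} with $\phi$ the identity, the graded projective $P=\bigoplus_{j}e_{j}B_{2}(n_{j})$ is a progenerator of $\gr{B_{2},d_{2}}{\Z}$ with graded endomorphism algebra isomorphic to $(B_{2},\delta_{2})$. Graded Morita theory then yields an equivalence of $\Z$-graded categories $\gr{B_{2},\delta_{2}}{\Z}\simeq\gr{B_{2},d_{2}}{\Z}$, which passes to bounded derived categories to give $\Db{\gr{B_{2},\delta_{2}}{\Z}}\simeq\Db{\gr{B_{2},d_{2}}{\Z}}$ as $\Z$-graded triangulated categories. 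Composing with the equivalence of the previous paragraph produces the desired equivalence $\Db{\gr{B_{1},d_{1}}{\Z}}\simeq\Db{\gr{B_{2},d_{2}}{\Z}}$, and Corollary~\ref{cor:non gradué} then gives the derived equivalence of $\Lambda_{1}$ and $\Lambda_{2}$.

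The statement is essentially a packaging of results already established, so I do not expect a serious obstacle; the point that will need the most care is keeping track of the various $\Z$-gradings and of shift-compatibility throughout — in particular verifying that the Kauer-move equivalence and the regrading equivalence are genuinely equivalences of $\Z$-\emph{graded} triangulated categories, and that the natural grading of $\T{\Lambda_{i}}$ really corresponds to $d_{i}$ under the isomorphism of Theorem~\ref{thm:Schroll}.
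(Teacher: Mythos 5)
Your proposal is correct and follows essentially the same route as the paper: the graded generalized Kauer move theorem gives the equivalence between $\per{\gr{B_{1},d_{1}}{\Z}}$ and $\per{\gr{B_{2},\delta_{2}}{\Z}}$, the hypothesis on the $n_{j}$ supplies the regrading equivalence to $\per{\gr{B_{2},d_{2}}{\Z}}$, and Corollary \ref{cor:non gradué} concludes. Your write-up is in fact more careful than the paper's about identifying the natural grading of $\T{\Lambda_{i}}$ with $d_{i}$ and about justifying the regrading step via graded Morita theory.
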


\medskip

\begin{proof}
    Thanks to Theorem \ref{thm:graded generalized Kauer moves}, we obtain an equivalence of triangulated categories between $\per{\gr{B_{1},d_{1}}{\Z}}$ and $\per{\gr{B_{2},\delta_{2}}{\Z}}$. Hence, our assumption leads to a triangle equivalence between the perfect categories of $\gr{B_{i},d_{i}}{\Z}$. We conclude that $\Lambda_{1}$ and $\Lambda_{2}$ are derived equivalent by Corollary \ref{cor:non gradué}.
\end{proof} 

\medskip

\begin{rem}
    With the notations of the previous corollary, let us assume that the degree 
    of the morphism $\alpha(h,H_{1}')$ is 0 for all $h\in H_{1}$. Then $\delta_{2}:H_{2}\rightarrow \Z$ is an admissible cut of $\Gamma_{2}$ by Proposition 2.6 of \cite{Soto}. We denote by $\Lambda_{3}$ the gentle algebra corresponding to $(\Gamma_{2},\delta_{2})$. Thanks to Theorem 3.8 in \cite{Soto}, we obtain the following triangle equivalence
    
    \smallskip
    
    \begin{center}
        \begin{tikzcd}[column sep=3cm]
            \per{\Lambda_{3}} \arrow[cramped]{r}[above]{\sim}[below]{-\underset{\Lambda_{3}}{\overset{\mbox{\tiny \bf{L}}}{\otimes}}\ \mu^{+}(\Lambda_{1} \, ; \, e_{H_{1}\backslash H'_{1}}\Lambda_{1})} &\per{\Lambda_{1}}
        \end{tikzcd}
    \end{center}

    \smallskip

    \noindent Note that this equivalence does not require $\Lambda_{1}$ or $\Lambda_{3}$ to have finite global dimension. However, in the case where they do have finite global dimension, it is not clear whether this equivalence is the one arising from the commutative diagram induced by Happel's theorem (cf Remark \ref{rem:Happel Rickard}). 

\end{rem}

\medskip

\begin{ex} \label{ex:graded generalized Kauer moves}
    Let $\Gamma=(H,\iota,\sigma)$ be the Brauer graph defined in Example \ref{ex:new Brauer graph} equipped with an admissible cut $d:H\rightarrow\Z$ defined by $d(1^{+})=d(2^{-})=d(3^{+})=d(4^{+})=1$ and $d(1^{-})=d(2^{+})=d(4^{-})=d(3^{-})=0$ which can be represented on $\Gamma$ as follows
    
    \smallskip

    \begin{figure}[H]
        \centering
        \begin{tikzpicture}[scale=1]
        \tikzstyle{vertex}=[circle,draw]
        \node[vertex] (a) at (-1.5,0) {};
        \node[vertex] (b) at (1.5,0) {};
        \node[vertex] (c) at (3.5,1) {};
        \node[vertex] (d) at (3.5,-1) {};
        \draw[Red] (a) to[bend right=55] node[near start,below left, scale=0.75,black]{$1$} node[near end, below right, scale=0.75,black]{$0$} (b);
        \draw[VioletRed] (a) to[bend left=55] node[near start, above left, scale=0.75,black]{$0$} node[near end, above right, scale=0.75,black]{$1$} (b); 
        \draw[Orange] (b)--(c) node[near start, above, scale=0.75,black]{$0$} node[near end, above, scale=0.75,black]{$1$};
        \draw[Goldenrod] (b)--(d) node[near start, below, scale=0.75,black]{$0$} node[near end, below, scale=0.75,black]{$1$};
        \end{tikzpicture}
    \end{figure}

\smallskip
    
    \noindent Considering $H'=\{1^{+},1^{-},4^{-},4^{+}\}$, the $\Z$-graded generalized Kauer move $(\mu^{+}_{H'}(\Gamma),d_{H'})=(H,\iota,\sigma_{H'},d_{H'})$ of $(\Gamma,d)$ over $\Z$ is defined by

    \smallskip

    \begin{figure}[H]
        \centering
        \begin{tikzpicture}
        \tikzstyle{vertex}=[circle,draw]
        \node[vertex] (a) at (-1.5,0) {};
        \node[vertex] (b) at (1,0) {};
        \node[vertex] (c) at (3.5,1) {};
        \node[vertex] (d) at (3.5,-1) {};
        \draw[Red] (b) to[bend right=50] node[near start,below left, scale=0.75]{$1^{+}$} node[near end, below right, xshift=-1mm, scale=0.75]{$1^{-}$} (c);
        \draw[Orange] (b) to[bend left=50] node[near start, above left, scale=0.75]{$3^{-}$} node[near end, above right, scale=0.75]{$3^{+}$} (c); 
        \draw[VioletRed] (a)--(b) node[near start, above, scale=0.75]{$2^{+}$} node[near end, above, scale=0.75]{$2^{-}$};
        \draw[Goldenrod] (c)--(d) node[near start, right, scale=0.75]{$4^{-}$} node[near end, right, scale=0.75]{$4^{+}$};
        \end{tikzpicture}
    \end{figure}

    \smallskip

    \noindent where $\sigma_{H'}=(3^{-} \ 2^{-} \ 1^{+})(3^{+} \ 1^{-} \ 4^{-})$ and $d_{H'}(2^{+})=d_{H'}(4^{-})=d_{H'}(4^{+})=1$, $d_{H'}(2^{-})=-1$, $d_{H'}(1^{+})=2$ and $d_{H'}(1^{-})=d_{H'}(3^{-})=d_{H'}(3^{+})=0$. Let us now define an admissible cut $d':H\rightarrow \Z$ on $\mu^{+}_{H'}(\Gamma)$ by $d'(2^{-})=d'(1^{-})=d'(3^{-})=d'(4^{-})=0$ and $d'(2^{+})=d'(1^{+})=d'(3^{+})=d'(4^{+})=1$. These two $\Z$-grading can be represented on $\mu^{+}_{H'}(\Gamma)$ as follows
    
    \smallskip

    \begin{figure}[H]
        \centering
        \begin{tikzpicture}
        \tikzstyle{vertex}=[circle,draw]
        \begin{scope}[xshift=-3.5cm]
         \node[vertex] (a) at (-1.5,0) {};
        \node[vertex] (b) at (1,0) {};
        \node[vertex] (c) at (3.5,1) {};
        \node[vertex] (d) at (3.5,-1) {};
        \draw[Red] (b) to[bend right=50] node[near start,below left, scale=0.75,black]{$2$} node[near end, below right, xshift=-1mm, scale=0.75,black]{$0$} (c);
        \draw[Orange] (b) to[bend left=50] node[near start, above left, scale=0.75,black]{$0$} node[near end, above right, scale=0.75,black]{$0$} (c); 
        \draw[VioletRed] (a)--(b) node[near start, above, scale=0.75,black]{$1$} node[near end, above, scale=0.75,black]{$-1$};
        \draw[Goldenrod] (c)--(d) node[near start, right, scale=0.75,black]{$1$} node[near end, right, scale=0.75,black]{$1$};
        \draw (1,-2) node{$(\mu^{+}_{H'}(\Gamma),d_{H'})$};
        \end{scope}
        \begin{scope}[xshift=3.5cm]
         \node[vertex] (a) at (-1.5,0) {};
        \node[vertex] (b) at (1,0) {};
        \node[vertex] (c) at (3.5,1) {};
        \node[vertex] (d) at (3.5,-1) {};
        \draw[Red] (b) to[bend right=50] node[near start,below left, scale=0.75,black]{$1$} node[near end, below right, xshift=-1mm, scale=0.75,black]{$0$} (c);
        \draw[Orange] (b) to[bend left=50] node[near start, above left, scale=0.75,black]{$0$} node[near end, above right, scale=0.75,black]{$1$} (c); 
        \draw[VioletRed] (a)--(b) node[near start, above, scale=0.75,black]{$1$} node[near end, above, scale=0.75,black]{$0$};
        \draw[Goldenrod] (c)--(d) node[near start, right, scale=0.75,black]{$0$} node[near end, right, scale=0.75,black]{$1$};
        \draw (1,-2) node{$(\mu^{+}_{H'}(\Gamma),d')$};
        \end{scope}
        \end{tikzpicture}
    \end{figure}

    \smallskip
    
    \noindent Denoting by $B'$ the Brauer graph algebra associated to $\mu^{+}_{H'}(\Gamma)$, the two previous $\Z$-gradings are represented on the quiver of $B'$ as follows

    \smallskip
    
    \begin{center}
        \begin{tikzcd}[column sep=2cm, row sep=2cm, scale=0.75]
            1 \arrow[cramped]{r}[above]{0} \arrow[cramped, xshift=-0.7mm, yshift=-0.7mm]{rd}[below left]{2}& 4 \arrow[cramped]{d}[right]{1} \\
            2 \arrow[cramped]{u}[left]{-1} & 3 \arrow[cramped, xshift=0.7mm, yshift=0.7mm]{ul}[above right]{0} \arrow[cramped]{l}[below]{0} \arrow[phantom]{l}[yshift=-7mm, scale=0.9]{(Q_{B'}, d_{H'})}
        \end{tikzcd}
        \hspace{2cm}
         \begin{tikzcd}[column sep=2cm, row sep=2cm, scale=0.75]
            1 \arrow[cramped]{r}[above]{0} \arrow[cramped, xshift=-0.7mm, yshift=-0.7mm]{rd}[below left]{1}& 4 \arrow[cramped]{d}[right]{0} \\
            2 \arrow[cramped]{u}[left]{0} & 3 \arrow[cramped, xshift=0.7mm, yshift=0.7mm]{ul}[above right]{1} \arrow[cramped]{l}[below]{0} \arrow[phantom]{l}[yshift=-7mm, scale=0.9]{(Q_{B'}, d')}
        \end{tikzcd}
        
    \end{center}

    \smallskip
    
    \noindent Hence, we obtain the following equivalence of $\Z$-graded categories

    \smallskip
    
    \begin{equation*}
    \begin{aligned}
    \proj{B',d_{H'}}{\Z} &\overset{\sim}{\longrightarrow} \proj{B',d'}{\Z} \\[0.1cm]
    e_{i}B' &\longmapsto e_{i}B'(-1) &&\mbox{for $i=1,4$} \\
    e_{i}B' &\longmapsto e_{i}B' &&\mbox{for $i=2,3$}
    \end{aligned}
    \end{equation*}

    \smallskip
    
    \noindent Let $\Lambda$ and $\Lambda'$ be the gentle algebras corresponding to $(\Gamma,d)$ and $(\mu^{+}_{H'}(\Gamma),d')$ respectively. By definition, these two algebras are the degree 0 part of $(\Gamma,d)$ and $(\mu^{+}_{H'}(\Gamma),d')$ respectively. Hence, their quivers are given by

    \smallskip

    \begin{center}
        \begin{tikzcd}[column sep=2cm, row sep=2cm, scale=0.75]
        1 \arrow[cramped]{r} \arrow[phantom]{r}[near start, name=D]{} & 4 \arrow[cramped]{d}  \\
        2 \arrow[cramped]{u} \arrow[phantom]{u}[near start, name=B]{}[near end, name=C]{}& 3 \arrow[cramped]{l} \arrow[phantom]{l}[near end, name=A]{}
        \arrow[phantom]{l}[yshift=-7mm, scale=0.9]{Q_{\Lambda}} \arrow[dash, densely dashed, cramped, bend right, from=A]{B} \arrow[dash, densely dashed, cramped, bend right, from=C]{D}
        \end{tikzcd}
        \hspace{2cm}
        \begin{tikzcd}[column sep=2cm, row sep=2cm, scale=0.75]
        1 \arrow[cramped]{r} \arrow[phantom]{r}[near start, name=D]{} & 4 \arrow[cramped]{d} \arrow[phantom]{d}[near end, name=A]{} \\
        2 \arrow[cramped]{u} \arrow[phantom]{u}[near end, name=C]{}& 3 \arrow[cramped]{l} \arrow[phantom]{l}[near start, name=B]{}
        \arrow[phantom]{l}[yshift=-7mm, scale=0.9]{Q_{\Lambda'}} \arrow[dash, densely dashed, cramped, bend right, from=A]{B} \arrow[dash, densely dashed, cramped, bend right, from=C]{D}
        \end{tikzcd}
    \end{center}

    \smallskip    
    
    \noindent

    \smallskip

    \noindent Note that the algebras $\Lambda$ and $\Lambda'$ both have finite global dimension. Therefore, we conclude that they are derived equivalent thanks to Corollary \ref{cor:non graded generalized Kauer moves}.
    
\end{ex}

\medskip

Let $\Gamma=(H,\iota,\sigma)$ be a Brauer graph equipped with an admissible cut $d:H\rightarrow \Z$ and a $1$-homogeneous $\Z$-grading $\delta:H\rightarrow \Z$. We denote by $\Lambda$ the gentle algebra corresponding to $(\Gamma,d)$. Hence, $\delta$ induces a $\Z$-grading on the arrows of $\Lambda$ which extends to a $\Z$-grading on $\Lambda$ since its relations are homogeneous with respect to this $\Z$-grading. Thus, by Theorem \ref{thm:Schroll}, it gives rise to a $\Z$-grading $\delta':H\rightarrow \Z$ on the Brauer graph $\Gamma$.   
\medskip

\begin{lem} \label{lem:graded generalized Kauer moves}
    With previous notation, we obtain an equivalence of triangulated categories

    \[\Db{\gr{\mathrm{Triv}(\Lambda),d,\delta}{\Z^{2}}}\simeq \Db{\gr{\mathrm{Triv}(\Lambda),d,\delta'}{\Z^{2}}}\]
\end{lem}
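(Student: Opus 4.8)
The plan is to reduce the statement to the observation that the two $\Z^{2}$-gradings on $\T{\Lambda}$ differ only by a group automorphism of $\Z^{2}$. Write $B:=\T{\Lambda}$, which by Theorem \ref{thm:Schroll} is the Brauer graph algebra $B_{\Gamma}$, and note that in both gradings appearing in the lemma the first component is the admissible cut $d$, i.e. the natural grading of the trivial extension; only the second components $\delta$ and $\delta'$ differ. So the first step is to compare $\delta$ and $\delta'$ as maps $H\to\Z$. On a half-edge $h$ with $d(h)=0$, which is an arrow of the quiver of $\Lambda$ seen as the degree-$0$ subalgebra of $(B,d)$, the grading $\delta'$ restricts to $\delta$ by construction. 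On a half-edge $h$ with $d(h)=1$, the corresponding arrow $\alpha_{h}$ of $Q_{B}$ vanishes in $\Lambda$, and under the identification $B\cong\T{\Lambda}$ it is, up to a scalar, the functional $p_{h}^{*}\in D\Lambda$ dual to the path $p_{h}$ of $\Lambda$ such that $\alpha_{h}$ and $p_{h}$ together form one of the oriented cycles $C_{v}^{i}$ of $Q_{B}$. Because $\delta$ is $1$-homogeneous and this cycle runs exactly once through each half-edge incident with $v$, the $\delta$-degrees of $\alpha_{h}$ and of $p_{h}$ add up to $1$; on the other hand, by the definition of the grading induced on a dual module (recalled in the graded case above), $p_{h}^{*}$ has $\delta'$-degree equal to minus the $\delta$-degree of $p_{h}$. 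Combining, $\delta'(h)=\delta(h)-1$ when $d(h)=1$, so in all cases $\delta'=\delta-d$ on $H$.

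Given this, the second step is formal: the group automorphism $\Z^{2}\to\Z^{2}$ sending $(x,y)$ to $(x,y-x)$ sends each grading value $(d(h),\delta(h))$ to $(d(h),\delta(h)-d(h))=(d(h),\delta'(h))$, so it transports the $\Z^{2}$-grading $(d,\delta)$ of $B$ onto the $\Z^{2}$-grading $(d,\delta')$ of $B$. Regrading $\Z^{2}$-graded $B$-modules along this automorphism therefore gives an isomorphism of abelian categories $\gr{\T{\Lambda},d,\delta}{\Z^{2}}\cong\gr{\T{\Lambda},d,\delta'}{\Z^{2}}$, hence an equivalence of bounded derived categories; this is the same manoeuvre used in the proofs of Corollary \ref{cor:gradué iso} and Corollary \ref{cor:gradué BGA}. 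That yields the claimed equivalence.

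The step I expect to need the most care is the identity $\delta'=\delta-d$, that is, the comparison between the grading $\delta$ of $\Gamma$ and the grading $\delta'$ induced on the trivial extension by $\delta|_{\Lambda}$. This relies on knowing precisely how the arrows of $Q_{\T{\Lambda}}$ that are not arrows of $Q_{\Lambda}$ sit inside $D\Lambda$ — namely that each is, up to scalar, the dual of a maximal path of $\Lambda$ — which is part of Schroll's theorem and of the setup preceding the lemma. One should check that these arrows can be chosen homogeneous for the $\delta$-grading, so that $\delta'$ is well defined on half-edges, and that the path dual to $\alpha_{h}$ is exactly the one completing the cycle through $\alpha_{h}$, so that $1$-homogeneity of $\delta$ produces the clean relation. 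Once this bookkeeping is settled the rest is immediate.
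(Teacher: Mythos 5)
Your proposal is correct and follows the same route as the paper: the paper's proof consists precisely of the observation that the automorphism $(x,y)\mapsto(x,y-x)$ of $\Z^{2}$ carries the grading $(d,\delta)$ of $\T{\Lambda}$ onto $(d,\delta')$, from which the derived equivalence is immediate by regrading. Your additional verification that $\delta'=\delta-d$ (using $1$-homogeneity of $\delta$ and the dual grading on $D\Lambda$) is exactly the bookkeeping the paper leaves implicit, and it checks out.
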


\medskip

\begin{proof}
    This follows from the fact that the following isomorphism

    \smallskip
    
    \begin{equation*}
    \begin{aligned}
        &&\Z^{2} &&&\longrightarrow &&\Z^{2} \\
        &&(x,y) &&&\longmapsto && (x,y-x)
    \end{aligned}
\end{equation*}

    \smallskip

    \noindent sends the $\Z^{2}$-grading $(d,\delta)$ of $\mathrm{Triv}(\Lambda)$ onto the $Z^{2}$-grading $(d,\delta')$ of $\mathrm{Triv}(\Lambda)$. 
\end{proof}

\medskip

We can now apply Corollary \ref{cor:gradué} using $\Z^{2}$-graded generalized Kauer moves.

\medskip

\begin{cor} \label{cor:graded generalized Kauer moves}
    Let $\Gamma_{1}=(H_{1},\iota_{1},\sigma_{1})$ and $\Gamma_{2}=(H_{2},\iota_{2},\sigma_{2})$ be two Brauer graph equipped with an admissible cut $d_{i}:H_{i}\rightarrow\Z$ and $\Lambda_{i}$ be the gentle algebra corresponding to $(\Gamma_{i},d_{i})$. We assume that there exists $H_{1}'$ a subset of $H_{1}$ stable under $\iota_{1}$ such that $\mu^{+}_{H_{1}'}(\Gamma_{1},d_{1})=(\Gamma_{2},\delta_{2})$ for some 1-homogeneous $\Z$-grading $\delta_{2}:H_{2}\rightarrow \Z$. In this case,

    \smallskip
    
    \begin{enumerate}[label=(\arabic*)]
    \item There exists a 1-homogeneous $\Z$-grading $\delta_{1}:H_{1}\rightarrow \Z$ on $\Gamma_{1}$ such that $\mu^{+}_{H_{1}'}(\Gamma_{1},\delta_{1})=(\Gamma_{2},d_{2})$.
    \item If the $\Lambda_{i}$ have finite global dimension then there is an equivalence of triangulated categories

    \smallskip
    
    \[\Db{\gr{\Lambda_{1},\delta_{1}}{\Z}}\simeq \Db{\gr{\Lambda_{2},\delta_{2}}{\Z}}\]
    \end{enumerate}
\end{cor}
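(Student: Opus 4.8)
The plan is to establish (1) by a direct combinatorial argument and to derive (2) from a $\Z^{2}$-graded generalized Kauer move, namely Theorem~\ref{thm:graded generalized Kauer moves} applied with the group $\Z^{2}$, followed by Corollary~\ref{cor:gradué}; Lemma~\ref{lem:graded generalized Kauer moves} is exactly what will make the two $\Z$-gradings line up at the end. For (1): a generalized Kauer move does not alter the set $H:=H_{1}=H_{2}$ of half-edges nor the pairing $\iota$, so with $\Gamma_{1}$ and $H_{1}'$ fixed the rule $d\mapsto d_{H_{1}'}$ of the definition is a $\Z$-linear self-map $T$ of $\Z^{H}$. A short inspection of the formulas shows that $T$, written in the basis of half-edges, is block lower-triangular with invertible diagonal blocks (or: $T^{-1}$ is realized by the reverse Kauer move, cf.\ \cite{Soto}), hence a bijection; and the vertex-sum identity recalled before Definition~\ref{def:generalized Kauer move}, applied to $T$ and $T^{-1}$, shows $T$ preserves $g$-homogeneity for every $g$. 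Since $T(d_{1})=\delta_{2}$ by hypothesis, I set $\delta_{1}:=T^{-1}(d_{2})$: it is $1$-homogeneous because $d_{2}$ is an admissible cut, and $\mu^{+}_{H_{1}'}(\Gamma_{1},\delta_{1})=(\Gamma_{2},T(\delta_{1}))=(\Gamma_{2},d_{2})$.

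For (2), assume in addition that both $\Lambda_{i}$ have finite global dimension. Put on $\Gamma_{1}$ the $\Z^{2}$-grading $(d_{1},\delta_{1})$, whose first coordinate is the admissible cut, i.e.\ the natural $\Z$-grading of $\T{\Lambda_{1}}$. As $T$ is $\Z$-linear it acts coordinatewise, so by (1) and the hypothesis $\mu^{+}_{H_{1}'}(\Gamma_{1},(d_{1},\delta_{1}))=(\Gamma_{2},(\delta_{2},d_{2}))$. Theorem~\ref{thm:graded generalized Kauer moves} with group $\Z^{2}$ then produces a tilting object in $\per{\gr{\T{\Lambda_{1}}}{\Z^{2}}}$ whose graded endomorphism algebra is $(\T{\Lambda_{2}},(\delta_{2},d_{2}))$, hence an equivalence of $\Z^{2}$-graded triangulated categories $\per{\gr{\T{\Lambda_{2}},(\delta_{2},d_{2})}{\Z^{2}}}\simeq\per{\gr{\T{\Lambda_{1}},(d_{1},\delta_{1})}{\Z^{2}}}$; being derived tensoring with a tilting object, it extends by the graded form of Rickard's theorem \cite{ZH} to an equivalence $\Db{\gr{\T{\Lambda_{2}},(\delta_{2},d_{2})}{\Z^{2}}}\simeq\Db{\gr{\T{\Lambda_{1}},(d_{1},\delta_{1})}{\Z^{2}}}$ commuting with both shift functors.

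It remains to put both sides in the shape required by Corollary~\ref{cor:gradué}, that is, with the natural grading of the trivial extension in the first coordinate and the second coordinate in Happel form. On the $\T{\Lambda_{2}}$-side I re-grade by the swap $(x,y)\mapsto(y,x)$ to reach $(d_{2},\delta_{2})$; then on both sides I apply Lemma~\ref{lem:graded generalized Kauer moves}, via $(x,y)\mapsto(x,y-x)$, replacing each Brauer-graph auxiliary grading $\delta_{i}$ by the grading $\delta_{i}'$ obtained from $\delta_{i}|_{\Lambda_{i}}$ by re-extension to $\T{\Lambda_{i}}$. A short bookkeeping of the three induced automorphisms of $\Z^{2}$ shows their composite is $(x,y)\mapsto(x+y,-y)$, which fixes $(1,0)$; hence the resulting equivalence $\Db{\gr{\T{\Lambda_{1}},d_{1},\delta_{1}'}{\Z^{2}}}\simeq\Db{\gr{\T{\Lambda_{2}},d_{2},\delta_{2}'}{\Z^{2}}}$ commutes with the natural shift. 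Since $(\T{\Lambda_{i}},d_{i},\delta_{i}')$ is, by the very construction of $\delta_{i}'$, the trivial extension of $(\Lambda_{i},\delta_{i}|_{\Lambda_{i}})$ with the natural $\Z$-grading sitting in the first coordinate, Corollary~\ref{cor:gradué} with $G=\Z$ yields $\Db{\gr{\Lambda_{1}}{\Z}}\simeq\Db{\gr{\Lambda_{2}}{\Z}}$, and $\delta_{i}'|_{\Lambda_{i}}=\delta_{i}|_{\Lambda_{i}}$ makes this precisely $\Db{\gr{\Lambda_{1},\delta_{1}}{\Z}}\simeq\Db{\gr{\Lambda_{2},\delta_{2}}{\Z}}$.

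The main obstacle, and the reason Lemma~\ref{lem:graded generalized Kauer moves} has to be set up first, is exactly this last normalization: the Kauer move genuinely interchanges the natural grading $d_{1}$ with the auxiliary grading $\delta_{2}$ (and $\delta_{1}$ with $d_{2}$), so the equivalence furnished by Theorem~\ref{thm:graded generalized Kauer moves} couples the natural grading on one side with an auxiliary one on the other; only the swap together with the two applications of Lemma~\ref{lem:graded generalized Kauer moves} — whose combined effect is the automorphism $(x,y)\mapsto(x+y,-y)$ already encountered in Corollary~\ref{cor:gradué iso} — restore invariance of $(1,0)$, after which Corollary~\ref{cor:gradué} applies as is. The remaining points — bijectivity and homogeneity-preservation of $T$, and that derived tensoring with the Kauer tilting object is compatible with the shift functors — are routine.
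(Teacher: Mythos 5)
Your proof is correct and follows essentially the same route as the paper: obtain $\delta_{1}$ by inverting the Kauer re-grading rule, apply Theorem~\ref{thm:graded generalized Kauer moves} with $G=\Z^{2}$ to the move $(\Gamma_{1},d_{1},\delta_{1})\mapsto(\Gamma_{2},\delta_{2},d_{2})$, renormalize via Lemma~\ref{lem:graded generalized Kauer moves}, and conclude with Corollary~\ref{cor:gradué}. Your version merely spells out two points the paper leaves implicit --- the invertibility and homogeneity-preservation of the re-grading map in (1), and the explicit composite automorphism $(x,y)\mapsto(x+y,-y)$ fixing $(1,0)$ that makes the swap plus the two applications of the lemma compatible with the natural shift --- both of which check out.
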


\medskip

\begin{proof}
    The first point is clear by definition of the grading in a $\Z$-graded generalized Kauer move. Let $B_{i}$ be the Brauer graph algebra associated to $\Gamma_{i}$. By definition, $(\Gamma_{2},\delta_{2},d_{2})$ is the $\Z^{2}$-graded generalized Kauer move of $(\Gamma_{1},d_{1},\delta_{1})$ over $H_{1}'$. Hence, by Theorem \ref{thm:graded generalized Kauer moves}, there is an equivalence of triangulated categories

    \[\per{\gr{B_{1},d_{1},\delta_{1}}{\Z^{2}}}\simeq \per{\gr{B_{2},d_{2},\delta_{2}}{\Z^{2}}}\]
    
    \noindent As seen before Lemma \ref{lem:graded generalized Kauer moves}, $\delta_{i}$ induces a $\Z$-grading on $\Lambda_{i}$. This $\Z$-grading gives rise to a $\Z$-grading on $B_{i}=\T{\Lambda_{i}}$ which will be denoted by $\delta_{i}'$. Then, the previous lemma leads to another triangle equivalence

    \[\per{\gr{B_{1},d_{1},\delta_{1}'}{\Z^{2}}}\simeq \per{\gr{B_{2},d_{2},\delta_{2}'}{\Z^{2}}}\]

    \noindent Hence, we obtain the second point thanks to Corollary \ref{cor:gradué}. 
\end{proof}

\medskip

\begin{ex} \label{ex:double graded generalized Kauer moves}
    Let us consider $(\Gamma,d)$ and $(\mu^{+}_{H'}(\Gamma),d_{H'})$ defined in Examples \ref{ex:graded generalized Kauer moves}. We consider the admissible cut $d':H\rightarrow\Z$ on $\mu^{+}_{H'}(\Gamma)$ defined by $d'(2^{+})=d'(1^{+})=d'(1^{-})=d(4^{+})=1$ and $d'(4^{-})=d'(3^{-})=d'(2^{-})=d'(3^{+})=0$. Thus, one can check that the 1-homogeneous $\Z$-grading $\delta:H\rightarrow\Z$ satisfying $\mu^{+}_{H'}(\Gamma,\delta)=(\mu^{+}_{H'}(\Gamma),d')$ is defined by $\delta(4^{-})=-1$, $\delta(2^{-})=\delta(3^{+})=\delta(1^{-})=\delta(2^{+})=\delta(4^{+})=1$ and $\delta(3^{-})=\delta(1^{+})=0$. Hence, the $\Z^{2}$-graded graphs $(\Gamma,d,\delta)$ and $(\mu^{+}_{H'}(\Gamma),d',d_{H'})$ are given by

    \smallskip

    \begin{figure}[H]
        \centering
        \begin{tikzpicture}
        \tikzstyle{vertex}=[circle,draw]
        \begin{scope}[xshift=-3.5cm]
        \node[vertex] (a) at (-1.5,0) {};
        \node[vertex] (b) at (1.5,0) {};
        \node[vertex] (c) at (3.5,1) {};
        \node[vertex] (d) at (3.5,-1) {};
        \draw[Red] (a) to[bend right=55] node[near start,below left, scale=0.75,black]{$(1,0)$} node[near end, below right, scale=0.75,black]{$(0,1)$} (b);
        \draw[VioletRed] (a) to[bend left=55] node[near start, above left, scale=0.75,black]{$(0,1)$} node[near end, above right, scale=0.75,black]{$(1,1)$} (b); 
        \draw[Orange] (b)--(c) node[near start, above, scale=0.75,black,yshift=1mm,xshift=-1mm]{$(0,0)$} node[near end, above, scale=0.75,black,yshift=1mm,xshift=-1mm]{$(1,1)$};
        \draw[Goldenrod] (b)--(d) node[near start, below, scale=0.75,black,yshift=-1mm,xshift=-1mm]{$(0,-1)$} node[near end, below, scale=0.75,black,yshift=-1mm,xshift=-1mm]{$(1,1)$};
        \draw (1,-2) node{$(\Gamma,d,\delta)$};
        \end{scope}
        \begin{scope}[xshift=3.5cm]
         \node[vertex] (a) at (-1.5,0) {};
        \node[vertex] (b) at (1,0) {};
        \node[vertex] (c) at (3.5,1) {};
        \node[vertex] (d) at (3.5,-1) {};
        \draw[Red] (b) to[bend right=50] node[near start,below left, scale=0.75,black]{$(1,2)$} node[near end, above, xshift=-2mm, yshift=1mm, scale=0.75,black]{$(1,0)$} (c);
        \draw[Orange] (b) to[bend left=50] node[near start, above left, scale=0.75,black]{$(0,0)$} node[near end, above right, scale=0.75,black]{$(0,0)$} (c); 
        \draw[VioletRed] (a)--(b) node[near start, above, scale=0.75,black]{$(1,1)$} node[near end, above, scale=0.75,black]{$(0,-1)$};
        \draw[Goldenrod] (c)--(d) node[near start, right, scale=0.75,black]{$(0,1)$} node[near end, right, scale=0.75,black]{$(1,1)$};
        \draw (1,-2) node{$(\mu^{+}_{H'}(\Gamma),d',d_{H'})$};
        \end{scope}
        \end{tikzpicture}
    \end{figure}
    
    \smallskip

    \noindent In particular, the quivers corresponding to $\Gamma$ and $\mu^{+}_{H'}(\Gamma)$ have the following $\Z^{2}$-grading

    \smallskip

    \begin{center}
         \begin{tikzcd}[column sep=2cm, row sep=2cm, scale=0.75]
            1 \arrow[cramped]{r}[above]{(0,1)} \arrow[cramped]{d}[description]{(1,0)} & 4 \arrow[cramped]{d}[right]{(0,-1)} \\
            2 \arrow[cramped, xshift=3.5mm]{u}[right]{(1,1)} \arrow[cramped, xshift=-3.5mm]{u}[left]{(0,1)}   & 3 \arrow[cramped]{l}[below]{(0,0)} \arrow[phantom]{l}[yshift=-7mm, scale=0.9]{(Q_{\Gamma}, d,\delta)}
        \end{tikzcd}
        \hspace{2cm}
         \begin{tikzcd}[column sep=2cm, row sep=2cm, scale=0.75]
            1 \arrow[cramped]{r}[above]{(1,0)} \arrow[cramped, xshift=-0.7mm, yshift=-0.7mm]{rd}[below left]{(1,2)}& 4 \arrow[cramped]{d}[right]{(0,1)} \\
            2 \arrow[cramped]{u}[left]{(0,-1)} & 3 \arrow[cramped, xshift=0.7mm, yshift=0.7mm]{ul}[above right]{(0,0)} \arrow[cramped]{l}[below]{(0,0)} \arrow[phantom]{l}[yshift=-7mm, scale=0.9]{(Q_{\mu^{+}_{H'}(\Gamma)}, d', d_{H'})}
        \end{tikzcd}
        
    \end{center}
    
    \smallskip    
    
    \noindent Let $\Lambda$ and $\Lambda'$ be the gentle algebras corresponding to $(\Gamma,d)$ and $(\mu^{+}_{H'}(\Gamma),d')$ respectively. By definition, they are respectively the degree zero part of the $\Z$-graded Brauer graph algebras associated to $(\Gamma,d)$ and $(\mu^{+}_{H'}(\Gamma),d')$. As seen before Lemma \ref{lem:graded generalized Kauer moves}, $\delta$ and $d_{H'}$ induce a $\Z$-grading on $\Lambda$ and $\Lambda'$ respectively. These $\Z$-gradings are given on the quivers by

    \smallskip

    \begin{figure}[H]
    \centering
    \begin{tikzcd}[column sep=2cm, row sep=2cm, scale=0.75]
        1 \arrow[cramped]{r}[above]{1} \arrow[phantom]{r}[near start, name=D]{} & 4 \arrow[cramped]{d}[right]{-1}  \\
        2 \arrow[cramped]{u}[left]{1} \arrow[phantom]{u}[near start, name=B]{}[near end, name=C]{}& 3 \arrow[cramped]{l}[below]{0}\arrow[phantom]{l}[near end, name=A]{}
        \arrow[phantom]{l}[yshift=-7mm, scale=0.9]{Q_{\Lambda}} \arrow[dash, densely dashed, cramped, bend right, from=A]{B} \arrow[dash, densely dashed, cramped, bend right, from=C]{D}
        \end{tikzcd}
    \hspace{2cm}
    \begin{tikzcd}[column sep=2cm, row sep=2cm, scale=0.75]
        1 &4 \arrow[cramped]{d}[right]{1} \\
        2 \arrow[cramped]{u}[left]{-1} \arrow[phantom]{r}[near end, name=A]{} &3 \arrow[cramped]{l}[below]{0} \arrow[phantom]{u}[near start, name=B]{} \arrow[cramped]{ul}[below left]{0} \arrow[phantom]{l}[yshift=-7mm, scale=0.9]{(Q_{\Lambda'},d_{H'})} \arrow[dash, densely dashed, cramped, to path={(A) to[bend right=50] (2.01,-2.01) to[bend right=50] (B)}]
    \end{tikzcd}
    \end{figure}
    
    \noindent Note that the algebras $\Lambda$ and $\Lambda'$ both have finite global dimension. Hence, there is a triangle equivalence thanks to Corollary \ref{cor:graded generalized Kauer moves}

    \[\Db{\gr{\Lambda,\delta}{\Z}}\simeq \Db{\gr{\Lambda',d_{H'}}{\Z}}\]
    
    \noindent However, note that the algebras $\Lambda$ and $\Lambda'$ are not derived equivalent. Indeed, one can check that the AG-invariant defined in \cite{AG} of the algebras $\Lambda$ and $\Lambda'$ are different.
\end{ex}

\medskip

\subsection{Triangular matrix algebra}

\medskip

In this part, we construct derived equivalent triangular matrix gentle algebras from their corresponding Brauer graph algebras. One can apply the results of the previous section to obtain the tilting object associated to this derived equivalence in the non-graded and in the graded case.  

\medskip

Let $\Gamma$ be a Brauer graph and $B$ be its corresponding Brauer graph algebra. Assume that there exist two idempotents $e^{+}$ and $e^{-}$ of $B$ such that

\smallskip

\begin{enumerate}[label=\textbullet, font=\tiny]
    \item $1_{B}=e^{+}+e^{-}$;
    \item there exist vertices $v_{1},\ldots,v_{n}$ in $\Gamma$ and arrows $\alpha_{i}:e^{-}\rightarrow e^{+}$ and $\beta_{i}:e^{+}\rightarrow e^{-}$ arising from the cyclic ordering at $v_{i}$ such that $B/<\alpha_{i},\beta_{i}>$ has two connected component, where $<\alpha_{i},\beta_{i}>$ is the ideal of $B$ generated by the $\alpha_{i}$ and $\beta_{i}$.
\end{enumerate}

\smallskip

\noindent In other words, the algebra $B$ admits the following decomposition

\smallskip

\begin{equation*}
    B=\begin{pmatrix}
        e^{+}Be^{+} & e^{+}Be^{-} \\[0.9em]
        e^{-}Be^{+} & e^{-}Be^{-}
    \end{pmatrix}
\end{equation*}

\smallskip

\noindent where $e^{+}Be^{-}$ is the $(e^{+}Be^{+})$\,-\,$(e^{-}Be^{-})$-bimodule generated by $\alpha_{1},\ldots\alpha_{n}$ and $e^{-}Be^{+}$ is the $(e^{-}Be^{-})$\,-\,$(e^{+}Be^{+})$-bimodule generated by $\beta_{1},\ldots,\beta_{n}$. Hence, $\Gamma$ is locally given by

\smallskip

    \begin{center}
    \begin{tikzpicture}[scale=0.9]
    \begin{scope}[yshift=3cm]
    \node[draw,circle, scale=0.7] (A) at (0,0) {$v_{1}$};
    \draw[Orange] (A) to (0:1.8); 
    \draw[Orange] (A)--(60:1.8);
    \draw[VioletRed] (A)--(120:1.8);
    \draw[VioletRed] (A)--(180:1.8);
    \draw[VioletRed] (A)--(240:1.8);
    \draw[Orange] (A)--(300:1.8);
    \draw[->] (65:1) to[bend right=45] node[above, scale=0.7]{$\alpha_{1}$} (115:1);
    \draw[->] (245:1) to[bend right=45] node[below, scale=0.7]{$\beta_{1}$} (295:1);
    \end{scope}
    \draw[dashed] (0,1)--(0,0);

    \begin{scope}[yshift=-2cm]
    \node[draw,circle, scale=0.7] (A) at (0,0) {$v_{n}$};
    \draw[Orange] (A) to (0:1.8); 
    \draw[Orange] (A)--(60:1.8);
    \draw[VioletRed] (A)--(120:1.8);
    \draw[VioletRed] (A)--(180:1.8);
    \draw[VioletRed] (A)--(240:1.8);
    \draw[Orange] (A)--(300:1.8);
    \draw[->] (65:1) to[bend right=45] node[above, scale=0.7]{$\alpha_{n}$} (115:1);
    \draw[->] (245:1) to[bend right=45] node[below, scale=0.7]{$\beta_{n}$} (295:1);
    \draw[|-|, VioletRed] (-1.8,-2)--(-0.2,-2) node[below, midway, scale=0.7]{$e^{+}$}; 
    \draw[|-|, Orange] (0.2,-2)--(1.8,-2) node[below, midway, scale=0.7]{$e^{-}$};
    \end{scope}
    \end{tikzpicture}
    \end{center}

\smallskip

\noindent Let us consider two admissible cuts $\Delta_{1}$ and $\Delta_{2}$ of the form $\Delta_{1}=\Delta\cup\{\alpha_{1}\ldots,\alpha_{n}\}$ and $\Delta_{2}=\Delta\cup\{\beta_{1},\ldots,\beta_{n}\}$ i.e. these admissible cuts only differ on the vertices $v_{1},\ldots, v_{n}$. In the following, $\Lambda_{i}:=B_{\Delta_{i}}$ denotes the gentle algebras corresponding to these admissible cuts.

\medskip

\begin{prop} \label{prop:tilting object for triangulated gentle algebras}
    Let $B$ be the Brauer graph algebra and $\Lambda_{i}$ be the gentle algebras defined as above. If $\Lambda_{1}$ has finite global dimension, then $T=e^{+}\Lambda_{2}e^{+}\,\oplus\, D(e^{-}\Lambda_{2}e^{-})[1]$ is a tilting object in $\per{\Lambda_{2}}$ whose endomorphism algebra is isomorphic to $\Lambda_{1}$. In particular, we have an equivalence of triangulated categories 
    
 \smallskip

    \begin{center}
        \begin{tikzcd}[column sep=1.5cm]
            \Db{\Lambda_{1}} \arrow[cramped]{r}[above]{-\underset{\Lambda_{1}}{\overset{\mbox{\tiny \bf{L}}}{\otimes}}T} &\Db{\Lambda_{2}}
        \end{tikzcd}
    \end{center}
\end{prop}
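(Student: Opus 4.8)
The plan is to reduce the statement to Proposition \ref{prop:tilting object for triangulated matrices} by showing that, for the admissible cuts $\Delta_1$ and $\Delta_2$ considered here, the gentle algebras $\Lambda_1=B_{\Delta_1}$ and $\Lambda_2=B_{\Delta_2}$ are exactly triangular matrix algebras of the shape appearing in Proposition \ref{prop:triangular matrices}. The only real content is the identification of the various corners and off-diagonal bimodules; once that is in place the conclusion is immediate.

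First I would record the basic structure. By Theorem \ref{thm:Schroll} one has $\T{\Lambda_1}\cong B\cong\T{\Lambda_2}$, the idempotents $e^{+},e^{-}$ of $B$ descend to idempotents of both $\Lambda_i$ (still denoted $e^{\pm}$), and $\Lambda_i$ is recovered inside $B$ as the degree-$0$ component of the $\Z$-grading attached to $\Delta_i$, with $D\Lambda_i$ the degree-$1$ component. Since by hypothesis $e^{+}Be^{-}$ is the $(e^{+}Be^{+})$-$(e^{-}Be^{-})$-bimodule generated by $\alpha_1,\dots,\alpha_n$ with $\{\alpha_1,\dots,\alpha_n\}\subseteq\Delta_1$, its image in the quotient $\Lambda_1=B/\langle I_\Gamma\cup\Delta_1\rangle$ vanishes, i.e.\ $e^{+}\Lambda_1 e^{-}=0$; symmetrically $e^{-}Be^{+}$ is generated by $\beta_1,\dots,\beta_n$ with $\{\beta_1,\dots,\beta_n\}\subseteq\Delta_2$, so $e^{-}\Lambda_2 e^{+}=0$. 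Hence
\[
\Lambda_1=\begin{pmatrix} e^{+}\Lambda_1 e^{+} & 0 \\[0.4em] e^{-}\Lambda_1 e^{+} & e^{-}\Lambda_1 e^{-}\end{pmatrix},\qquad
\Lambda_2=\begin{pmatrix} e^{+}\Lambda_2 e^{+} & e^{+}\Lambda_2 e^{-} \\[0.4em] 0 & e^{-}\Lambda_2 e^{-}\end{pmatrix}.
\]

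It then remains to match these data with the prescription of Proposition \ref{prop:triangular matrices}. Viewing $\Lambda_1$ and $\Lambda_2$ as the degree-$0$ subalgebras of $B$ for the gradings of $\Delta_1$ and $\Delta_2$ respectively, I would check that $e^{+}\Lambda_1 e^{+}=e^{+}\Lambda_2 e^{+}$ and $e^{-}\Lambda_1 e^{-}=e^{-}\Lambda_2 e^{-}$ as subalgebras of $B$: both consist of the classes of paths that remain in the $e^{+}$ (resp.\ $e^{-}$) part and use no arrow of $\Delta=\Delta_1\cap\Delta_2$, because any path in $e^{+}Be^{+}$ leaving the $e^{+}$ part must traverse some $\beta_j$ and some $\alpha_i$, which forces positive degree for both cuts. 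Finally, inside $\T{\Lambda_1}=B$ one has $e^{+}Be^{-}=e^{+}\Lambda_1 e^{-}\oplus e^{+}D\Lambda_1 e^{-}=D(e^{-}\Lambda_1 e^{+})$, while inside $\T{\Lambda_2}=B$ one has $e^{+}Be^{-}=e^{+}\Lambda_2 e^{-}\oplus e^{+}D\Lambda_2 e^{-}=e^{+}\Lambda_2 e^{-}$; comparing these two descriptions of the same space gives $e^{+}\Lambda_2 e^{-}\cong D(e^{-}\Lambda_1 e^{+})$ compatibly with the (now identified) corner rings. Thus $\Lambda_1$ and $\Lambda_2$ are precisely the triangular matrix algebras of Proposition \ref{prop:triangular matrices}; they are basic (being gentle over the algebraically closed field $k$), and $\Lambda_1$ has finite global dimension, so Proposition \ref{prop:tilting object for triangulated matrices} applies and yields that $T=e^{+}\Lambda_2 e^{+}\oplus D(e^{-}\Lambda_2 e^{-})[1]$ is a tilting object of $\per{\Lambda_2}$ with endomorphism algebra $\Lambda_1$, together with the asserted triangle equivalence.

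I expect the main obstacle to be the bookkeeping in the middle step: verifying that the two bimodule/subalgebra descriptions of $e^{+}Be^{-}$, $e^{+}Be^{+}$ and $e^{-}Be^{-}$ coming from the two distinct trivial-extension structures on $B$ are genuinely compatible, i.e.\ that $e^{+}\Lambda_2 e^{-}\cong D(e^{-}\Lambda_1 e^{+})$ holds as an isomorphism of bimodules over the identified corner rings and not merely of vector spaces. Everything else is a formal consequence of Theorem \ref{thm:Schroll} and Proposition \ref{prop:tilting object for triangulated matrices}.
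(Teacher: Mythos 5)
Your proposal is correct and follows essentially the same route as the paper: identify $\Lambda_{1}$ and $\Lambda_{2}$ with the triangular matrix algebras of Proposition \ref{prop:triangular matrices} (vanishing of $e^{+}\Lambda_{1}e^{-}$ and $e^{-}\Lambda_{2}e^{+}$ from the cuts, equality of the diagonal corners, and $e^{+}\Lambda_{2}e^{-}\cong D(e^{-}\Lambda_{1}e^{+})$), then invoke Proposition \ref{prop:tilting object for triangulated matrices}. Your comparison of the two decompositions of $e^{+}Be^{-}$ is in fact a cleaner justification of the bimodule isomorphism than the paper's brief ``one can check''.
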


\medskip

\begin{proof}
By definition of the admissible cuts of $B$, the gentle algebras $\Lambda_{1}$ and $\Lambda_{2}$ have a decomposition of the form

\smallskip

\begin{equation*}
    \Lambda_{1}=\begin{pmatrix}
        e^{+}\Lambda_{1}e^{+} & 0 \\[0.9em]
        e^{-}\Lambda_{1}e^{+} & e^{-}\Lambda_{1}e^{-}
    \end{pmatrix} \qquad \mbox{and} \qquad
    \Lambda_{2}=\begin{pmatrix}
        e^{+}\Lambda_{2}e^{+} & e^{+}\Lambda_{2}e^{-} \\[0.9em]
        0 & e^{-}\Lambda_{2}e^{-}
    \end{pmatrix}
\end{equation*}

\smallskip

\noindent Note that $e^{+}\Lambda_{1}e^{+}=e^{+}\Lambda_{2}e^{+}$ and $e^{-}\Lambda_{1}e^{-}=e^{-}\Lambda_{2}e^{-}$ by definition of the admissible cuts $\Delta_{i}$. Moreover, one can check that $e^{+}\Lambda_{2}e^{-}=D(e^{-}\Lambda_{1}e^{+})$ since $\alpha_{i}\in D(e^{-}\Lambda_{1}e^{+})$ and $\beta_{i}\in D(e^{+}\Lambda_{2}e^{-})$. Hence, we conclude using Proposition \ref{prop:tilting object for triangulated matrices}. 
\end{proof}

\medskip

\begin{rem} \label{rem:geometric model}
    Using the geometric model of the bounded derived category of a gentle algebra introduced in \cite{OPS}, one can reformulate the previous proposition as follows. Let $\Lambda_{1}$ and $\Lambda_{2}$ be two gentle algebras with the same ribbon graph $\Gamma$. The ribbon surface $S$ associated to $\Gamma$ locally looks like

    \smallskip

    \begin{center}
    \begin{tikzpicture}[scale=0.8]
    \draw (-2,2) arc (-120:-60:4);
    \draw (-2.5,1.5) arc (30:-30:2);
    \draw (2.5,1.5) arc (150:210:2);
    \draw (2,-1) arc (60:120:4);
    \node[draw, circle, ForestGreen, scale=0.6] (A) at (0,0.5) {$v_{i}$};
    \node (B) at (-2.5,2) {};
    \node (C) at (-2.5,-1) {};
    \node (D) at (2.5,2) {};
    \node (E) at (2.5,-1) {};
    \draw[color=ForestGreen] (A) to (B);
    \draw[color=ForestGreen] (A) to (C);
    \draw[color=ForestGreen] (A) to (D);
    \draw[color=ForestGreen] (A) to (E);
    \end{tikzpicture}
    \end{center}

    \smallskip

    \noindent Assume that there exist $v_{1},\ldots,v_{n}$ vertices of $\Gamma$ such that 

    \begin{enumerate}[label=\textbullet, font=\tiny]
        \item  for any vertex $v$ in $\Gamma$ different from the $v_{i}$, the embedding of $\Gamma$ into the marked ribbon surface of $\Lambda_{i}$ is given locally around $v$ by

    \smallskip
    
    \begin{center}
    \begin{tikzpicture}[scale=0.75]
    \begin{scope}[xshift=-4cm]
    \draw (-2,2) arc (-120:-60:4) node[pos=0.5] (A) {};
    \node[draw, circle, ForestGreen, scale=0.75, fill=white] (A') at (A) {$v$};
    \draw (-2.5,1.5) arc (30:-30:2);
    \draw (2.5,1.5) arc (150:210:2);
    \draw (2,-1) arc (60:120:4);
    \node (B) at (-2.5,2) {};
    \node (C) at (-2.5,-1) {};
    \node (D) at (2.5,2) {};
    \node (E) at (2.5,-1) {};
    \draw[color=ForestGreen] (A') to[bend left] (B);
    \draw[color=ForestGreen] (A') to (C);
    \draw[color=ForestGreen] (A') to[bend right] (D);
    \draw[color=ForestGreen] (A') to (E);
    \draw[|-|] (-2.5,-1.5)--++(2.4,0) node[below, midway]{$e^{+}$};
    \draw[|-|] (2.5,-1.5)--++(-2.4,0) node[below, midway]{$e^{-}$};
    \draw (A')+(0,-4.5) node{$S_{\Lambda_{1}}$};
    \end{scope}
    \begin{scope}[xshift=4cm]
    \draw (-2,2) arc (-120:-60:4) node[pos=0.5] (A) {};
    \node[draw, circle, ForestGreen, scale=0.75, fill=white] (A') at (A) {$v$};
    \draw (-2.5,1.5) arc (30:-30:2);
    \draw (2.5,1.5) arc (150:210:2);
    \draw (2,-1) arc (60:120:4);
    \node (B) at (-2.5,2) {};
    \node (C) at (-2.5,-1) {};
    \node (D) at (2.5,2) {};
    \node (E) at (2.5,-1) {};
    \draw[color=ForestGreen] (A') to[bend left] (B);
    \draw[color=ForestGreen] (A') to (C);
    \draw[color=ForestGreen] (A') to[bend right] (D);
    \draw[color=ForestGreen] (A') to (E);
    \draw[|-|] (-2.5,-1.5)--++(2.4,0) node[below, midway]{$e^{+}$};
    \draw[|-|] (2.5,-1.5)--++(-2.4,0) node[below, midway]{$e^{-}$};
    \draw (A')+(0,-4.5) node{$S_{\Lambda_{2}}$};
    \end{scope}
    \end{tikzpicture}
    \end{center}

    \smallskip

    \item the embedding of $\Gamma$ into the marked ribbon surface of $\Lambda_{i}$ is given locally around $v_{i}$ by

    \smallskip
    
    \begin{center}
    \begin{tikzpicture}[scale=0.75]
    \begin{scope}[xshift=-4cm]
    \draw (-2,2) arc (-120:-60:4) node[pos=0.5] (A) {};
    \node[draw, circle, ForestGreen, scale=0.65, fill=white] (A') at (A) {$v_{i}$};
    \draw (-2.5,1.5) arc (30:-30:2);
    \draw (2.5,1.5) arc (150:210:2);
    \draw (2,-1) arc (60:120:4) ;
    \node (B) at (-2.5,2) {};
    \node (C) at (-2.5,-1) {};
    \node (D) at (2.5,2) {};
    \node (E) at (2.5,-1) {};
    \draw[color=ForestGreen] (A') to[bend left] (B);
    \draw[color=ForestGreen] (A') to (C);
    \draw[color=ForestGreen] (A') to[bend right] (D);
    \draw[color=ForestGreen] (A') to (E);
    \draw[|-|] (-2.5,-1.5)--++(2.4,0) node[below, midway]{$e^{+}$};
    \draw[|-|] (2.5,-1.5)--++(-2.4,0) node[below, midway]{$e^{-}$};
    \draw (A')+(0,-4.5) node{$S_{\Lambda_{1}}$};
    \end{scope}
    \begin{scope}[xshift=4cm]
    \draw (-2,2) arc (-120:-60:4);
    \draw (-2.5,1.5) arc (30:-30:2);
    \draw (2.5,1.5) arc (150:210:2);
    \draw (2,-1) arc (60:120:4)  node[pos=0.5] (A) {};
    \node[draw, circle, ForestGreen, scale=0.65, fill=white] (A') at (A) {$v_{i}$};
    \node (B) at (-2.5,2) {};
    \node (C) at (-2.5,-1) {};
    \node (D) at (2.5,2) {};
    \node (E) at (2.5,-1) {};
    \draw[color=ForestGreen] (A') to (B);
    \draw[color=ForestGreen] (A') to[bend right] (C);
    \draw[color=ForestGreen] (A') to (D);
    \draw[color=ForestGreen] (A') to[bend left] (E);
    \draw[|-|] (-2.5,-1.5)--++(2.4,0) node[below, midway]{$e^{+}$};
    \draw[|-|] (2.5,-1.5)--++(-2.4,0) node[below, midway]{$e^{-}$};
    \draw (A')+(0,-2.5) node{$S_{\Lambda_{2}}$};
    \end{scope}
    \end{tikzpicture}
    \end{center}

    \end{enumerate}

    \smallskip

    \noindent For all $v_{i}$, we define a curve $\delta_{i}$ that connects the boundary component containing $v_{i}$ in the marked ribbon surface of $\Lambda_{1}$ to the one containing $v_{i}$ in the marked ribbon surface of $\Lambda_{2}$.

    \smallskip

    \begin{center}
        \begin{tikzpicture}[scale=0.8]
    \draw (-2,2) arc (-120:-60:4) node[pos=0.5] (F) {};
    \draw (-2.5,1.5) arc (30:-30:2);
    \draw (2.5,1.5) arc (150:210:2);
    \draw (2,-1) arc (60:120:4) node[pos=0.5] (G) {};
    \node[draw, circle, ForestGreen, scale=0.6, opacity=0.4] (A) at (0,0.5) {$v_{i}$};
    \node (B) at (-2.5,2) {};
    \node (C) at (-2.5,-1) {};
    \node (D) at (2.5,2) {};
    \node (E) at (2.5,-1) {};
    \draw[color=ForestGreen, opacity=0.4] (A) to (B);
    \draw[color=ForestGreen, opacity=0.4] (A) to (C);
    \draw[color=ForestGreen, opacity=0.4] (A) to (D);
    \draw[color=ForestGreen, opacity=0.4] (A) to (E);
    \draw[RoyalBlue, thick, dashed] (F.center)--(G.center) node[near start, left, scale=0.75]{$\delta_{i}$}; 
    \end{tikzpicture}
    \end{center}

    \smallskip

    \noindent Assume that the union of the $\delta_{i}$ cuts the ribbon surface $S$ into two subsurfaces. If the marked ribbon surface of $\Lambda_{1}$ has no puncture, then the gentle algebras $\Lambda_{1}$ and $\Lambda_{2}$ are derived equivalent.
\end{rem}

\medskip

Considering the indices modulo 2, we have seen that $\Delta_{i+1}$ induces a $\Z$-grading on $\Lambda_{i}$ which extends onto a $\Z$-grading on $\mathrm{Triv}(\Lambda_{i})$. Hence, one can obtain a $\Z^{2}$-grading on $\mathrm{Triv}(\Lambda_{i})$ whose first component is the natural $\Z$-grading of the trivial extension. 

\medskip

\begin{prop} \label{prop:tilting object for graded triangulated gentle algebra}
    Let $B$ be the Brauer graph algebras and $\Lambda_{i}$ be the gentle algebras defined as before Proposition \ref{prop:tilting object for triangulated gentle algebras}. Assume that $\Lambda_{1}$ has finite global dimension. We denote by $\tau_{i}$ the Auslander-Reiten translation in $\Db{\gr{\Lambda_{i}}{\Z}}$. Then, 

\smallskip

\[{\mathscr{T}}=\bigoplus_{n\in\Z} \ (\tau_{2})^{n}T[2n](-n) \qquad \mbox{with} \qquad T=e^{+}\Lambda_{2}e^{+}\oplus \tau_{2}^{-1}D(e^{-}\Lambda_{2}e^{-})[-1]\]

\smallskip

\noindent is a tilting object in $\per{\gr{\Lambda_{2}}{\Z}}$ satisfying that for all $n\in\Z$, there is an isomorphism of $\Z$-graded $\Lambda_{1}$-modules

\[\bigoplus_{m\in\Z}\Hom[\per{\gr{\Lambda_{2}}{\Z}}]{(\tau_{2})^{m}T[2m](-m)}{(\tau_{2})^{n}T[2n](-n)}\simeq \Lambda_{1}(n)\]

\noindent In particular, we have an equivalence of triangulated categories 

\smallskip

\begin{center}
    \begin{tikzcd}[column sep=1.5cm]
        \Db{\gr{\Lambda_{1}}{\Z}} \arrow[cramped]{r}[above]{-\underset{\mathrm{gr}(\Lambda_{1})}{\overset{\mbox{\tiny \bf{L}}}{\otimes}}\mathscr{T}} &\Db{\gr{\Lambda_{2}}{\Z}}
    \end{tikzcd}
\end{center}

\smallskip

\noindent where $-\otimes_{\mathrm{gr}(\Lambda_{1})}{\mathscr{T}}:\mathrm{K^{b}}(\gr{\Lambda_{1}}{\Z})\rightarrow \mathrm{K^{b}}(\gr{\Lambda_{2}}{\Z})$ is the graded tensor product between the bounded homotopy categories of $\gr{\Lambda_{i}}{\Z}$ as defined in \cite{ZH}.

\end{prop}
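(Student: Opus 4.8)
The plan is to reduce the statement to the graded triangular–matrix case already settled, namely Proposition \ref{prop:tilting object for graded triangular matrices}. First I would record, exactly as in the proof of Proposition \ref{prop:tilting object for triangulated gentle algebras}, that the choices $\Delta_1=\Delta\cup\{\alpha_1,\dots,\alpha_n\}$ and $\Delta_2=\Delta\cup\{\beta_1,\dots,\beta_n\}$ force the gentle algebras $\Lambda_i=B_{\Delta_i}$ to be triangular,
\[
\Lambda_1=\begin{pmatrix} e^{+}\Lambda_1 e^{+} & 0 \\ e^{-}\Lambda_1 e^{+} & e^{-}\Lambda_1 e^{-}\end{pmatrix},\qquad
\Lambda_2=\begin{pmatrix} e^{+}\Lambda_2 e^{+} & e^{+}\Lambda_2 e^{-} \\ 0 & e^{-}\Lambda_2 e^{-}\end{pmatrix},
\]
with $e^{+}\Lambda_1 e^{+}=e^{+}\Lambda_2 e^{+}$, $e^{-}\Lambda_1 e^{-}=e^{-}\Lambda_2 e^{-}$ and $e^{+}\Lambda_2 e^{-}=D(e^{-}\Lambda_1 e^{+})$. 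The key elementary input is that $B/\langle\alpha_i,\beta_i\rangle$ is disconnected, so a nonzero path of $\Lambda_1$ joining the $e^{+}$-side to the $e^{-}$-side must cross the cut set $\{\alpha_i,\beta_i\}$, and since in $\Lambda_1$ all $\alpha_i$ vanish and triangularity forbids returning, it crosses through exactly one $\beta_i$; symmetrically in $\Lambda_2$ with the $\alpha_i$. In particular $\Lambda_1$ and $\Lambda_2$ are basic and of the form of Proposition \ref{prop:triangular matrices}.

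Second, I would match the $\Z$-grading on $\Lambda_i$ induced by $\Delta_{i+1}$ with the grading $d_i$ defined just before Proposition \ref{prop:tilting object for graded triangular matrices}. The arrows of $\Lambda_1=B_{\Delta_1}$ that lie in $\Delta_2$ are precisely the $\beta_j$, hence the degree-$1$ arrows of $\Lambda_1$ are the $\beta_j$, and they generate the bimodule $e^{-}\Lambda_1 e^{+}$; by the single-crossing remark $e^{-}\Lambda_1 e^{+}$ is homogeneous of degree $1$ while $e^{+}\Lambda_1 e^{+}\oplus e^{-}\Lambda_1 e^{-}$ lies in degree $0$, which is exactly the grading $d_1$. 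The symmetric computation identifies the grading on $\Lambda_2$ induced by $\Delta_1$ with $d_2$, its degree-$1$ component being $e^{+}\Lambda_2 e^{-}$, generated by the $\alpha_j$. Thus $(\Lambda_1,\Delta_2)$ and $(\Lambda_2,\Delta_1)$ are exactly the $\Z$-graded triangular matrix algebras $(\Lambda_1,d_1)$ and $(\Lambda_2,d_2)$ to which Proposition \ref{prop:tilting object for graded triangular matrices} applies.

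Finally, since $\Lambda_1$ has finite global dimension by hypothesis, Proposition \ref{prop:tilting object for graded triangular matrices} applies directly and yields that $\mathscr{T}=\bigoplus_{n\in\Z}(\tau_2)^{n}T[2n](-n)$ is a tilting object in $\per{\gr{\Lambda_2}{\Z}}$ with the asserted graded $\Lambda_1$-module structure on the total $\mathrm{Hom}$, together with the triangle equivalence $\Db{\gr{\Lambda_1}{\Z}}\simeq\Db{\gr{\Lambda_2}{\Z}}$ realized by the graded tensor product with $\mathscr{T}$. I expect the only real work to be the bookkeeping in the second step, i.e. verifying on the nose that the degree-$0$/degree-$1$ decomposition of $\Lambda_i$ coming from the admissible cut $\Delta_{i+1}$ agrees with the formula for $d_i$; this ultimately rests on the single-crossing property of paths relative to $\{\alpha_i,\beta_i\}$, and once it is in hand the proposition is a direct specialization of the triangular matrix case.
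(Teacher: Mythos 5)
Your reduction is correct and is exactly the route the paper intends: the paper states this proposition without proof, but the paragraph preceding it (setting up the $\Z$-grading on $\Lambda_i$ induced by $\Delta_{i+1}$) together with the proof of Proposition \ref{prop:tilting object for triangulated gentle algebras} makes clear that the argument is the identification of $(\Lambda_1,\Delta_2)$ and $(\Lambda_2,\Delta_1)$ with the graded triangular matrix algebras $(\Lambda_1,d_1)$, $(\Lambda_2,d_2)$ followed by an appeal to Proposition \ref{prop:tilting object for graded triangular matrices}. Your single-crossing verification that the $\Delta_{i+1}$-grading agrees with $d_i$ is the right bookkeeping and fills in the step the paper leaves implicit.
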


\medskip

\phantomsection

\setlength{\bibitemsep}{0pt}
\setlength{\biblabelsep}{8pt}
\defbibnote{nom}{\\ \small{\scshape{Université Grenoble Alpes, CNRS, Institut Fourier, 38610 Gières}} \\ \textit{E-mail address :} \href{mailto:valentine.soto@univ-grenoble-alpes.fr}{valentine.soto@univ-grenoble-alpes.fr}}
\printbibliography[postnote=nom, heading=bibintoc,title={References}]

\end{document}